\documentclass[11pt,a4paper]{article}
\usepackage[T1]{fontenc}
\usepackage[utf8]{inputenc}
\usepackage{lmodern,amsmath,amssymb,amsthm,mathrsfs,microtype}
\usepackage[margin=28mm]{geometry}
\usepackage[unicode,hidelinks]{hyperref}
\usepackage{bookmark}
\usepackage{tikz}
\usetikzlibrary{arrows.meta,decorations.markings}
\hypersetup{pdftitle={Uniqueness of tangent cones with immersed special Legendrian links}}
\numberwithin{equation}{section}
\newtheorem{theorem}{Theorem}[section]
\newtheorem{proposition}[theorem]{Proposition}
\newtheorem{lemma}[theorem]{Lemma}

\theoremstyle{definition}
\newtheorem{definition}[theorem]{Definition}
\theoremstyle{remark}

\newtheorem*{remark*}{Remark}
\newtheorem{example}[theorem]{Example}
\newtheorem*{example*}{Example}
\newtheorem*{notation}{Notation}
\newcommand{\R}{\mathbb R}
\newcommand{\C}{\mathbb C}
\newcommand{\dist}{\operatorname{dist}}
\newcommand{\Ree}{\operatorname{Re}}
\newcommand{\dd}{\,d}
\allowdisplaybreaks[2]
\title{Uniqueness of tangent cones\\with immersed special Legendrian links}
\author{Yang Li}
\date{\today}
\begin{document}
	\maketitle
	
	\begin{abstract}
		Under the assumption of integrability of the smooth Jacobi fields
		and a condition on the intersection pattern of the link, we prove uniqueness of
		special Lagrangian tangent cones in $\C^3$ whose links are compact
		immersed special Legendrian surfaces with transverse double points.
		The proof is based on a discrete \L{}ojasiewicz type inequality, which
		comes from a gluing obstruction mechanism.
	\end{abstract}
	
	\tableofcontents
	
	\section{Introduction}\label{sect:introduction}
	
	A fundamental problem in minimal surface theory is to prove uniqueness
	of the tangent cone at a singular point of an area-minimizing current.
	Three basic approaches are as follows.
	\begin{enumerate}
		\item \emph{Allard--Almgren \cite{AllardAlmgren}.}
		Suppose that one tangent cone has multiplicity one and a smooth compact
		link, and assume as an additional hypothesis that every Jacobi field of
		the link is integrable through minimal submanifolds of the sphere.
		On fixed annuli, regularity gives strong convergence of the normalized
		normal graphs to a Jacobi field. Integrability provides a nearby minimal
		cone corresponding to its degree-one part. Relative to this comparison
		cone, estimates for the remaining components on successive annuli give
		uniqueness of the tangent cone and convergence at a polynomial rate.
		
		\item \emph{Simon's \L{}ojasiewicz method \cite{Simon83}.}
		For a multiplicity-one tangent cone with a smooth compact link, analyticity
		of the area functional and an infinite-dimensional gradient inequality
		control the accumulated change under rescaling. This gives uniqueness
		without the Allard--Almgren integrability hypothesis, though the general
		rate need not be polynomial.
		
		\item \emph{Simon's cylindrical-cone method \cite{Simon93,Simon94}.}
		For a multiplicity-one cylindrical tangent cone $C\times\R^k$ in
		codimension one, the link is singular along the spine. Simon proves
		uniqueness and polynomial convergence when $C$ is smooth away from the
		vertex, strictly stable and strictly minimizing, and satisfies his
		conditions on homogeneous Jacobi fields: the fields of degrees zero and one
		are generated by translations and rotations, and certain negative
		integral homogeneities are absent. Nonconcentration near the spine then
		makes Jacobi-field approximation and a three-annulus argument effective.
	\end{enumerate}
	
	Sz\'ekelyhidi proves uniqueness for the nonintegrable cylinder
	$C(S^3\times S^3)\times\R$ \cite{Sze20}. In place of nearby minimal
	cones, he constructs minimal comparison hypersurfaces on large annuli
	as logarithmic perturbations of cones. His nonconcentration estimate,
	measured in an adapted $L^\infty$ distance, bounds the error relative to
	a comparison surface on an interior annulus by the graphical error away
	from the singular axis, plus an arbitrarily small multiple of the error
	on the larger annulus. Related estimates enter his strong unique
	continuation theorem at cylindrical tangent cones \cite{Sze21}.
	Collins--Li adapt the strategy to exact special
	Lagrangians: if a multiplicity-one tangent cone is
	$\R^k\times\mathbf C$, where $\dim\mathbf C>2$, the link of
	$\mathbf C$ is smooth and connected, and $\mathbf C$ satisfies their
	integrability hypothesis, then the tangent cone is unique \cite{CL}.
	Their argument uses local potentials together with nonconcentration and
	decay estimates.
	
	We consider a closed integral three-current $L$ of multiplicity one in
	$\C^3$, which is \emph{special Lagrangian} with respect to the standard Euclidean structure
	$(J,\omega,\Omega)$, namely
	\[
	\omega|_L=0,\qquad \operatorname{Im}\Omega|_L=0.
	\]
	By an observation of Harvey and Lawson
	\cite[Theorem~II.4.2]{HL}, the fact that $L$ is calibrated by
	$\Ree\Omega$ implies that $L$ is locally volume minimizing.
	We study special Lagrangians whose
	tangent cones have immersed surface links with transverse double points.
	The simplest example is the union of two 3-planes meeting along a line.
	
	Our main contribution is a new mechanism which proves the uniqueness of the
	tangent cone even when $L^2$ concentration occurs. The normalized height
	may concentrate near the rays through the double points of the link,
	and hence disappear from the Jacobi field limit on regular regions.
	Instead, we turn a gluing obstruction into a discrete \L{}ojasiewicz inequality.
	This controls the drift of the best approximating cone over scales.
	
	We identify an immersed Legendrian with its image unless its domain is
	needed. Let $\Lambda\subset S^5$ be a compact oriented
	smooth immersed special Legendrian surface of phase zero, and let
	$F:\Sigma\to\Lambda$ denote its parametrization. The domain $\Sigma$
	may be disconnected even when its image $\Lambda$ is connected. We allow
	finitely many double points,
	transverse in the contact distribution, and no higher intersections.
	Write $\mathbf C_0=\mathbf C(\Lambda)=\{Rp:R\ge0,\ p\in\Lambda\}$
	for its special Lagrangian cone.
	
	Write $\Sigma=\coprod_{v\in V}\Sigma_v$. The \emph{intersection graph}
	$G(\Lambda)=(V,E)$ has one vertex for each connected component $\Sigma_v$
	and one unoriented edge for each double point, joining the vertices
	corresponding to the components containing its two preimages; loops and
	parallel edges are allowed.
	
	\begin{definition}\label{def:graph-condition}
		We say that $\Lambda$ satisfies the \emph{intersection condition} if every edge
		of $G(\Lambda)$ belongs to at most one simple cycle.
		(Equivalently, each connected component of $G(\Lambda)$ is a \emph{cactus graph}.
		A \emph{simple cycle} is a closed path with no repeated edges and no repeated
		vertices except its starting and ending vertex.
		Loops and pairs of parallel edges are regarded as simple cycles of length one
		and two, respectively.)
	\end{definition}
	
	We assume that $\Lambda$ is \emph{Jacobi-integrable}:
	there are a small finite-dimensional parameter neighbourhood $\mathcal M$
	of $0$ and a family $\{\Lambda_s\}_{s\in\mathcal M}$ of special
	Legendrian immersions, with $\Lambda_0=\Lambda$, whose infinitesimal normal
	variations span the smooth Jacobi fields on $\Sigma$. Choose smooth
	parametrizations $F_s:\Sigma\to\Lambda_s$, and write
	$\mathbf C_s=\mathbf C(\Lambda_s)$. The family need not preserve the
	double points; no matching condition is imposed on their two preimages.
	
	Our main result is
	\begin{theorem}\label{thm:target}
		Under these assumptions, suppose $L$ has $\mathbf C_0$ as a tangent cone
		at the origin and $\Lambda$ satisfies the intersection condition in
		Definition~\ref{def:graph-condition}. Then $\mathbf C_0$ is the unique tangent
		cone of $L$ at the origin.
	\end{theorem}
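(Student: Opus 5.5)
The plan is to run an Allard--Almgren style argument over dyadic scales, with the Jacobi-field step replaced by a discrete \L{}ojasiewicz inequality that tolerates concentration near the rays through the double points. Work on annuli $A_k=\{2^{-k-1}\le|x|\le 2^{-k}\}$ and rescale $L_k=2^kL$. For each $k$, let $s_k\in\mathcal M$ (together with a rotation in $SU(3)$, absorbed into the family) minimize an adapted distance $D_k=\dist(L_k\cap A_0,\mathbf C_{s}\cap A_0)$. This distance is $L^2$ away from the singular rays and weighted near them; the small-distance regime comes from $\mathbf C_0$ being a tangent cone together with Allard regularity on the smooth part. The goal is two estimates:
\begin{equation*}
|s_{k+1}-s_k|\lesssim D_k,\qquad \sum_{k\ge k_0}D_k<\infty \quad\text{whenever } D_{k_0}\ \text{is small}.
\end{equation*}
The first follows from comparing consecutive best approximations on overlapping annuli. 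The second yields convergence of $s_k$ to some $s_\infty$. A continuity and bootstrap argument then keeps all $D_k$ small, so the tangent cone is unique and equals $\mathbf C_{s_\infty}$. This cone must be $\mathbf C_0$, since $\mathbf C_0$ is a subsequential limit.

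To get summability, I will prove a three-annulus type dichotomy: either $D_{k+1}\le\frac12 D_k$ (decay), or $D_k\le C(D_{k-1}-D_{k+1})^{\theta}$-type control holds, which is a discrete \L{}ojasiewicz inequality. Either alternative sums by the standard telescoping argument. Away from the double-point rays, the normalized difference between $L_k$ and $\mathbf C_{s_k}$ converges to a Jacobi field on the regular part of $\mathbf C_{s_k}$. Homogeneous Jacobi fields of degree one are killed by Jacobi-integrability and the optimality of $s_k$, while the other degrees give growth or decay across annuli, as in Allard--Almgren. The new case is when the normalized height concentrates near the rays through the double points and the regular-region limit vanishes. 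Near such a ray the cone looks like a pair of transverse special Lagrangian $3$-planes meeting along a line, times a scaling. In this regime $L_k$ should be modelled on a Lawlor-neck type desingularization $\R\times$(holomorphic curve / Lawlor neck) along the ray, with a small neck parameter $\epsilon$ that may vary slowly along the ray.

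The key gluing obstruction is as follows. A desingularization at a double point with neck size $\epsilon$ produces, on the regular part, a harmonic (Jacobi) field with a singular flux term of size $\epsilon$ at the two preimages. Solving for a global exact correction requires the fluxes to balance on each component $\Sigma_v$, through a Lagrangian-flux / cohomology condition. This is where the intersection graph enters. The flux conditions form a linear system on the edges of $G(\Lambda)$, namely a divergence condition at each vertex. Its kernel is spanned by cycles, and the cactus condition makes independent cycles edge-disjoint. My plan is to show that on a cactus the balancing forces the cycle parameters to be determined by a sign-alternation constraint along each cycle that persists only at quadratic or higher order. The upshot is that the obstruction map, from neck parameters to the cokernel, has a nondegenerate leading term of some finite order $m$. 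That gives $|\epsilon|^{m}\lesssim$ (regular-region error), and converting this yields the inequality $D_k\le C\,(\text{change of }D\text{ across scales})^{\theta}$ with $\theta=1/m$.

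I expect the hardest step to be making the concentrated regime rigorous. That requires a nonconcentration/neck-detection estimate showing that if the regular Jacobi limit vanishes, then $L_k$ near each ray is $C^1$-close to a neck model with well-defined parameter, with error controlled by the regular error. I would attempt this via the local potential method of Collins--Li along the line singularity $\R\times(\Pi_1\cup\Pi_2)$, whose uniqueness theory is available, combined with a slicing argument in the ray direction. If parameters vary along the ray, I would average them and use the $\R$-translation Jacobi fields to control the variation.
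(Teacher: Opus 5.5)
There are two genuine gaps. First, your summation scheme has no monotone quantity. The second alternative of your dichotomy, $D_k\le C(D_{k-1}-D_{k+1})^\theta$, is not meaningful: $D$ is not monotone, so the right-hand side may be negative. Even for a decreasing sequence, $D_k^{1/\theta}\le C(D_{k-1}-D_{k+1})$ only gives $D_k=O(k^{-\theta/(1-\theta)})$. This is not summable once $\theta\le1/2$, e.g.\ for your $\theta=1/m$ with $m\ge2$. For the same reason, bounding the drift by $|s_{k+1}-s_k|\lesssim D_k$ is too crude.

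The paper instead uses the density excess $m(R)=R^{-3}\|L\|(B_R)-\Theta_0$, which is monotone, together with $E(R)=m(4R)-m(R/4)\ge0$. The drift of the height from the cone is bounded by $\sqrt E$, not by $D$, via the Poincar\'e estimate $|H(t)-H(R)|\le C\sqrt{m(8R)-m(t/8)}$. The paper then proves $m\le C(D^2+E)$, which is nontrivial here and uses a Lipschitz retraction onto $\Lambda_s$ together with the short Reeb segments joining its separated sheets. It also proves $E\ge C^{-1}D^{2q}$ with $q<2$ strictly. Then $m^q\le CE$, and concavity of $u\mapsto u^{1-q/2}$ gives $\sum_j\sqrt{E(R_j)}\le Cm(4R_0)^{1-q/2}$.

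Getting $q<2$ needs exponent bookkeeping that your plan lacks. The three-annulus estimate with rate $3<\gamma<4$ gives a neck radius $\rho\ge C^{-1}D^{2/\gamma}$, and the obstruction must then give $E\ge\rho^{q_0}$ with $q_0<2\gamma<8$. An obstruction of unspecified ``order $m$'' gives no such control.

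Second, flux balance does not by itself obstruct anything, and you conflate it with exactness. On a cactus, vertex balance forces the neck coefficients to be nearly constant around each cycle and negligible on bridges. It leaves the cycle coefficients free: in the two-plane case, equal coefficients at the two antipodal necks are exactly balanced. So no ``sign-alternation'' constraint kills them.

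The obstruction comes from the periods of the Liouville form $\lambda=\iota_x\omega$. Since $|\lambda|=|x^\perp|$ on $L$, the period along a loop in a radial slice crossing the necks of a cycle $\gamma_j$ is at most $C\rho_*^{-1/2}\sqrt E$. Computing it through the necks gives $\sum_e\xi_e(\gamma_j)\{\beta_e(s)+2a_e\log|a_e|\}$ up to $O(\varepsilon\rho^2\log(1/\rho))$. Here $\beta_e(s)$ is the action of the short Reeb chord by which the sheets of the best approximating $\Lambda_s$ miss each other. Since the family need not preserve double points, these actions could a priori cancel the neck terms.

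With $t_j$ the common coefficient on $\gamma_j$, one gets $\sum_jt_jg_j(s)\approx2\sum_j\#\gamma_j\,t_j^2\log(1/|t_j|)$. This is a sum of positive terms of total size $\rho^4\log(1/\rho)$. Cancellation is excluded by the \L{}ojasiewicz gradient inequality for the analytic chord-action sums $g_j(s)=\sum_{e\in\gamma_j}\xi_e(\gamma_j)\beta_e(s)$. Indeed $d\beta_e[\dot s]=2\{h(p_{e,+})-h(p_{e,-})\}$, so flux balance tested against Jacobi functions bounds $\|\sum_jt_j\,dg_j(s)\|$ by $C(\rho^4\log(1/\rho)+\sqrt E)$.

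Finally, Collins--Li does not cover your neck regime. $\R\times(\Pi_1\cup\Pi_2)$ has a two-dimensional cross-section with disconnected link, which is outside their hypotheses. The paper instead uses a three-annulus lemma for $t$-independent closed coclosed one-forms and a stopping-radius construction. It also uses the classification of special Lagrangians in $\C^2$ with two transverse planar ends as plane pairs or Lawlor necks.
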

	
	\begin{example}\label{ex:tree}
		The hypotheses hold for a union of great special Legendrian $2$-spheres
		meeting only in transverse double points, provided $G(\Lambda)$ is
		obtained from a forest by replacing each edge with two parallel edges.
		These two edges correspond to the two antipodal double points of an
		intersecting pair of $2$-spheres. They form the only simple cycle
		containing either edge, so the intersection condition holds.
		The smooth Jacobi fields of each sphere integrate to independent plane
		deformations.
	\end{example}
	
	\begin{samepage}
		A byproduct of the proof is local rigidity of the special Legendrian link.
		\begin{theorem}\label{thm:rigidity}
			Suppose $\Lambda$ is Jacobi-integrable and satisfies the intersection
			condition in Definition~\ref{def:graph-condition}. Then every
			special Legendrian integral two-cycle $\Lambda'$ in $S^5$
			sufficiently close to $\Lambda$ in the varifold topology equals
			$\Lambda_s$ for some $s\in\mathcal M$ near $0$.
		\end{theorem}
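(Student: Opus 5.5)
We argue by contradiction and compactness, and the proof runs in four steps.

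\emph{Setup.} Suppose there is a sequence of special Legendrian integral two-cycles $\Lambda'_j\to\Lambda$ in the varifold topology, with $\Lambda'_j\neq\Lambda_s$ for every $s$ near $0$. The cones $\mathbf C(\Lambda'_j)$ are special Lagrangian, hence area minimizing, and they converge to $\mathbf C_0$.

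\emph{Step 1: regularity away from the double points.} Away from the finitely many double points $p_1,\dots,p_N$, Allard regularity applies because $\Lambda$ has multiplicity one there. So on $\Lambda\setminus\bigcup_k B_\delta(p_k)$, the cycle $\Lambda'_j$ is a normal graph over $\Lambda_{s_j}$. We choose $s_j\in\mathcal M$ to minimize the $L^2$ norm of this graph $u_j$ on the regular region; Jacobi-integrability makes this choice possible. Near each double point, the link is a pair of transverse special Legendrian discs, and its cone is locally the union of two special Lagrangian 3-planes meeting along a line, times the ray direction. Minimizers close to such a pair are classified by the known structure near transverse pairs of planes. Either they are two graphs (the double point persists), or they are a desingularizing neck, namely a Lawlor-type neck, i.e.\ a smoothing of the double point.

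\emph{Step 2: normalized limit.} We normalize $u_j/\|u_j\|_{L^2}$ and pass to a limit, which is a Jacobi field $\phi$ on $\Sigma$ away from the preimages of the double points.
\begin{itemize}
\item If there is no $L^2$ concentration near the double points, then $\phi$ extends smoothly across them (removable singularity, since it is bounded in $L^2$ and the dimension is two). In that case $\phi$ is a smooth Jacobi field orthogonal to the span of $\partial_s\Lambda_s$, so $\phi=0$, a contradiction.
\item The main obstacle is concentration. The height may collapse into the necks, which would be precisely the case where some double points are smoothed while the remaining error on the regular part is much smaller.
\end{itemize}

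\emph{Step 3: the gluing obstruction.} Here we use the graph condition. Smoothing a double point with a neck of size $\varepsilon_k$ forces an asymptotic mismatch between the two sheets, of order $\varepsilon_k^{\alpha}$, which must be absorbed by the regular part. Concretely, the resulting flux or period of the Lagrangian condition (the $\omega$-area through the neck) must be balanced by a Jacobi field with prescribed logarithmic or Green's-type singularities at the two preimages of $p_k$. Integrating $\operatorname{Im}\Omega$ or the Liouville form gives, for each component $\Sigma_v$, a linear balancing condition: the neck parameters along the edges incident to $v$ must sum to zero, weighted suitably. With Lagrangian phase considerations, the weights are the exact-period quantities.

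These conditions form a linear system on edges. For a cactus graph, its solutions are supported on cycles, and the only freedom is one parameter per simple cycle. We then expect the Legendrian (exactness) condition around each cycle to force that parameter to vanish. The mechanism is the holonomy around the cycle: each edge belongs to at most one cycle, so the cycle constraints decouple. The upshot is that all $\varepsilon_k$ are bounded by a small multiple of the regular-part error $\|u_j\|$. This contradicts concentration, which reduces us to the non-concentrating case of Step 2.

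\emph{Step 4: conclusion.} Hence $\|u_j\|=0$ for large $j$, so $\Lambda'_j=\Lambda_{s_j}$, contradicting the choice of $\Lambda'_j$ and proving the rigidity. The hardest step, in our view, is Step 3. It requires the precise expansion of the neck asymptotics and the linear algebra on the cactus graph, and it is where we expect the discrete \L{}ojasiewicz mechanism to enter.
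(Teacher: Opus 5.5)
\paragraph{Gap in Step 3.} Your Steps 1--2 match the paper. The non-concentrated case is handled as in Lemma~\ref{lem:nonconcentration}, and reducing the vertex flux balances on a cactus to one coefficient $t_j$ per simple cycle is Step~1 of Proposition~\ref{prop:assembly}.

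The gap is the last move of Step~3: the loop period around a cycle does not force $t_j$ to vanish. Your comparison link $\Lambda_s$, chosen by best approximation, is only a member of the Jacobi-integrable family, and that family need not preserve the double points. Near each double point the two sheets of $\Lambda_s$ are joined by a short Reeb chord whose action $\beta_e(s)$ is in general nonzero. The period on the Legendrian $\Lambda'$ is zero, but computed through the necks it is
\[
\int_\gamma\lambda=g_j(s)+2\sum_{e\in\gamma_j}a_e\log|a_e|+\text{error},\qquad g_j(s)=\sum_{e}\xi_e(\gamma_j)\beta_e(s).
\]
So the logarithmic neck terms, of size $\rho^2\log(1/\rho)$, can be cancelled by the chord actions. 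These are a priori only $O(\rho)$, from the separation of the neck centres, which is far larger. Your holonomy argument works only when $g_j(s)=0$, as in the two-plane example where antipodal symmetry forces this. The cycle constraints also do not decouple. A Jacobi function on one component affects every edge at that vertex, and cycles of a cactus may share vertices. Flux balance therefore controls only one combined gradient.

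\paragraph{The missing mechanism.} What is needed is a way to rule out this cancellation; the paper does it as follows.
\begin{enumerate}
\item It computes $d\beta_e(s)[\dot s]=2\{h(p_{e,+})-h(p_{e,-})\}$ for the Jacobi function $h$ of $\dot s$.
\item It tests the flux balance against \emph{all} Jacobi functions, obtaining $\|\sum_e a_e\,d\beta_e(s)\|\le C(\rho^4\log(1/\rho)+\sqrt E)$. After the cycle reduction, this says that $s$ is an approximate critical point of the weighted action $\sum_j t_jg_j$.
\item Each $g_j$ is analytic with $g_j(0)=0$, so the \L{}ojasiewicz gradient inequality, in the weighted form of Lemma~\ref{lem:weighted-action-lojasiewicz}, bounds $|\sum_j w_jg_j(s)|^\theta$ by this small gradient.
\item Meanwhile the period identity gives $\sum_jt_jg_j(s)\approx2\sum_j\#\gamma_j\,t_j^2\log(1/|t_j|)$. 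This is a sum of positive terms of total size at least of order $\rho^4\log(1/\rho)$. Positivity comes from having a single coefficient per cycle, which is where the cactus condition enters.
\end{enumerate}
Because $\theta$ may be close to $1$, the contradiction needs the flux error to be $O(\rho^4\log(1/\rho))$. That in turn relies on the logarithmic control of the potential between the regular region and the neck (Proposition~\ref{prop:exterior-scalar}).

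\paragraph{Secondary points.} Your dichotomy ``two graphs or a Lawlor neck'' in Step~1 is not available from a classification at fixed scale. The paper first locates the neck scale by the stopping-radius construction and the three-annulus estimate, and only then applies Lemma~\ref{lem:effective-lawlor}. With all this in place the paper proves $D^{2q}\le CE$ (Proposition~\ref{prop:discrete-lojasiewicz}). Rigidity then follows at once, since $E=0$ for a cone.
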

	\end{samepage}
	
	\subsection{The main estimates}
	
	The quantity $D(R)$ measures the normalized $L^2$ height from the best
	approximating cone:
	\begin{equation}\label{eqn:optimized-height}
		D(R)^2=\inf_{s\in\mathcal M} R^{-5}\int_{L\cap(B_{4R}\setminus\overline B_{R/4})}
		\dist(x,\mathbf C_s)^2\dd\|L\|(x).
	\end{equation}
	Put
	$\Theta_0=\|\mathbf C_0\|(B_1)$ and
	\begin{equation}\label{eqn:radial-excess}
		m(R)=R^{-3}\|L\|(B_R)-\Theta_0,
		\qquad E(R)=m(4R)-m(R/4).
	\end{equation}
	When $\mathbf C_0$ is a tangent cone, $m\ge0$ and $m(R)\to0$ as $R\downarrow0$.
	By the monotonicity formula (\ref{eqn:monotonicity}), $E(R)$ is a weighted
	integral of the squared normal component of the radial vector. It
	measures the failure of radial invariance.
	
	The principal estimates are
	\begin{equation}\label{eqn:two-main-estimates}
		m(R)\le C\{D(R)^2+E(R)\},\qquad
		E(R)\ge C^{-1}D(R)^{2q}\quad\text{for some }1<q<2.
	\end{equation}
	The second estimate applies also to nearby conical currents and directly
	implies Theorem~\ref{thm:rigidity}. Together the estimates give
	$E(R)\ge C^{-1}m(R)^q$. Since $q<2$, a numerical iteration makes the sum of
	$\sqrt{E(R)}$ over dyadic scales finite. This controls
	the accumulated drift of the best approximating cone and gives uniqueness
	of the tangent cone; see Section~\ref{sect:uniqueness-tangent-cone}.
	
	\subsection{Proof sketch for the discrete \L{}ojasiewicz estimate}
	\label{sect:two-plane-proof-sketch}
	
	We explain the lower bound for $E$ in (\ref{eqn:two-main-estimates}),
	first when $\mathbf C_0$ is the union of two planes meeting along a line.
	The comparison link $\Lambda_s$ is then the union of two great special
	Legendrian $2$-spheres. Work at unit scale and suppose that
	$0<E\ll D^{2q}\ll1$.
	A normalized Jacobi field limit, together with integrability, rules
	out the case in which the height does not concentrate near the
	intersection line.
	
	In the main case of concentration, away from the intersection line
	$L$ is graphical over a nearby plane pair, while near the line it has local pieces
	modelled on a Lawlor neck times a line. A suitable radial slice
	consists of graphs over the comparison spheres joined by small Lawlor
	necks near the two antipodal double points of the original link.
	Small $E$ requires this slice
	to be nearly special Legendrian. These pieces of $L$ therefore give an
	approximate solution to a special Legendrian gluing problem.
	
	Let $\rho$ be a radius comparable to the largest neck radius.
	The largest logarithmic coefficient of the Lagrangian
	potentials is then comparable to $\rho^2$.
	One can show that $\rho^2\ge C(\varepsilon)^{-1}D^{1+\varepsilon}$
	for arbitrarily small $\varepsilon>0$ (cf.\ Proposition~\ref{prop:stopping}).
	
	\paragraph{The gluing obstruction.}
	On a special Legendrian surface every period of
	$\lambda=\iota_x\omega$ is zero. On $L$, $|\lambda|=|x^\perp|$, so
	$E$ controls the period around a loop $\gamma$ crossing one neck and returning
	through the other, following the graphical sheets in between.
	Flux balance gives opposite logarithmic coefficients at the two
	punctures of each sphere, to leading order. Since the loop crosses
	the necks in opposite directions, their contributions add, with total
	magnitude comparable to $\rho^2\log(1/\rho)$.
	
	The two comparison spheres need not intersect after deformation.
	Near each original double point, they are joined by a short Reeb chord.
	Let $g(s)$ be the signed sum of the integrals of $\lambda$ along
	these chords. The period formula expresses
	$\int_\gamma\lambda$ as the sum of $g(s)$, the logarithmic neck
	contribution and an error. Thus
	\[
	|g(s)|\ge C^{-1}\rho^2\log(1/\rho)
	-\left|\int_\gamma\lambda\right|
	-o\bigl(\rho^2\log(1/\rho)\bigr).
	\]
	For $E\le\rho^5$, we have $|\int_\gamma\lambda|=O(\rho^2)$,
	so for small $\rho$,
	\[
	|g(s)|\ge C^{-1}\rho^2\log(1/\rho).
	\]
	The two chords are antipodal images. Since the antipodal map preserves
	$\lambda$, their actions agree, but the loop traverses them in opposite
	directions. Thus $g(s)=0$, a contradiction.
	
	We now discuss an alternative approach that can be adapted to the general
	case. The function $g$ is analytic with $g(0)=0$. We rule out cancellation
	by comparing $g(s)$ with $dg/ds$.
	Under a deformation represented
	by a smooth Jacobi function $h$, the derivative of a chord action is
	twice the difference between the values of $h$ at its endpoints. Testing the flux balance
	against $h$ therefore controls $dg/ds$.
	For a single simple cycle, this gives
	\[
	\left|\frac{dg}{ds}\right|
	\le C\{\rho^2\log(1/\rho)+\rho^{-2}\sqrt E\}.
	\]
	The analytic gradient inequality gives
	$|g|^\theta\le C|dg/ds|$ for some $0<\theta<1$. The lower bound for
	$|g(s)|$ and the upper bound for $|dg/ds|$ contradict this when $E$
	is too small. When there are several simple cycles,
	the intersection condition prevents cancellation of the logarithmic
	terms in a suitable weighted sum of their actions. Applying the
	gradient inequality to this sum gives the quantitative obstruction
	$E\ge\rho^{q_0}$ for some $q_0<8$; see Proposition~\ref{prop:assembly}.
	The lower bound for $\rho$ then gives
	$E\ge C^{-1}D^{2q}$ with $q<2$, as required.
	
	\paragraph{Acknowledgements.}
	The author is supported by the Royal Society URF.
	During this work, AI suggested the graph theoretic condition for producing
	a gluing obstruction, which substantially simplified an earlier draft involving a linear system
	of equations coming from the loop period integrals and flux balancing
	conditions.

	\section{Background}\label{sect:background}
	
	\subsection{The monotonicity formula and the boundary formula}
	
	\begin{notation}
		For $z\in\C^3$ and $\rho>0$, let
		$\lambda_{z,\rho}(x)=(x-z)/\rho$ and
		$T_{z,\rho}=(\lambda_{z,\rho})_\#T$.
	\end{notation}
	
	Write $\|T\|$ for the mass measure of $T$ and $x^\perp$ for the
	component of the position vector $x$ orthogonal to the approximate
	tangent plane of $T$ at $x$.
	
	\begin{proposition}\label{prop:monotonicity}
		For a calibrated integral three-current $T$ without boundary in $B_{R_0}$
		and $0<R_1<R_2<R_0$,
		\begin{equation}\label{eqn:monotonicity}
			R_2^{-3}\|T\|(B_{R_2})-R_1^{-3}\|T\|(B_{R_1})
			=\int_{R_1<|x|<R_2}|x^\perp|^2|x|^{-5}\dd\|T\|.
		\end{equation}
		For almost every $R<R_0$,
		\begin{equation}\label{eqn:stokes-boundary-integral}
			3\|T\|(B_R)=\int_{\langle T,|x|,R\rangle}
			\iota_x\Ree\Omega.
		\end{equation}
		Here $\langle T,|x|,R\rangle$ is the current slice, with its boundary
		orientation.
	\end{proposition}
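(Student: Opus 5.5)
The first identity is the standard monotonicity formula for stationary varifolds, here derived via the calibration. Since $T$ is calibrated by $\Ree\Omega$ and $\Ree\Omega$ is closed, $T$ is stationary in $B_{R_0}$. I will apply the first variation formula to the vector field $X(x)=\phi(|x|)x$, where $\phi$ is a cutoff with $\phi=1$ on $[0,r]$ and $\phi$ decreasing to zero just past $r$. The tangential divergence of $x$ along a three-dimensional tangent plane equals $3$. The tangential divergence of $\phi(|x|)x$ equals $3\phi+\phi'(|x|)\,|x^T|^2/|x|$. Writing $|x^T|^2=|x|^2-|x^\perp|^2$, stationarity gives
\[
3\int\phi\dd\|T\|+\int\phi'|x|\dd\|T\|=\int\phi'\frac{|x^\perp|^2}{|x|}\dd\|T\|.
\]
Letting $\phi$ tend to the indicator of $[0,r)$ yields, for a.e. $r$, the identity $\frac{d}{dr}\bigl(r^{-3}\|T\|(B_r)\bigr)=\frac{d}{dr}\int_{B_r}|x^\perp|^2|x|^{-5}\dd\|T\|$ in the distributional sense. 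More precisely, the standard computation gives
\[
\frac{d}{dr}\bigl(r^{-3}I(r)\bigr)=r^{-3}\frac{d}{dr}\int_{B_r}|x^\perp|^2|x|^{-2}\dd\|T\|,
\]
with $I(r)=\|T\|(B_r)$. Since $r^{-3}=|x|^{-3}$ on $\partial B_r$, integrating from $R_1$ to $R_2$ produces \eqref{eqn:monotonicity}. The only technical points are that $I$ is monotone, hence of bounded variation, and that the identity between measures holds with no singular part. The latter follows because both sides are derivatives of nondecreasing functions, matched via the cutoff limit.

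For the boundary formula \eqref{eqn:stokes-boundary-integral}, I will use the calibration directly. Since $T$ is calibrated, $\|T\|(B_R)=T\llcorner B_R(\Ree\Omega)$. By Cartan's formula and $d\Ree\Omega=0$, we have $\mathcal L_x\Ree\Omega=d(\iota_x\Ree\Omega)$. Because $\Omega$ is a complex $(3,0)$-form with constant coefficients, it is homogeneous of degree $3$ under dilation, so $\mathcal L_x\Ree\Omega=3\Ree\Omega$. Hence $3\Ree\Omega=d(\iota_x\Ree\Omega)$. For a.e. $R$, the slice theory gives $\partial(T\llcorner B_R)=\langle T,|x|,R\rangle$, using $\partial T=0$ in $B_{R_0}$. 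Stokes' theorem for currents then gives
\[
3\|T\|(B_R)=(T\llcorner B_R)(d\,\iota_x\Ree\Omega)=\langle T,|x|,R\rangle(\iota_x\Ree\Omega).
\]

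Neither part is a real obstacle. The main care needed is in the first part: passing from the smooth cutoff to sharp balls, and justifying the a.e. differentiation. I will follow Simon's treatment of monotonicity for stationary integral varifolds verbatim. In the second part one should also note that the slice equals $\partial(T\llcorner B_R)$ for almost every $R$, which is why the formula holds only almost everywhere.
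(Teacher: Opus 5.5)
Your proposal is correct and takes essentially the same approach as the paper. The paper cites Allard for the first identity, which you derive in the standard way from stationarity using the radial first variation. It obtains the second identity from $\|T\|(B_R)=(T\llcorner B_R)(\Ree\Omega)$, the slice identity $\partial(T\llcorner B_R)=\langle T,|x|,R\rangle$ and $d(\iota_x\Ree\Omega)=3\Ree\Omega$, exactly as you do.
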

	
	\begin{proof}
		The first identity is the monotonicity formula \cite{Allard}.
		The second follows from calibration, the Stokes formula and
		$d(\iota_x\Ree\Omega)=3\Ree\Omega$.
	\end{proof}
	
	\subsection{Contact geometry and Jacobi fields}
	\label{sect:contact-geometry-jacobi-fields}
	
	Write $\lambda=\iota_x\omega$ for the Liouville one-form.
	The restriction of $\lambda$ to $S^5$ is a contact form with Reeb
	vector field $x\mapsto Jx$.
	An immersion $F:\Sigma\to S^5$ is \emph{Legendrian} if $F^*\lambda=0$,
	and \emph{special Legendrian} if its cone is special Lagrangian.
	A special Legendrian immersion is minimal in the sphere
	\cite[Sections~2 and~3.4]{Joyce}.
	
	We use the Laplacian $\Delta_\Sigma=\operatorname{div}_\Sigma\nabla_\Sigma$
	and write $R=|x|$.
	On the cone, the Jacobi equation for a normal field $V=J\alpha^\sharp$ is
	$d\alpha=d^*\alpha=0$ \cite[Section~5.1]{Joyce}.
	A degree-one Jacobi field has the form $V=J\nabla(R^2\phi)$.
	
	\begin{lemma}\label{lem:jacobi}
		Finite-energy degree-two Jacobi potentials are precisely $R^2\phi$ with
		$\phi\in\ker(\Delta_\Sigma+6)$. They extend smoothly across the
		two distinct preimages in $\Sigma$ of each double point of $\Lambda$,
		separately on each local sheet.
	\end{lemma}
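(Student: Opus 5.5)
Because both the Jacobi operator and the cone metric separate variables, I will reduce the lemma to a statement on the link $\Sigma$. Write $\mathbf C_0=(0,\infty)\times\Lambda$ with metric $dR^2+R^2g_\Sigma$, away from the rays through the double points. A Jacobi field is $V=J\alpha^\sharp$ with $d\alpha=d^*\alpha=0$. On simply connected pieces $\alpha=du$ for a potential $u$, and then $d^*\alpha=0$ says $\Delta_{\mathbf C}u=0$. I take degree two to mean $u$ homogeneous of degree two, consistent with the degree-one field $J\nabla(R^2\phi)$ quoted above. Writing $u=R^2\phi$, the cone Laplacian is
\[
\Delta_{\mathbf C}=\partial_R^2+\tfrac{2}{R}\partial_R+R^{-2}\Delta_\Sigma ,
\]
so
\[
\Delta_{\mathbf C}(R^2\phi)=(2+4)\phi+\Delta_\Sigma\phi=(\Delta_\Sigma+6)\phi .
\]
Hence homogeneous degree-two harmonic potentials correspond exactly to solutions of $\Delta_\Sigma\phi=-6\phi$ on $\Sigma$ minus the preimages of the double points. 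I also have to check that no additive constant or multivalued part can appear. Homogeneity forces $du$ to be homogeneous of degree one, so a period of $du$ around a loop in the link would have to scale like $R^2$ while staying invariant under translating the loop radially. Together with exactness of $\alpha$ on the punctured neighbourhoods, which I still need to verify, this rules out a multivalued part, and any constant is removed by the homogeneity.

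Next I define finite energy as local finiteness of $\int|\alpha|^2$ near the singular rays. By homogeneity this reduces to $\int_{\Sigma\setminus\{p_i\}}(|\nabla\phi|^2+\phi^2)<\infty$ on the link. The main step is the removable singularity statement at each preimage $p$ of a double point. Each preimage is a smooth point of the immersed surface $\Sigma$, so near $p$ I am looking at a function $\phi$ on a punctured disc in a smooth surface. It satisfies $(\Delta_\Sigma+6)\phi=0$ there and has $\phi\in W^{1,2}$.

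I would prove removability as follows. A point has zero $W^{1,2}$-capacity in dimension two, so $\phi$ is a weak solution on the whole disc: test with $\eta\psi$, where $\eta$ is a logarithmic cutoff, and let the cutoff tend to the identity. The Dirichlet error is $\int|\nabla\eta|^2\to0$, and the cross term is controlled by Cauchy--Schwarz. Elliptic regularity then makes $\phi$ smooth across $p$.

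This works on each local sheet separately. The two preimages are distinct points of $\Sigma$, and the finite-energy condition is imposed on $\Sigma$ rather than on the image, so the argument at one preimage never involves the other.

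An alternative check is a Fourier expansion on the punctured disc. The only non-smooth local solutions behave like $\log r$ and $r^{-k}e^{ik\theta}$ to leading order. Their gradients are not in $L^2$, so finite energy excludes them.

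For the converse I take $\phi\in\ker(\Delta_\Sigma+6)$, smooth on $\Sigma$. Then $R^2\phi$ is harmonic, its gradient is homogeneous of degree one, and so it has finite local energy. I expect the main obstacle to be the bookkeeping near the singular rays rather than the analysis. Specifically, I need $\alpha$ to be exact on punctured neighbourhoods; the logarithmic mode, whose gradient fails to be $L^2$ exactly marginally, shows this is where the work lies. I also need the finite-energy hypothesis on the cone to reduce cleanly to $W^{1,2}$ on the link, including near the vertex.
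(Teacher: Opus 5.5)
Your proposal is correct and follows the same route as the paper. The paper's proof is exactly that finite energy puts $\phi$ in $H^1$ near each preimage, that points have zero capacity in the smooth surface $\Sigma$ so the weak equation $(\Delta_\Sigma+6)\phi=0$ extends across them sheet by sheet, and that elliptic regularity finishes; your computation $\Delta_{\mathbf C}(R^2\phi)=(\Delta_\Sigma+6)\phi$ only makes explicit an identification the paper takes for granted, and the exactness you flag as unverified is not an obstacle, since your own scaling argument already kills every period (equivalently, Cartan's formula gives $\alpha=\tfrac12\, d(\iota_{R\partial_R}\alpha)$ for a closed $\alpha$ homogeneous of degree one), and in any case the lemma starts from potentials.
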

	
	\begin{proof}
		Finite energy gives $\phi\in H^1$ near each double-point preimage.
		These points have zero capacity in the smooth surface $\Sigma$, so
		the weak equation extends across each point. Elliptic regularity gives the
		assertions.
	\end{proof}
	
	By the Jacobi-integrability assumption, the space of smooth Jacobi fields
	has constant dimension for $s\in\mathcal M$.
	
	\begin{example*}[Jacobi fields for unions of planes]
		For a round $S^2$, the eigenspace $\ker(\Delta_{S^2}+6)$ consists
		of the restrictions of trace-free quadratic polynomials on $\R^3$.
		The corresponding Jacobi fields integrate to deformations through
		phase-zero special Lagrangian 3-planes. The planes deform independently,
		so unions of great special Legendrian spheres are Jacobi-integrable.
	\end{example*}
	
	\subsection{A linear three-annulus estimate}
	
	In $\R_t\times\R_y^2$, for $K>1$ put
	\[
	U_r=\{(t,y):|t|<r,\ r<|y|<Kr\},\qquad
	\|v\|_r^2=r^{-5}\int_{U_r}|v|^2.
	\]
	
	\begin{lemma}\label{lem:linear-three-annulus}
		Fix $3<\gamma<4$. For sufficiently large $K$ and any $r>0$,
		a nonzero $t$-independent closed and coclosed one-form $v$ on
		$\R\times\{y\in\R^2:r/K<|y|<K^2r\}$
		satisfies
		\[
		\|v\|_r^2\ge K^3\|v\|_{K r}^2
		\quad\Longrightarrow\quad
		\|v\|_{r/K}^2>2K^{\gamma}\|v\|_r^2.
		\]
	\end{lemma}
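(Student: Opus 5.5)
The plan is to reduce to an explicit mode decomposition on a planar annulus and then compare homogeneity exponents. Write $v=a\,dt+b$, where $a$ is a function of $y$ and $b$ is a one-form in $y$; $t$-independence means neither $a$ nor the coefficients of $b$ depend on $t$, and $b$ has no $dt$-component.

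\emph{Reduction.} Since $d(a\,dt)=d_ya\wedge dt$ and $d_yb$ has no $dt$ component, closedness of $v$ gives $d_ya=0$ and $d_yb=0$. Hence $a$ is constant (the annulus is connected). Coclosedness gives $d^*_yb=0$, because $a$ is constant and $t$-independent. So $b$ is a harmonic one-form on the planar annulus $\{r/K<|y|<K^2r\}$.

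\emph{Mode expansion.} In polar coordinates $(\varrho,\theta)$, such a form expands as a combination of the following modes:
\begin{itemize}
\item $d\theta$ and $d\log\varrho$;
\item $d(\varrho^{\pm n}\cos n\theta)$ and $d(\varrho^{\pm n}\sin n\theta)$ for $n\ge1$.
\end{itemize}
Together with $a\,dt$, each mode is homogeneous, with pointwise size $|y|^{\delta}$ for an integer $\delta$: $\delta=0$ for $a\,dt$, $\delta=-1$ for $d\theta,d\log\varrho$, and $\delta=\pm n-1$ for the others.

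These modes are pairwise $L^2$-orthogonal on every circle $|y|=\varrho$. For different angular frequencies, or for $\cos$ against $\sin$, this follows from Fourier orthogonality; $a\,dt$ is pointwise orthogonal to $b$. For $f=\varrho^n\cos n\theta$ and $g=\varrho^{-n}\cos n\theta$ one computes
\[
\langle df,dg\rangle=f_\varrho g_\varrho+\varrho^{-2}f_\theta g_\theta=n^2\varrho^{-2}(\sin^2 n\theta-\cos^2 n\theta),
\]
which integrates to zero in $\theta$. Finally, $d\theta$ is pointwise orthogonal to $d\log\varrho$. Since $U_r$ is a product of $(-r,r)$ with an annulus, orthogonality passes to $U_r$. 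Integrating gives
\[
\|v\|_{r}^2=\sum_j c_j\,r^{e_j},\qquad c_j\ge0,\qquad e_j=2\delta_j-2\in 2\mathbb Z.
\]
Each summand scales exactly as $r^{e_j}$, since the $U_r$ are dilates of one another and the factor $r^{-5}$ cancels $|U_r|\sim r^3$. The domain $\{r/K<|y|<K^2r\}$ contains $U_{r/K}$, $U_r$ and $U_{Kr}$, so the expansion is valid for all three norms.

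\emph{Exponent comparison.} Put $f(r)=\|v\|_r^2$. Split $f(r)=A+B$, where $A$ collects the modes with $e_j\ge-2$ and $B$ those with $e_j\le-4$; this uses that the exponents are even integers, so there is nothing strictly between $-4$ and $-2$. The hypothesis $f(r)\ge K^3f(Kr)$ gives
\[
f(r)\ \ge\ K^3\sum_{e_j\ge-2}c_jK^{e_j}r^{e_j}\ \ge\ K\,A.
\]
Hence $A\le f(r)/K$ and $B\ge(1-K^{-1})f(r)$. Then
\[
f(r/K)\ \ge\ \sum_{e_j\le-4}c_jK^{-e_j}r^{e_j}\ \ge\ K^4B\ \ge\ K^4(1-K^{-1})f(r).
\]
Since $v\neq0$, we have $f(r)>0$. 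As $\gamma<4$, for $K$ large we get $K^4(1-K^{-1})>2K^\gamma$, which gives the conclusion.

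The only step needing care is the orthogonality and exhaustiveness of the mode decomposition, in particular the non-exact mode $d\theta$ and the $\pm n$ pairs. This is the standard Laurent expansion of the holomorphic function $b_1-ib_2$ on the annulus (where $b=b_1dy_1+b_2dy_2$), together with the cross-term computation above.
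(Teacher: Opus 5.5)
Your proof is correct and follows essentially the paper's argument. You reduce $v$ to a constant $dt$-coefficient plus a harmonic one-form on the planar annulus, and your real mode basis is just the Laurent expansion of $b_1-ib_2$ used in the paper. You then split the modes at exponent $e_j\ge-2$ versus $e_j\le-4$, which is the paper's $k\ge0$ versus $k\le-1$ with $a\,dt$ placed in the $k=0$ group, and run the identical chain of inequalities; your explicit orthogonality check for the $\pm n$ pairs fills in a detail the paper leaves implicit.
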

	
	\begin{proof}
		The proof is modelled on the three-annulus arguments in \cite{ES,Sze21}.
		Write $v=c(y)\,dt+a(y)\,dy_1+b(y)\,dy_2$.
		Closedness and coclosedness say that $c$ is constant and $a-ib$ is
		holomorphic in $y_1+iy_2$. Integrating in $t$ gives
		\[
		\|v\|_\rho^2
		=2\rho^{-4}\int_{\rho<|y|<K\rho}(|a|^2+|b|^2+|c|^2)\,dy.
		\]
		The Laurent expansion of $a-ib$ decomposes the transverse part into
		orthogonal homogeneous terms of integer degrees $k$. A term of degree
		$k$ has squared norm proportional to $\rho^{2k-2}$.
		
		Let $b_k^2$ be the contribution of the $k$-th angular Fourier mode to
		the squared norm at scale $r$, with
		the contribution of $c\,dt$ included in $b_0^2$.
		Parseval's identity and dilation give
		\[
		\|v\|_r^2=\sum_k b_k^2,\qquad
		\|v\|_{K r}^2=\sum_k K^{2k-2}b_k^2,\qquad
		\|v\|_{r/K}^2=\sum_k K^{2-2k}b_k^2.
		\]
		The hypothesis gives
		\[
		\sum_{k\ge0}b_k^2\le K^2\|v\|_{K r}^2
		\le K^{-1}\|v\|_r^2.
		\]
		Hence
		\[
		\|v\|_{r/K}^2\ge K^4\sum_{k\le-1}b_k^2
		\ge K^4(1-K^{-1})\|v\|_r^2
		>2K^{\gamma}\|v\|_r^2
		\]
		for sufficiently large $K$, since $\gamma<4$.
	\end{proof}
	
	\subsection{Classification of transverse limits with planar ends}
	
	We recall the classification of transverse special Lagrangians with
	planar ends and the asymptotic coefficients of Lawlor necks
	\cite{Lawlor,LSSz}.
	
	\begin{lemma}\label{lem:lawlor-classification}
		Let $T$ be a boundaryless calibrated two-current in $\C^2$ with quadratic
		mass growth. Suppose that outside a compact set, $T$ consists of
		two multiplicity-one normal graphs $V_i$ over affine special
		Lagrangian planes $Q_i$, $i=1,2$. Assume that the planes have distinct
		directions and $|V_i(x)|\to0$ as $|x|\to\infty$. Then
		$T=Q_1\cup Q_2$, or $T$ is a Lawlor neck asymptotic to
		$Q_1\cup Q_2$.
	\end{lemma}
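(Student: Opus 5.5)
The plan is to reduce to the classical classification of Lawlor and of Lotay--Sz\'ekelyhidi--Szőke (\cite{Lawlor,LSSz}) for special Lagrangians in $\C^2$ asymptotic to a pair of transverse planes. The argument passes through a cone at infinity and a holomorphic description of the special Lagrangian condition.

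\textbf{Step 1: the cone at infinity.} Since $T$ has quadratic mass growth and is calibrated, the monotonicity formula (the two-dimensional analogue of Proposition~\ref{prop:monotonicity}) applies. So $R^{-2}\|T\|(B_R)$ is nondecreasing and bounded, and blow-downs $T_{0,R}$ with $R\to\infty$ subconverge to tangent cones at infinity. The graphical assumption with $|V_i|\to 0$ identifies every blow-down as the multiplicity-one pair $Q_1^0\cup Q_2^0$ of linear planes parallel to $Q_i$. Hence the density at infinity equals $2$. By upper semicontinuity of density and monotonicity, every point of $\operatorname{spt}T$ has density at most $2$, with equality only if $T$ is a cone with vertex at that point.

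\textbf{Step 2: reduction to a holomorphic curve.} In $\C^2$ I will apply a hyperkähler rotation, swapping $J$ for a complex structure $I$ with respect to which $\Ree\Omega$-calibrated planes become complex lines. Then $T$ becomes a positive holomorphic $1$-current, hence an algebraic-type curve by Harvey--Shiffman/King. Because it has finite area growth of order two and two graphical ends asymptotic to distinct lines, it is a degree-$2$ affine algebraic curve with two ends asymptotic to $Q_1,Q_2$, in suitable complex coordinates.

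\textbf{Step 3: classification of the curve.} Choose linear $I$-holomorphic coordinates $(z,w)$ in which $Q_1^0=\{w=0\}$ and $Q_2^0=\{z=0\}$; this uses that the directions are distinct, hence transverse. The curve has degree $2$ with asymptotic lines in these two directions, so its defining polynomial has leading part $zw$. Thus the curve is $\{(z-a)(w-b)=c\}$. The constants $a,b$ are forced by the affine planes $Q_i$, since the ends converge to $Q_i$ themselves and not merely to parallel translates. If $c=0$ we obtain $Q_1\cup Q_2$. If $c\neq 0$ we obtain a smooth connected annulus, which after rotating back is exactly the Lawlor neck asymptotic to $Q_1\cup Q_2$, with the phase of $c$ matching the orientation data. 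Multiplicity one follows from the multiplicity-one ends together with the constancy theorem on the irreducible components.

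\textbf{Main obstacle.} The delicate point is the passage in Step~2 from a calibrated current to an algebraic curve of degree two. Decay $|V_i|\to0$ must be converted into the statement that the defining function has bounded degree. Here I would use Bishop's theorem: the closure of the curve in $\mathbb{CP}^2$ is analytic because its area is finite for the Fubini--Study metric, which follows from quadratic growth. The asymptotic graphicality then pins down the points at infinity and the degree.
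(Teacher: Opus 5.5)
Your proposal is correct and follows essentially the same route as the paper: a hyperk\"ahler rotation to a positive holomorphic curve, closure in $\mathbb{CP}^2$ via finite projective area and a removable-singularity (Bishop) theorem, Chow's theorem giving a degree-two curve, and then the dichotomy between two lines and the conic $Z_1Z_2=c$. The differences are minor: you get finite Fubini--Study area and the degree from the global quadratic growth and the density at infinity, whereas the paper closes each end directly from the graph description and counts intersection multiplicities at infinity; also, no condition on the phase of $c$ is needed, since every $c\neq0$ gives a Lawlor neck.
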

	
	\begin{proof}
		Write $z_j=x_j+iy_j$. Upon taking the hyperk\"ahler rotation
		$w_1=x_1+ix_2$, $w_2=y_1-iy_2$, the K\"ahler form is
		$\Ree(dz_1\wedge dz_2)$, and the holomorphic volume form is
		$\omega-i\operatorname{Im}(dz_1\wedge dz_2)$.
		The current is therefore a positive complex curve.
		In the projective compactification each decaying graphical end meets
		the line at infinity at one point with intersection multiplicity one.
		The graph description gives finite projective area there, so the
		removable-singularity theorem for analytic curves closes the end.
		The resulting projective curve has degree two and, by Chow's theorem,
		is the union of two lines or an irreducible conic. In suitable complex
		coordinates $Z_1,Z_2$, the conic has equation $Z_1Z_2=a+ib$, with $a,b\in\R$,
		and is a Lawlor neck.
	\end{proof}
	
	\subsection{A criterion for Lawlor neck extraction}
	
	Fix a compact family of pairs of affine special Lagrangian $3$-planes
	$\Pi_\pm=\mathbb R\vec v+z_\pm+Q_\pm$, where $\vec v$ is a unit vector,
	$z_\pm\perp\vec v$, and $Q_-,Q_+\subset(\mathbb C\vec v)^\perp$ are
	uniformly transverse. Fix $K>1$ and
	$C_1>1+\max_\pm|z_\pm|$ uniformly over this family.
	Write $x=t\vec v+z_\pm+y$, where $y\in Q_\pm$, and set $r=|y|$.
	
	\begin{lemma}\label{lem:effective-lawlor}
		For any fixed ball $B$ centred at $0$, there is $C>1$ such that, for every
		$\varepsilon>0$, there are $C_2(\varepsilon)>C_1+2K$ and $\varepsilon_1>0$ with the following property.
		Let $T$ be a closed special Lagrangian integral current in
		$B_{C_2(\varepsilon)}\supset\overline B$. On
		$\{r>C_1,\ |t|<r/(8K)\}\cap B_{C_2(\varepsilon)}$, assume $T$ can be written
		as a multiplicity-one normal graph $V$ over the affine planes, with
		$|V|+r|\nabla V|+r^2|\nabla^2V|\le C_1/r$.
		If
		\[
		\begin{gathered}
			\|T\|(B_{C_2(\varepsilon)})\le C_1C_2(\varepsilon)^3,\qquad
			\int_{B_{C_2(\varepsilon)}}|\vec v^\perp|^2\,d\|T\|\le\varepsilon_1,\\
			\int_{B_{C_1+2K}}\dist(x,\Pi_-\cup\Pi_+)^2\,d\|T\|(x)\ge C_1^{-1},
		\end{gathered}
		\]
		then $T$ in $B$ is a multiplicity-one $C^2$ graph of norm at most
		$\varepsilon$ over a Lawlor neck times $\R$, with $C^{-1}\le |a+ib|\le C$.
	\end{lemma}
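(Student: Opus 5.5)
The proof will be by contradiction and compactness, followed by a classification of the limit through Lemma~\ref{lem:lawlor-classification}. Fix the ball $B$ and $\varepsilon>0$. Suppose that for a sequence $C_2^{(j)}\to\infty$ and $\varepsilon_1^{(j)}\to0$ there are currents $T_j$ and plane pairs $\Pi^{(j)}_\pm$ that satisfy all the hypotheses but fail the conclusion. The constant $C$ in the conclusion is chosen at the end, and it will depend only on $B$, $C_1$, $K$ and the compact family.

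\textbf{Compactness.} After passing to a subsequence, the plane pairs converge within the compact family. The mass bound $\|T_j\|(B_R)\le C_1 R^3$ holds for every fixed $R$, by monotonicity applied at scale $C_2^{(j)}$. Federer--Fleming compactness for calibrated currents therefore gives a special Lagrangian limit $T_\infty$ on all of $\C^3$ with cubic mass growth. Since $\int|\vec v^\perp|^2\to0$ on exhausting balls, $\vec v$ is tangent to $T_\infty$ almost everywhere. A calibrated current invariant in this sense is translation invariant in the $\vec v$ direction, so $T_\infty=\R\vec v\times T'$, where $T'$ is a two-current in $(\R\vec v)^\perp$. The phase constraint and $\omega|_{T}=0$ force $T'$ to lie in $(\C\vec v)^\perp\cong\C^2$ up to translation, and $T'$ is a calibrated two-current there, since $\Ree\Omega=dt\wedge(\text{calibration on }\C^2)$. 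The graphical hypothesis $|V|+r|\nabla V|+r^2|\nabla^2V|\le C_1/r$ holds uniformly on $\{r>C_1,|t|<r/(8K)\}$, so it passes to the limit. Combined with translation invariance, it shows that outside a compact set $T'$ is two decaying multiplicity-one graphs over $z_\pm+Q_\pm$, and $T'$ has quadratic mass growth.

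\textbf{Classification and nondegeneracy.} Lemma~\ref{lem:lawlor-classification} now says that $T'$ is either $Q_-+z_-\cup Q_++z_+$ or a Lawlor neck asymptotic to it. The integral lower bound $\int_{B_{C_1+2K}}\dist(x,\Pi_-\cup\Pi_+)^2\ge C_1^{-1}$ converges by varifold convergence with mass bounds, since the distance function is continuous. It therefore excludes the plane pair, so $T'$ is a Lawlor neck with $a+ib\neq0$. The Lawlor necks asymptotic to a given pair of planes in the compact family form the set of $a+ib$ in a closed punctured family. The lower bound $C_1^{-1}$ together with the graphical bound $C_1/r$, which forces the neck decay coefficient $|a+ib|\lesssim C_1^2$, gives two-sided bounds $C^{-1}\le|a+ib|\le C$ depending only on $C_1$, $K$ and the family. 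These bounds can also be made uniform a priori, by running the same compactness argument once with the conclusion weakened.

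\textbf{Upgrading convergence.} The limit is smooth and multiplicity one, so Allard's regularity theorem, or simply smooth convergence of calibrated currents near a multiplicity-one smooth limit, gives $C^{2}$ convergence of $T_j$ to $T_\infty$ on a neighbourhood of $\overline B$. Multiplicity one near the neck core follows because the mass density of the limit is one everywhere. For large $j$, $T_j\cap B$ is then a $C^2$ graph of norm $<\varepsilon$ over the Lawlor neck times $\R$, which contradicts the choice of the sequence.

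I expect the main obstacle to be the passage from ``$\int|\vec v^\perp|^2$ small'' to exact translation invariance together with preserved multiplicity one near the neck. One also needs to rule out extra sheets or mass escaping to the compact region, which could give a limit with additional components such as a multiplicity-two piece. I would handle this with the mass bound and the fact that the ends are exactly two graphs: a compactly supported component of $T'$ would be a closed calibrated curve in $\C^2$ and hence zero. The density then forces multiplicity one, because the projective degree is two.
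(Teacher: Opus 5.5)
Your proposal follows the paper's proof step for step. It uses compactness with $C_2\to\infty$ and $\varepsilon_1\to0$, translation invariance along $\vec v$ from the $\vec v^\perp$ bound, and reduction to a calibrated curve $T'$ with two decaying multiplicity-one planar ends. It then applies Lemma~\ref{lem:lawlor-classification}, excludes the planar alternative by the distance integral, obtains bounds on $|a+ib|$ independent of $\varepsilon$, and finishes with Allard regularity.

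One step is not right as written. You claim that $\omega|_{T_\infty}=0$ forces $T'$ into a single translate of $(\C\vec v)^\perp\cong\C^2$. It only makes the $J\vec v$-coordinate locally constant, i.e.\ constant on each connected component of $T'$. The family only requires $z_\pm\perp\vec v$, and in the application the two sheets are separated precisely in the $\R J\vec v$ direction. So the two ends may lie in different $\C^2$-slices, and then Lemma~\ref{lem:lawlor-classification}, a statement about one current in one copy of $\C^2$ with two ends, does not apply.

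The paper closes this the same way you dispose of compact components. A component with a single multiplicity-one planar end has density ratio at least $1$ at each of its points, tending to $1$ at infinity, so by monotonicity it is its affine asymptote. This gives the plane-pair alternative, which the distance lower bound excludes. With this component-wise treatment added, your argument is complete. (Minor: the decay $|V|\le C_1/r$ gives $|a+ib|\lesssim C_1$ rather than $C_1^2$, which is harmless.)
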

	
	\begin{proof}
		We suppose for contradiction that the assertion fails for some fixed
		$B,\varepsilon$. Choose counterexamples $T_i$ on $B_i$, with
		$\int_{B_i}|\vec v_i^\perp|^2\,d\|T_i\|\to0$.
		By the compactness of minimal surfaces, after passing to a subsequence
		for which the affine planes converge, $T_i$ converges locally to $T_\infty$
		as currents and varifolds.
		The bound on $\vec v^\perp$ makes $T_\infty$ translation invariant
		along $\vec v$, so $T_\infty=\R\vec v\times S$.
		
		The lower distance bound passes to the limit.
		The exterior graphical bound gives smooth convergence on compact subsets
		there and $|V_\infty|\le Cr^{-1}$.
		Thus $S$ has two multiplicity-one ends decaying to the prescribed
		affine planes, and the mass bound gives quadratic mass growth.
		
		The Lagrangian condition makes the $J\vec v$ coordinate constant on
		each component of $S$, which is calibrated by
		$\iota_{\vec v}\Ree\Omega$ in a copy of $\C^2$.
		A component with one end is its affine asymptote by the monotonicity formula.
		Lemma~\ref{lem:lawlor-classification} therefore shows that $S$ is either
		the union of the two affine planes or a Lawlor neck. Any additional compactly supported
		component is zero by the monotonicity formula. The planar alternative contradicts
		the positive distance integral.
		
		Evaluating $Z_1Z_2=a+ib$ on a fixed exterior graphical region bounds
		$|a+ib|$ above. The lower distance bound keeps it uniformly away from
		zero, since necks with $a+ib\to0$ converge to the two affine planes.
		
		Allard regularity and interior estimates now give $C^2$ graphical
		convergence to the product in $B$, a contradiction.
	\end{proof}
	
	\begin{remark*}
		With only one affine plane and without the lower distance bound, the
		conclusion is a normal graph over that plane with $C^2$ norm at most
		$\varepsilon$.
		The same conclusions hold if the normal graphs $V$ are taken over
		submanifolds that are themselves $C^4$ graphs of norm at most $\varepsilon_1$
		over the affine planes in $B_{C_2(\varepsilon)}$.
		These submanifolds have the same affine limits, so the proof is unchanged.
	\end{remark*}

	\section{Graphical extension and Lawlor neck extraction}\label{sect:graphical-extension-lawlor-neck-extraction}
	
	Throughout this section and Section~\ref{sect:gluing-obstruction} we work at
	unit scale and write $D=D(1)$ and $E=E(1)$.
	Our initial smallness
	conditions are
	\begin{equation}\label{eqn:initial-closeness}
		\begin{gathered}
			\int_{L\cap(B_4\setminus\overline B_{1/4})}\dist(x,\mathbf C_0)^2\dd\|L\|(x)
			+\int_{\mathbf C_0\cap(B_4\setminus\overline B_{1/4})}
			\dist(x,\operatorname{spt}L)^2\dd\|\mathbf C_0\|(x)<\delta^2,\\
			\|L\|(B_4)<\|\mathbf C_0\|(B_4)+\delta.
		\end{gathered}
	\end{equation}
	We use $\delta$ for initial closeness and $\rho_0$ for the smallness
	threshold on stopping radii.
	For sufficiently small $\delta$, fix a \emph{best approximating cone}
	$\mathbf C_s$, with $s\in\operatorname{int}\mathcal M$, attaining
	the infimum defining $D$.
	
	On small balls in the regular part of $\mathbf C_0$, sufficiently small
	$L^2$ height and mass ratios close to one give multiplicity-one normal
	graphs by Allard regularity. Interior estimates then control the $C^1$
	norm by the $L^2$ height on slightly larger balls. Conditions
	(\ref{eqn:initial-closeness}) ensure this on fixed compact regions of
	$\mathbf C_0\cap(B_4\setminus\overline B_{1/4})$ away from the singular
	rays. The task of this section is to extend these graphs towards the
	singular rays and describe $L$ in their neighbourhoods.
	
	\subsection{Reduction to the concentrated case}
	\label{sect:concentration-reduction}
	
	We show that small $E/D^2$ forces concentration of the normalized
	height near the rays through the double points of the link.
	
	Let $\pi:S^5\to\mathbb{CP}^2$ be the Hopf projection. Near each original double point, transversality gives a unique
	nearby intersection of the two projected branches of $\Lambda_s$, depending
	smoothly on $s$. Let $q_-,q_+$ be the corresponding points on the two
	local sheets of $\Lambda_s$.
	Choose the parametrizations $F_s$ so that their preimages in $\Sigma$
	are fixed, and denote this finite set by $\mathcal P$. Let $\mathcal R_s$
	be the union of the rays through these points.
	
	\begin{lemma}\label{lem:nonconcentration}
		For every $\varepsilon>0$ and $0<\tau<1/8$, there are $\delta,\varepsilon_1>0$,
		independent of $s$, such that, if (\ref{eqn:initial-closeness}) holds,
		$\mathbf C_s$ is a best
		approximating cone with $s\in\operatorname{int}\mathcal M$,
		$D>0$ and $E\le\varepsilon_1 D^2$, then
		$L$ is given by normal graphs $V$ over the part of $\mathbf C_s$ with
		$1/2<|x|<3$ and $\dist(x,\mathcal R_s)>\tau/2$, and
		\[
		\int_{\mathbf C_s\cap\{1/2<|x|<3,\ \dist(x,\mathcal R_s)>\tau\}}|V|^2\le\varepsilon D^2.
		\]
	\end{lemma}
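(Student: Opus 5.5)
We argue by contradiction and compactness. Suppose that for some fixed $\varepsilon,\tau$ there are sequences $L_i$ satisfying (\ref{eqn:initial-closeness}) with $\delta_i\to0$, best approximating cones $\mathbf C_{s_i}$ with $s_i\in\operatorname{int}\mathcal M$, $D_i>0$ and $E_i\le\varepsilon_{1,i}D_i^2$ with $\varepsilon_{1,i}\to0$, for which the conclusion fails. Since $\delta_i\to0$, we have $L_i\to\mathbf C_0$ as varifolds on $B_4\setminus\overline B_{1/4}$ and $s_i\to0$; in particular $D_i\to0$. On compact subsets of the regular part away from $\mathcal R_{s_i}$, Allard regularity applies. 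Because $\Lambda_s$ depends smoothly on $s$, the regions $\{1/2<|x|<3,\ \dist(x,\mathcal R_{s_i})>\tau/2\}$ sit inside a fixed compact subset of the regular part of $\mathbf C_0$ for large $i$. Hence $L_i$ is a multiplicity-one normal graph $V_i$ over $\mathbf C_{s_i}$ there, with $C^{1,\alpha}$ norm controlled by the $L^2$ height on slightly larger regions, which is $O(D_i)$ by the definition of $D$. This gives the first assertion. For the second we must rule out
\[
\int_{\{\dist>\tau\}}|V_i|^2>\varepsilon D_i^2 .
\]

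Normalize by setting $w_i=V_i/D_i$. Interior estimates for the minimal surface equation, written as a perturbation of the Jacobi operator of $\mathbf C_{s_i}$, give uniform local $C^{2,\alpha}$ bounds on $w_i$ away from $\mathcal R_{s_i}$. We exhaust the regular region by compact sets with $\tau'\downarrow0$, apply Allard regularity on each, and diagonalize. A subsequence then converges in $C^2_{\mathrm{loc}}$ of $\mathbf C_0\cap\{1/4<|x|<4\}\setminus\mathcal R_0$ to a Jacobi field $w$, with $\|w\|_{L^2}\le 1$ by Fatou. The failure of the conclusion gives $\int_{\{\dist>\tau\}}|w|^2\ge\varepsilon$, so $w\ne0$.

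Next we use $E_i$ to force homogeneity. By (\ref{eqn:monotonicity}), $E_i=\int|x^\perp|^2|x|^{-5}\,d\|L_i\|$ over the annulus. On graphical regions, $x^\perp$ equals $R\partial_R V_i-V_i$ up to terms quadratic in $V_i$. Dividing by $D_i^2$ and using $E_i/D_i^2\to0$ yields $R\partial_R w=w$, so $w$ is a degree-one homogeneous Jacobi field, i.e.\ $w=J\nabla(R^2\phi)$ in the notation of Lemma~\ref{lem:jacobi}, with $\phi$ defined on $\Sigma$ minus the preimages $\mathcal P$. The field $w$ has finite $L^2$ norm on the cone annulus. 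To invoke Lemma~\ref{lem:jacobi} we need finite energy, or at least $\phi\in H^1$ near $\mathcal P$. Here we would use that $d\alpha=d^*\alpha=0$ for $\alpha=d(R^2\phi)$, so $\phi$ solves $(\Delta_\Sigma+6)\phi=0$ on the punctured surface with $\nabla\phi\in L^2$. A harmonic-type function on a punctured disc with $L^2$ gradient has at worst a removable singularity, since a $\log$ term has $\nabla$ not in $L^2$ only borderline. If necessary we gauge $\phi$ by constants on components and use a capacity argument. Lemma~\ref{lem:jacobi} then shows that $w$ is a smooth Jacobi field on $\Sigma$.

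Jacobi-integrability now produces the contradiction. The smooth Jacobi fields of $\Lambda_{s_i}$ are spanned by $\partial_s\Lambda_s$, with constant dimension. So there are parameters $s_i'=s_i+D_i\sigma$, with $\sigma$ chosen so that $\partial_\sigma\mathbf C_{s}$ realizes $w$, such that
\[
\dist(x,\mathbf C_{s_i'})=D_i\,|w_i-w|+o(D_i)
\]
on the graphical region. Comparing $D(1)^2$ at $s_i'$ versus $s_i$: on $\{\dist>\tau'\}$ the contribution becomes $o(D_i^2)$, while near the rays the height is unchanged to leading order, because $w$ is smooth and hence bounded there, with a contribution $O(\tau')D_i^2$. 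Since $\int_{\{\dist>\tau'\}}|w|^2\ge\varepsilon$ gains at least roughly $\varepsilon D_i^2$, choosing $\tau'$ small contradicts the minimality of $s_i$. The main obstacle is this last bookkeeping near $\mathcal R_s$. We need the cross term $2D_i\int\dist(x,\mathbf C_{s_i})\,w$ over the non-graphical region near the rays to be controlled. We would handle it with Cauchy--Schwarz, using $\|w\|_{L^\infty}\lesssim\|w\|_{L^2}$ (smoothness and finite dimensionality), together with the fact that the $\tau'$-neighbourhood of the rays has small $\|L_i\|$-measure, $O(\tau'^2)$, by the mass bound and varifold convergence. This gives a cross term of size $O(\tau')D_i^2$, which is acceptable.
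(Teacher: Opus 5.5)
Your proposal is correct and follows the paper's proof: you argue by contradiction with $\delta_i\to0$ and $E_i/D_i^2\to0$, pass the normalized graphs to a Jacobi-field limit that the radial estimate makes degree-one homogeneous, remove the punctures using $L^2$-finiteness and Lemma~\ref{lem:jacobi}, and contradict the best-approximation property through integrability, controlling the $\tau'$-neighbourhood of the rays by the same Cauchy--Schwarz bound. The only difference is in that last step: you contradict minimality with the explicit competitor $s_i+D_i\sigma$, whereas the paper takes the first variation at the interior minimizer $s_i$ to get $L^2$-orthogonality of the limit to every smooth degree-one Jacobi field, hence to itself; the two are equivalent here.
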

	
	\begin{proof}
		Suppose the assertion fails for a sequence $L_i$ with $\delta_i\downarrow0$,
		$E_i/D_i^2\to0$,
		and best approximating cones $\mathbf C_{s_i}$.
		Compactness and (\ref{eqn:initial-closeness}) give a stationary integral
		varifold limit supported on $\mathbf C_0\cap(B_4\setminus\overline B_{1/4})$.
		The second height integral in (\ref{eqn:initial-closeness}) and the
		monotonicity formula ensure that every sheet is present. Constancy, monotonicity and the mass bound give multiplicity one, so
		$L_i\to\mathbf C_0$ as varifolds on $B_4\setminus\overline B_{1/4}$.
		Allard regularity gives the normal graphs $V_i$ on this region
		and on every other compact regular subregion of
		$B_4\setminus\overline B_{1/4}$.
		For a sufficiently small $C^1$ normal graph
		$Y(R,p)=RF_s(p)+V(R,p)$, the vector
		$V-R\partial_RV=Y-R\partial_RY$ is normal to the cone, and its
		projection onto the normal space of the graph is $Y^\perp$. Hence
		\begin{equation}\label{eqn:radial-graph-equation}
			|V-R\partial_RV|
			=R^2\left|\partial_R\left(\frac VR\right)\right|
			\le C|Y^\perp|.
		\end{equation}
		By the interior estimates, a subsequence of $V_i/D_i$ converges
		smoothly on compact regular regions to a Jacobi field $V_\infty$.
		The inequality in (\ref{eqn:radial-graph-equation}) and $E_i/D_i^2\to0$ give
		$R\partial_RV_\infty=V_\infty$. Thus $V_\infty=J\nabla(R^2\phi)$.
		Fatou's lemma and the contradiction hypothesis imply $V_\infty\in L^2$.
		Lemma~\ref{lem:jacobi} now removes the punctures and gives
		$\phi\in\ker(\Delta_\Sigma+6)$ on the compact domain $\Sigma$.
		
		Since each $\mathbf C_{s_i}$ is a best approximating cone, integrability
		implies that $V_\infty$ is orthogonal to every smooth degree-one Jacobi
		field in the annular $L^2$ inner product.
		(The contribution of the $L^2$ pairing from the concentrated region is negligible because,
		with
		\[
		W_{\tau'}=\{x\in B_4\setminus\overline B_{1/4}:
		\dist(x,\mathcal R_0)<\tau'\},
		\]
		the Cauchy--Schwarz inequality bounds it by
		\[
		\frac{C}{D_i}\int_{W_{\tau'}}\dist(x,\mathbf C_{s_i})\dd\|L_i\|(x)
		\le C\|L_i\|(W_{\tau'})^{1/2}.
		\]
		By varifold convergence, the right-hand side tends to zero as
		$i\to\infty$ and then $\tau'\downarrow0$.) Since $V_\infty$ is a smooth
		degree-one Jacobi field, it is orthogonal to itself. Hence
		$V_\infty=0$, contradicting the lower height bound in the contradiction hypothesis.
	\end{proof}
	
	\subsection{Qualitative graphical extension}
	
	Our goal is to extend the normal graphs closer to $\mathcal R_s$.
	We prove a local criterion under approximate translation invariance.
	
	For a unit vector $\vec v\in\C^3$, let
	$Q_-,Q_+\subset(\C\vec v)^\perp$ be transverse real two-planes
	calibrated by $\iota_{\vec v}\Ree\Omega$.
	Fix a compact family $\mathcal F$ of such triples $(\vec v,Q_-,Q_+)$.
	For $z_-,z_+\in\R J\vec v$, set
	\[
	P_\pm=\R\vec v+z_\pm+Q_\pm,\qquad
	\ell_\pm=\R\vec v+z_\pm.
	\]
	Thus $P_-,P_+$ are affine special Lagrangian 3-planes. They intersect
	along $\ell_-=\ell_+$ if $z_-=z_+$, and are disjoint otherwise.
	On each plane write $x=t\vec v+z_\pm+y$, with $y\in Q_\pm$, and
	recall $U_1=\{(t,y):|t|<1,\ 1<|y|<K\}$
	(cf.\ Lemma~\ref{lem:linear-three-annulus}).
	
	\begin{lemma}\label{lem:qualitative-continuation}
		Fix a sufficiently large $K$, and let $C_1>1$ and $\eta>0$.
		There is $\varepsilon_0>0$ with
		the following property. Let $(\vec v,Q_-,Q_+)\in\mathcal F$, assume
		$|z_-|,|z_+|\le C_1$, and define $P_\pm,\ell_\pm$ as above.
		Let $T$ be a closed special Lagrangian integral current in $B_K$ with
		$\|T\|(B_K)\le C_1$. Suppose that outside the unit tubes
		about $\ell_-,\ell_+$, $T$ is given by multiplicity-one normal graphs
		$V_\pm$ over $P_\pm$. Assume these fields extend as special Lagrangian
		graphs with $C^2$ norm at most one to
		$\{|t|<2K,\ 1/2<|y|<2K\}$ on each plane, and satisfy
		\[
		\max_\pm\|V_\pm\|_{L^2(U_1)}\le\varepsilon_0,\qquad
		\int_{B_K}|\vec v^\perp|^2\dd\|T\|\le\varepsilon_0^2.
		\]
		Then the graphical representation extends to $B_4$ outside the tubes of radius $1/(2K)$
		about $\ell_-,\ell_+$, with
		$\max_\pm\|V_\pm\|_{C^2}\le\eta$.
	\end{lemma}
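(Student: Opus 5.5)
We argue by contradiction and compactness, in the spirit of Lemma~\ref{lem:effective-lawlor}. Suppose the statement fails for fixed $K,C_1,\eta$. Then there are counterexamples $T_i$ with data $(\vec v_i,Q_{-,i},Q_{+,i})\in\mathcal F$, $|z_{\pm,i}|\le C_1$ and $\varepsilon_{0,i}\downarrow0$, for which the graphical extension with $C^2$ norm at most $\eta$ fails somewhere in $B_4$ outside the tubes of radius $1/(2K)$. We pass to a subsequence along which the triples and the offsets converge, and along which $T_i\to T_\infty$ as currents and varifolds in $B_K$; this uses the mass bound $\|T_i\|(B_K)\le C_1$ and compactness of calibrated currents. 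Since $\int_{B_K}|\vec v_i^\perp|^2\,d\|T_i\|\to0$, the limit is invariant under translation along $\vec v$, exactly as in the proof of Lemma~\ref{lem:effective-lawlor}. Hence $T_\infty=\R\vec v\times S$ in $B_K$, where $S$ is a calibrated two-current in a copy of $\C^2$ with constant $J\vec v$ coordinate on each component.

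The next step identifies $T_\infty$ on the region of interest. On $\{|t|<2K,\ 1/2<|y|<2K\}$ the graphs $V_{\pm,i}$ have $C^2$ norm at most one, so by Arzel\`a--Ascoli they converge in $C^{1,\alpha}$, and then smoothly by elliptic estimates for the special Lagrangian graph equation, to limits $V_{\pm,\infty}$. These limits are $t$-independent because of the translation invariance. The $L^2$ bound $\|V_{\pm,i}\|_{L^2(U_1)}\le\varepsilon_{0,i}\to0$ gives $V_{\pm,\infty}=0$ on $U_1$. Now $V_{\pm,\infty}$ solves the (nonlinear, elliptic, real-analytic) special Lagrangian graph equation on the connected annular region $\{|t|<2K,\ 1/2<|y|<2K\}$, so unique continuation forces $V_{\pm,\infty}\equiv0$ there. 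Thus $S$ coincides with $Q_-+z_-$ and $Q_++z_+$ on $\{1/2<|y|<2K\}$, and also on $|y|\ge1$ where $T$ is graphical.

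It remains to treat the part of $S$ inside the unit tubes. The components of $S$ meeting $\{|y|<1\}$ have boundary only at the graphical annuli, and these annuli are pieces of the flat planes. A calibrated two-current inside a small ball whose boundary behaviour near $\partial\{|y|<1\}$ is that of the two planes must be determined by the planar pieces. If $z_-\ne z_+$, each flat disc component is forced: its boundary circle bounds the flat disc, which is the unique minimizer, and the monotonicity formula together with the mass bound excludes extra closed pieces. If $z_-=z_+$, the slice of $S$ is a holomorphic curve after the hyperk\"ahler rotation used in Lemma~\ref{lem:lawlor-classification}. Its boundary then consists of two linear circles in transverse lines, so $S$ is either the pair of discs or a piece of a Lawlor neck $Z_1Z_2=a+ib$. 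In the neck case the graph over each plane on $1/2<|y|<2K$ would be nonzero, with size $\sim|a+ib|/r$, contradicting $V_{\pm,\infty}=0$; hence $a+ib=0$. In both cases $T_\infty$ is the union of the two affine planes, with multiplicity one by the mass and graph structure.

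Finally, Allard regularity applies on $B_4$ away from the tubes of radius $1/(2K)$: these points lie on the limit planes, away from their intersection line, and have density one. Allard regularity and interior estimates then give smooth graphical convergence of $T_i$ to the planes on that region, so for large $i$ the graphs extend with $C^2$ norm at most $\eta$, which is the desired contradiction.

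I expect the \textbf{main obstacle} to be the third step, ruling out a neck or extra pieces inside $|y|<1$ using only the given data. I would first try the holomorphic-curve argument (degree and boundary considerations), with the monotonicity formula as a backup to discard compact components. The large-$K$ hypothesis is not needed in this qualitative argument beyond guaranteeing that $U_1$ sits inside the extended graphical region.
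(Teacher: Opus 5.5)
Your proof is correct and follows the paper's skeleton: compactness, translation invariance from the $\vec v^\perp$ bound, vanishing of the limit graphs by the $L^2$ bound and unique continuation on the connected region $\{|t|<2K,\ 1/2<|y|<2K\}$, identification of $T_\infty$ with $[P_-]+[P_+]$, and Allard regularity.

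The genuine difference is how you identify $T_\infty$ inside the unit tubes. The paper does this uniformly, without a case split. $T_\infty$ coincides with each $P_\pm$ on an open set, and its singular set has Hausdorff dimension at most one, so it does not disconnect a $3$-plane. Real analyticity therefore continues each $P_\pm\cap B_{K/2}$ through the tubes as a full multiplicity-one component. What remains is closed, calibrated, translation invariant and supported in the tubes, so its slice is compactly supported and vanishes by monotonicity. A Lawlor neck is excluded implicitly, since it agrees with no plane on an open set. You instead split according to whether the limit axes coincide. When $z_-\ne z_+$ you use uniqueness of the flat disc as a minimizer; when $z_-=z_+$ you use holomorphic curves. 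This makes the exclusion of the neck explicit, at the cost of more machinery.

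One step needs rephrasing. A filling whose boundary is exactly two linear circles cannot be a piece of a Lawlor neck, so ``discs or a piece of a neck'' is not the right dichotomy. There are two clean fixes. One is to extend the slice by the exact planes outside the ball and apply Lemma~\ref{lem:lawlor-classification}, discarding the neck because $Z_1Z_2=a+ib$ with $a+ib\ne0$ contains no open subset of a line. The other is to note that $Z_1Z_2$ is holomorphic on the chain and vanishes where the chain agrees with the lines. By the identity theorem it then vanishes on every irreducible component meeting the exterior, and compact components are impossible.

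Two small points. Largeness of $K$ is also needed so that $B_4$ sits well inside the ball where the limit is identified. The graphs produced by Allard regularity should be matched with the given ones by uniqueness of normal projection.
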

	
	\begin{proof}
		We follow the compactness argument of \cite[Proposition~3.2]{CL}.
		Suppose the conclusion fails as $\varepsilon_0\downarrow0$.
		The parameter and mass bounds give a subsequence with convergent model
		parameters and $T_i\to T_\infty$ as currents and varifolds.
		
		The uniform $C^2$ bound and interior elliptic estimates give $C^2$
		convergence of the graphical fields on compact subsets of
		$\{|t|<2K,\ 1/2<|y|<2K\}$.
		Their limits vanish on $U_1$ by the $L^2$ smallness, and hence on this
		connected domain by unique continuation. The portions of $P_\pm\cap B_K$
		outside the unit tubes are compactly contained in this domain, so the
		fields tend to zero there in $C^2$.
		Thus $T_\infty$ is closed and calibrated, and agrees with $[P_-]+[P_+]$
		in $B_K$ outside the unit tubes about the two axes.
		
		The vanishing of $\int_{B_K}|\vec v_i^\perp|^2\dd\|T_i\|$ implies
		that $\vec v$ is tangent to $T_\infty$ almost everywhere in $B_K$.
		Hence $T_\infty$ is translation invariant along the $\vec v$ direction.
		The singular set has Hausdorff dimension at most one and does not
		disconnect the 3-planes. By real analyticity, each nonempty
		$P_\pm\cap B_{K/2}$ is a multiplicity-one component of $T_\infty$.
		Since $T_\infty$ agrees with the two planes outside the unit tubes,
		translation invariance implies that any
		remaining component meeting $B_{K/2}$ has a compactly supported closed
		calibrated slice transverse to $\vec v$. This slice is zero by the monotonicity formula.
		Thus $T_\infty=[P_-]+[P_+]$ in $B_{K/2}$.
		
		On each $P_\pm$, the part of $\overline B_4$ at distance at least
		$1/(2K)$ from the axes is a compact subset of the multiplicity-one
		regular set of the limit. Allard regularity \cite[Section~3]{Minter}
		and interior estimates give the required $C^2$ bounds for graphs of
		$T_i$ over $P_{\pm,i}$. Uniqueness of normal projection identifies them
		with the original graphs on overlaps, contradicting the choice of sequence.
		The same argument applies when the affine planes are replaced by sheets
		converging to them in $C^2$.
	\end{proof}
	
	\begin{remark*}
		For a single model plane, the same argument gives extension over
		its intersection with $B_4$, with the same bound.
	\end{remark*}
	
	\subsection{The nonlinear three-annulus estimate}
	
	We return to $L$ and its comparison cone $\mathbf C_s$.
	Given $q\in\mathcal R_s$ with $2/3\le |q|\le7/3$,
	write $V(t,y)$ for the normal vector field representing $L$ as a graph
	over the corresponding smooth branch of $\mathbf C_s$, in coordinates
	centred at $q$.
	Here $t$ is arclength along the ray, with $t=0$ at $q$, and the
	transverse coordinates $y$ are orthonormal there. Recall $U_r$ from
	Lemma~\ref{lem:linear-three-annulus} and put
	\[
	\widetilde U_r=\{(t,y):|t|<2Kr,\ r/2<|y|<2Kr\}.
	\]
	Define the norms
	\[
	\|V\|_r^2=r^{-5}\int_{U_r}|V|^2,\qquad
	\|V\|_{C^2_r}=\sum_{j=0}^2r^j\|\nabla^jV\|_{L^\infty(\widetilde U_r)}.
	\]
	The integral uses the induced measure on this cone branch.
	
	Set
	\[
	\mathcal E(q,r)=r^{-3}\int_{L\cap B_{Cr}(q)}|x^\perp|^2\,d\|L\|,
	\]
	where $C$ is fixed and large enough that $B_{Cr}(q)$ contains the graphs over
	$\widetilde U_{r/K}\cup\widetilde U_r\cup\widetilde U_{Kr}$.
	For sufficiently small $r$, monotonicity gives $\mathcal E(q,r)\le Cr^{-3}E$.
	\begin{proposition}\label{prop:perturbative-transfer}
		Fix $3<\gamma<4$ and choose $K$ sufficiently large as in
		Lemma~\ref{lem:linear-three-annulus}.
		There is $\eta>0$ such that, for sufficiently small $r>0$, if the graph is
		defined on $\widetilde U_{r/K}\cup\widetilde U_r\cup\widetilde U_{Kr}$, then
		\[
		\|V\|_r^2\ge K^3\|V\|_{K r}^2
		\quad\Longrightarrow\quad
		\|V\|_{r/K}^2\ge 2K^{\gamma}\|V\|_r^2,
		\]
		provided $\rho^{-1}\|V\|_{C^2_\rho}\le\eta$ for
		$\rho\in\{r/K,r,Kr\}$ and
		$\mathcal E(q,r)\le\eta\|V\|_r^2$.
		Constants are uniform over the fixed compact model family.
	\end{proposition}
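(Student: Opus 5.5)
Rescale by $r$ and argue by contradiction and compactness, so that the statement reduces to the linear three-annulus estimate, Lemma~\ref{lem:linear-three-annulus}. Suppose the assertion fails. Then there are sequences $\eta_i\downarrow0$, $r_i\downarrow0$, points $q_i\in\mathcal R_{s_i}$ with parameters converging in the compact model family, and graphs $V_i$ such that
\[
\|V_i\|_{r_i}^2\ge K^3\|V_i\|_{Kr_i}^2
\quad\text{but}\quad
\|V_i\|_{r_i/K}^2<2K^\gamma\|V_i\|_{r_i}^2 ,
\]
while $\rho^{-1}\|V_i\|_{C^2_\rho}\le\eta_i$ for $\rho\in\{r_i/K,r_i,Kr_i\}$ and $\mathcal E(q_i,r_i)\le\eta_i\|V_i\|_{r_i}^2$.

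Rescale by $\lambda_{q_i,r_i}$, so the annulus at scale $r_i$ becomes $U_1$. Set
\[
\widetilde V_i(t,y)=r_i^{-1}V_i(q_i+r_it,r_iy)\big/\|V_i\|_{r_i},
\]
so that $\|\widetilde V_i\|_1=1$. The three-annulus inequalities are invariant under this normalization. As $r_i\to0$, the branch of $\mathbf C_{s_i}$ near $q_i$ converges in $C^\infty$ on the rescaled region to the plane $\R\vec v\times Q_\pm$, where $\vec v$ is the ray direction. The rescaled $L$ is a special Lagrangian graph of $C^2$ size at most $\eta_i\to0$ over this nearly flat branch.

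Next derive a linear equation for $\widetilde V_i$. Write the graph as $V=J\alpha^\sharp$ after the usual identification of normal fields with one-forms. The special Lagrangian condition for a $C^2$-small graph over a special Lagrangian branch gives
\[
d\alpha=Q_1(\nabla V,\nabla V),\qquad d^*\alpha=Q_2(\nabla V,\nabla^2V),
\]
with quadratic errors (Joyce's linearization, as in Section~\ref{sect:contact-geometry-jacobi-fields}). After rescaling, the curvature of the cone contributes terms of order $r_i$. After dividing by $\|V_i\|_{r_i}$, the nonlinear terms are bounded by $C\eta_i|\nabla\widetilde V_i|$ and $C\eta_i|\nabla^2\widetilde V_i|$. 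The $L^2$ bounds on the three annuli, with ratios bounded by $K^3$ and $2K^\gamma$, together with interior elliptic estimates on the enlarged regions $\widetilde U$, give uniform $W^{1,2}$ and higher bounds for $\widetilde V_i$ on compact subsets of $U_{1/K}\cup U_1\cup U_K$ and slightly larger sets. A subsequence therefore converges strongly in $L^2$ on the three annuli $U_{1/K},U_1,U_K$ to a closed and coclosed one-form $v$ with $\|v\|_1=1$.

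Now show the limit is $t$-independent. By (\ref{eqn:radial-graph-equation})-type reasoning along the ray, the normal component of the translation vector $\vec v$ satisfies
\[
|\partial_tV|\le C|x^\perp|+Cr\,|\nabla V|+\text{(error from $x$ versus $q$)} .
\]
More concretely, $x^\perp$ at a point of the graph equals the normal projection of $x$. Decomposing $x=q+(x-q)$ with $q$ along $\vec v$, one gets $|q|\,|\vec v^\perp|\le|x^\perp|+|(x-q)^\perp|$. The last term is controlled by $|V-y\cdot\nabla_yV|$-type quantities, so it needs care. The hypothesis $\mathcal E(q_i,r_i)\le\eta_i\|V_i\|_{r_i}^2$ then gives $\int|\partial_t\widetilde V_i|^2\to0$ after rescaling. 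Since $|q|\ge2/3$, the weight $r^{-3}$ matches the $r^{-5}\cdot r^2$ scaling of $\partial_tV$. Hence $\partial_tv=0$.

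I expect this step to be the main obstacle: one must isolate the $\vec v$-component of $x^\perp$ from the transverse term $(x-q)^\perp$, whose size is comparable to $|V|$ itself. What I would try first is to use $x^\perp$ against the cone's radial structure. Since the cone is exactly dilation invariant, $x^\perp$ on the graph equals the normal projection of $V-R\partial_RV$, by the computation preceding (\ref{eqn:radial-graph-equation}). The direction $\partial_R$ at scale $r$ differs from $\vec v=\partial_t$ only by $O(r)$ times transverse derivatives. Thus $|\partial_tV|\le C|x^\perp|+C|V|+Cr|\nabla V|$. After rescaling, the $|V|$ term contributes only $O(r_i)$ relative to $\|\partial_t\widetilde V\|$, and so vanishes in the limit.

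Finally, pass the inequalities to the limit using strong $L^2$ convergence on the annuli. This gives $\|v\|_1^2\ge K^3\|v\|_K^2$ and $\|v\|_{1/K}^2\le2K^\gamma\|v\|_1^2$ for a nonzero $t$-independent closed and coclosed $v$. That contradicts Lemma~\ref{lem:linear-three-annulus}, whose conclusion is a strict inequality. Uniformity over the compact model family follows because all constants in the compactness argument depend only on the family.
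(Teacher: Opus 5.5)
Your proposal is correct and follows essentially the same argument as the paper: argue by contradiction, normalize to $\widetilde V_i$ with $\|\widetilde V_i\|_1=1$, use (\ref{eqn:radial-graph-equation}) to get $\int_{U_\sigma}|\partial_t\widetilde V_i|^2\le C\bigl(r_i^2+\mathcal E(q_i,r_i)/\|V_i\|_{r_i}^2\bigr)\to0$, pass to a $t$-independent closed and coclosed limit, and contradict the strict inequality in Lemma~\ref{lem:linear-three-annulus}. One ordering point needs fixing. Interior estimates on $U_{1/K}\cup U_1\cup U_K$ alone do not stop the $L^2$ mass of $\widetilde V_i$ from concentrating at faces of $U_1$ such as $|t|=1$, which lie on the boundary of this union, so your claim of bounds on ``slightly larger sets'' is unjustified at that stage; the strong convergence on $U_1$ that gives $\|v\|_1=1$ should instead be derived from your $\partial_t$ estimate, as the paper does, while lower semicontinuity suffices on $U_{1/K}$ and $U_K$.
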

	
	\begin{proof}
		Suppose the assertion fails at centres $q_i$ as $\eta$ and the local radius tend to zero.
		Set
		\[
		v_i(t,y)=\frac{V_i(r_it,r_iy)}{r_i\|V_i\|_{r_i}}.
		\]
		The hypothesis and the failure of the conclusion give
		\[
		\|v_i\|_K^2\le K^{-3},\qquad
		\|v_i\|_1^2=1,\qquad
		\|v_i\|_{1/K}^2<2K^{\gamma}.
		\]
		After rescaling, the corresponding branch of the comparison cone converges
		smoothly to a 3-plane.
		The height bounds and (\ref{eqn:radial-graph-equation}) give
		\[
		\int_{U_\sigma}|\partial_t v_i|^2
		\le C\left(r_i^2+\frac{\mathcal E(q_i,r_i)}{\|V_i\|_{r_i}^2}\right)
		\longrightarrow0,
		\qquad \sigma\in\{K^{-1},1,K\}.
		\]
		
		By interior regularity of the special Lagrangian graph equation,
		the three height bounds give local smooth convergence of the normalized
		fields to $v$. The derivative estimate makes $v$ independent of $t$ and
		gives strong $L^2$ convergence on $U_1$, hence $\|v\|_1^2=1$.
		
		Lower semicontinuity gives
		\[
		\|v\|_K^2\le K^{-3},\qquad
		\|v\|_{1/K}^2\le2K^{\gamma}.
		\]
		The one-form associated with $v$ is closed and coclosed and has the
		same norms. Since $\|v\|_1^2=1$, Lemma~\ref{lem:linear-three-annulus}
		gives $\|v\|_{1/K}^2>2K^{\gamma}$, contradicting the second bound.
	\end{proof}
	
	\subsection{Graphical extension up to the stopping radii}
	\label{sect:stopping-construction}
	
	We shall extend the normal graphs towards $\mathcal R_s$ by successively
	replacing the inner radius $r$ in the domains $U_r$ by $r/K$.
	For $R_0\in[2/3,7/3]$, we consider the domains $U_r\subset\widetilde U_r$
	centred at $R_0q_\pm$ on each local sheet of $\mathbf C_s$.
	
	Set
	\[
	D_{R_0}(r)^2=r^{-5}\int_{L\cap\{|t-R_0|<r,\ r<|y|<Kr\}}
	\dist(x,\mathbf C_s)^2\,d\|L\|.
	\]
	In this integral, $(t,y)$ are the coordinates of the normal projection
	onto each sheet, with $t$ measured from the origin along its ray;
	the integral is summed over the two sheets.
	
	Fix $3<\gamma<4$ and choose $K$ sufficiently large.
	Choose $\eta$ and $\varepsilon_0$ sufficiently small as in
	Proposition~\ref{prop:perturbative-transfer} and
	Lemma~\ref{lem:qualitative-continuation}, respectively.
	Then choose $r_0$ sufficiently small depending on these constants
	and the fixed family of nearby cones.
	Finally, choose $\delta$ in (\ref{eqn:initial-closeness}) sufficiently
	small that Allard regularity supplies the initial graphs with
	rescaled $C^2$ norm at most $\eta$ and
	$D_{R_0}(r_0)^2,D_{R_0}(Kr_0)^2<\varepsilon_0^2$.
	Put $r_j=K^{-j}r_0$ for $j\ge0$. At step $j$, the graphs are already defined
	on $\widetilde U_{r_j}\cup\widetilde U_{Kr_j}$. Apply the following
	stopping criteria in order:
	\begin{enumerate}
		\item \emph{Energy criterion.} Stop if
		$\mathcal E(R_0q_\pm,r_j)\ge\varepsilon_0^2$ for at least one of the two signs.
		\item \emph{Height criterion.} Stop if $D_{R_0}(r_j)^2\ge\varepsilon_0^2$.
	\end{enumerate}
	If neither holds, extend the graphs to $\widetilde U_{r_{j+1}}$
	and repeat with $j+1$.
	At centres where neither criterion is ever met, continue at every scale.
	
	\begin{proposition}[Inductive graphical extension]\label{prop:inductive-extension}
		With these choices, the construction is
		well defined at every centre.
		At every step $j$, $L$ is given by normal
		graphs over the corresponding domains of $\mathbf C_s$, with
		\[
		r_j^{-1}\|V\|_{C^2_{r_j}}\le\eta
		\]
		on both sheets.
	\end{proposition}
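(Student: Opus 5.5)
We argue by induction on $j$, at a fixed centre $R_0q_\pm$. The inductive hypothesis at step $j$ is that $L$ is a multiplicity-one normal graph over the two local sheets of $\mathbf C_s$ on $\widetilde U_{r_j}\cup\widetilde U_{Kr_j}$, with $\rho^{-1}\|V\|_{C^2_\rho}\le\eta$ for $\rho\in\{r_j,Kr_j\}$.

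For the base case $j=0$, the choice of $\delta$ in (\ref{eqn:initial-closeness}) and Allard regularity give the initial graphs on $\widetilde U_{r_0}\cup\widetilde U_{Kr_0}$ with rescaled $C^2$ norm at most $\eta$. Well-definedness then amounts to two checks. First, the stopping criteria refer only to quantities that are already defined at step $j$: $\mathcal E$ is an integral over $L$, and $D_{R_0}(r_j)$ uses the normal projection on $U_{r_j}$, where the graph already exists. Second, if neither criterion holds, the graph extends to $\widetilde U_{r_{j+1}}$ with the stated bound.

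For the extension step, assume neither criterion holds at step $j$. Rescale by $r_j$ about the point of the ray at arclength $R_0$. The sheets of $\mathbf C_s$ become $C^4$-close to affine planes $P_\pm$ in the compact model family $\mathcal F$, uniformly once $r_0$ is small, as in the Remark after Lemma~\ref{lem:effective-lawlor}. The translation offsets $z_\pm\in\R J\vec v$ are bounded because the sheets of $\mathbf C_s$ near $q_\pm$ are separated by a short Reeb chord of length $O(r_0)$ relative to $r_j$; this boundedness is one place where we may need to include an additional stopping condition, or use the smallness of $|s|$. In rescaled form the hypotheses of Lemma~\ref{lem:qualitative-continuation} are:
\begin{itemize}
\item the mass bound, from monotonicity;
\item $C^2$ norm at most one on $\{|t|<2K,\ 1/2<|y|<2K\}$, which is $\widetilde U_1$ after rescaling and follows from the inductive bound $\eta\le1$;
\item $\|V_\pm\|_{L^2(U_1)}\le\varepsilon_0$, from the failed height criterion $D_{R_0}(r_j)<\varepsilon_0$, since on a $C^1$-small graph $|V|$ is comparable to $\dist(x,\mathbf C_s)$;
\item $\int|\vec v^\perp|^2\le\varepsilon_0^2$, from the failed energy criterion.
\end{itemize}
The last item needs care: $|x^\perp|$ and $|\vec v^\perp|$ differ by the transverse component $|y|\lesssim Kr_j$. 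We therefore expect to combine $\mathcal E<\varepsilon_0^2$ with the smallness of $r_j$ and with (\ref{eqn:radial-graph-equation}), possibly after shrinking $\varepsilon_0$ relative to $K$. The lemma then extends the graph to $B_4$ outside the tubes of radius $1/(2K)$. This region contains $\widetilde U_{1/K}$ in rescaled coordinates, provided the scales are arranged so that $B_4$ covers $|t|<2$ and the $|y|$ range down to $1/(2K)$; if not, we adjust $K$ in the lemma to a larger fixed multiple. The extended graph has $C^2$ norm at most $\eta$, which is the required bound $r_{j+1}^{-1}\|V\|_{C^2_{r_{j+1}}}\le\eta$ after we choose the lemma's $\eta$ equal to the proposition's.

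The main obstacle is uniformity. The constant $\varepsilon_0$ from Lemma~\ref{lem:qualitative-continuation} must be independent of $j$, so the rescaled geometry at every step must stay in one compact family. This holds only if the rescaled offsets $|z_\pm|/r_j$ remain bounded by a fixed $C_1$. If the offsets became large compared with $r_j$, the two sheets would separate in rescaled coordinates. In that case we would switch to the single-plane Remark after Lemma~\ref{lem:qualitative-continuation}, applied separately to each sheet; its conclusion again extends the graph, so the induction still closes. Finally, the bounds are consistent on overlaps by uniqueness of normal projection, so at centres where neither criterion is ever met the process defines the graph at all scales.
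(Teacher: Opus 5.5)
Your route is the paper's. You induct on $j$, rescale by $r_j^{-1}$, and obtain the hypotheses of Lemma~\ref{lem:qualitative-continuation} from the inductive bound, monotonicity and the two failed stopping criteria. When the second sheet is absent you use the single-plane version. For the energy hypothesis, the identity $|q|\vec v^\perp=x^\perp-(x-q)^\perp$ with $|x-q|\le CKr_j$ gives $C(\mathcal E(q,r_j)+r_j^2)$ directly, and the second term is absorbed by the choice of $r_0$; (\ref{eqn:radial-graph-equation}) is not needed.

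\textbf{The final normalization step is wrong by a factor of $K$.} The normalized norm is not invariant when you pass from scale $r_j$ to $r_{j+1}=r_j/K$. With $\tilde V(x)=r_j^{-1}V(r_jx)$ and norms of $\tilde V$ taken over the rescaled domain $\widetilde U_{1/K}$,
\[
r_{j+1}^{-1}\|V\|_{C^2_{r_{j+1}}}
=K\|\tilde V\|_{L^\infty}+\|\nabla\tilde V\|_{L^\infty}+K^{-1}\|\nabla^2\tilde V\|_{L^\infty}
\le K\|\tilde V\|_{C^2}.
\]
So taking the lemma's output bound equal to $\eta$ gives only $K\eta$ at step $j+1$, not the claimed $\eta$. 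The specific $\eta$ is what Proposition~\ref{prop:perturbative-transfer} requires later. The paper instead applies Lemma~\ref{lem:qualitative-continuation} with output bound $\eta/(2K)$. This is legitimate because $K$ and $\eta$ are fixed before $\varepsilon_0$ is chosen. The spare $\eta/2$ absorbs the $O(Kr_j)$ error of passing from affine tangent planes back to the sheets of $\mathbf C_s$, via the choice of $r_0$.

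\textbf{Your first justification of bounded offsets is incorrect, though your fallback fixes it.} Near $R_0q_\pm$ the two sheets are separated by a Reeb chord of length comparable to $|\beta_e(s)|$. This is fixed by $s$ and can be arbitrarily large compared with $r_j$, so it is not ``$O(r_0)$ relative to $r_j$''. Smallness of $|s|$ does not help at small scales, and no extra stopping condition is needed. State the dichotomy as the paper does, according to whether the other sheet meets $B_{Cr_j}(q)$.
\begin{itemize}
\item Suppose the other sheet meets the ball. Since $Jq_-$ is normal to the Lagrangian cone, the distance from $R_0q_+$ to that sheet is comparable to $|q_+-q_-|$. Hence the centres are $O(r_j)$ apart and their radial directions differ by $O(r_j)$. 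An $O(r_j)$ special unitary rotation makes both axes parallel to $\vec v$. The rescaled separation then lies in $\R J\vec v$ with length bounded by a constant that you fold into $C_1$.
\item Otherwise, the single-plane version applies.
\end{itemize}
Your ``large offset'' branch reduces to this only if $C_1$ is taken large compared with $K$. Then an offset exceeding $C_1$ keeps the second sheet out of $B_K$, which the single-plane version requires; this should be stated explicitly.
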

	
	\begin{proof}
		The initial graphs and their bounds follow from the choice of $\delta$.
		Suppose the graphs have been constructed down to $r_j$ and neither
		stopping criterion holds. After rescaling by $r_j^{-1}$, the fields
		are defined on $\widetilde U_1$ with $C^2$ norm at most $\eta<1$.
		Uniform transversality makes normal height comparable to distance from
		$\mathbf C_s$ on $U_{r_j}$, so on each sheet
		\[
		\|V\|_{r_j}\le C D_{R_0}(r_j)<C\varepsilon_0.
		\]
		This verifies the $L^2$ hypothesis in
		Lemma~\ref{lem:qualitative-continuation}.
		
		At either centre $q=R_0q_\pm$, put $\vec v=q/|q|$.
		The monotonicity formula gives the required rescaled mass bound, and
		$|q|\vec v^\perp=x^\perp-(x-q)^\perp$ gives
		\[
		r_j^{-3}\int_{L\cap B_{Cr_j}(q)}|\vec v^\perp|^2\,d\|L\|
		\le C\bigl(\mathcal E(q,r_j)+r_j^2\bigr)
		\le C(\varepsilon_0^2+r_0^2).
		\]
		Up to a negligible error of order $O(r_j)$ in the rescaled $C^2$ norm,
		we can replace each local sheet of $\mathbf C_s$ by its affine tangent plane
		at $R_0q_\pm$.
		If both sheets meet $B_{Cr_j}(q)$, their centres are $O(r_j)$ apart,
		so their radial directions differ by $O(r_j)$.
		Hence, up to $\mathrm{SU}(3)$ rotations of order $O(r_j)$, we can
		assume that both radial axes are parallel to $\vec v$.
		Since the centres lie in $\mathbb C\vec v$, the separation of these axes
		lies in $\mathbb R J\vec v$.
		This gives the planes required in Lemma~\ref{lem:qualitative-continuation}.
		Apply Lemma~\ref{lem:qualitative-continuation} with output bound $\eta/(2K)$
		(using its single-plane version if only one sheet meets the ball).
		Since
		\[
		\widetilde U_{r_{j+1}}
		=\{|t|<2r_j,\ r_j/(2K)<|y|<2r_j\}
		\subset B_{4r_j}\cap\{|y|>r_j/(2K)\},
		\]
		Lemma~\ref{lem:qualitative-continuation} supplies the graphs near
		$\overline{\widetilde U_{r_{j+1}}}$ with rescaled $C^2$ norm at most
		$\eta/2$ at scale $r_{j+1}$.
		Returning to the comparison cone adds $O(Kr_j)$, absorbed by the
		choice of $r_0$, and gives
		$r_{j+1}^{-1}\|V\|_{C^2_{r_{j+1}}}\le\eta$.
		This completes the induction.
	\end{proof}
	
	At a stopping radius $r_j$, call
	$B_{Cr_j}(R_0q_\pm)$ the terminal balls.
	
	\subsection{A lower bound for a stopping radius}
	
	For the normal vector fields from the stopping construction, put
	\[
	Q=\sup_{V,j}r_j^{\gamma}\|V\|_{r_j}^2,
	\]
	where $r_j$ ranges over all scales at which $V$ is defined by the
	stopping construction, including stopping radii. Consequently,
	$D_R(r)^2\le CQr^{-\gamma}$
	for every $R$ and every scale $r$ at which the graphs are defined.
	
	\begin{proposition}\label{prop:stopping}
		There are $\delta,\varepsilon_1>0$ and $C>1$ with the following property. Suppose
		(\ref{eqn:initial-closeness}) holds, $D>0$ and $E\le\varepsilon_1 D^2$.
		Then $Q\ge C^{-1}D^2$. At some regular radius $R_0\in[2/3,7/3]$,
		the construction stops by the height criterion at a radius $\rho$ with
		\[
		C^{-1}Q\le\rho^{\gamma}\le CQ.
		\]
		Every stopping radius satisfying the height criterion is at most $C\rho$.
	\end{proposition}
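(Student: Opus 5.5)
The plan is to compare the global height $D^2$ with the local quantities $D_{R_0}(r_j)^2$ near the rays $\mathcal R_s$. Lemma~\ref{lem:nonconcentration}, applied with $\varepsilon$ small and $\tau$ comparable to $r_0$, puts all but $\varepsilon D^2$ of the height into the $\tau$-tube about $\mathcal R_s$.

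\textbf{Step 1: decomposition of the tube and $Q\ge C^{-1}D^2$.} For each $R_0\in[2/3,7/3]$, cover the part of this tube near $R_0q_\pm$ by three kinds of region:
\begin{itemize}
\item the dyadic shells $\{|t-R_0|<r_j,\ r_j<|y|<Kr_j\}$ at the scales where the stopping construction has produced graphs;
\item the terminal balls;
\item nothing else, at centres where the construction never stops.
\end{itemize}
Average over $R_0$ in intervals of length $r_j$. The shell at scale $r_j$ then contributes at most $Cr_j^{4}D_{R_0}(r_j)^2$ to $\int\dist(x,\mathbf C_s)^2\,d\|L\|$, which is at most $CQ\,r_j^{4-\gamma}$. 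Since $\gamma<4$, the sum over $j$ is at most $CQ$.

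Inside a terminal ball $B_{Cr}$ we use the crude bound $\dist\le Cr$ together with the mass bound $\|L\|(B_{Cr})\le Cr^3$, both from monotonicity. After averaging in $R_0$, this region contributes $O(r^4)$. The two stopping criteria control $r$:
\begin{itemize}
\item If the energy criterion stops at $r$, then $\varepsilon_0^2\le\mathcal E\le Cr^{-3}E\le Cr^{-3}\varepsilon_1D^2$. Hence $r^4\le C(\varepsilon_1D^2/\varepsilon_0^2)^{4/3}$, which is $\ll D^2$ once $\varepsilon_1$ and $D$ are small.
\item If the height criterion stops at $r$, then $\varepsilon_0^2\le D_{R_0}(r)^2\le CQr^{-\gamma}$. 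Hence $r^4\le C(Q/\varepsilon_0^2)^{4/\gamma}\le CQ$, using that $Q$ is small.
\end{itemize}
Adding the contributions gives $(1-\varepsilon)D^2\le CQ+o(D^2)$, so $Q\ge C^{-1}D^2$.

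\textbf{Step 2: locating the supremum with Proposition~\ref{prop:perturbative-transfer}.} The goal is to show that $Q$ is attained, up to constants, at a height-stopping radius. Along a fixed centre, the three-annulus estimate says that once $\|V\|_r^2\ge K^3\|V\|_{Kr}^2$ holds, $r^\gamma\|V\|_r^2$ at least doubles at each further step inward. This regime is self-propagating, so it persists until the construction stops.

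Outside that regime, $\|V\|_r^2<K^3\|V\|_{Kr}^2$, and then $r^\gamma\|V\|_r^2\le K^{3-\gamma}(Kr)^\gamma\|V\|_{Kr}^2$. Since $3<\gamma$, this quantity is decreasing inward. So the supremum over $j$ at a given centre is attained either at the top scales $r_0,Kr_0$ or at the final scale of a growth phase.

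The top scales contribute at most $C r_0^\gamma D^2$, using Allard and the initial bounds. By Step 1 this is $\le\frac12Q$ once $r_0$ is small. Hence $Q$ is attained at the end of a growth phase, that is, at a stopping radius.

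\textbf{Step 3: ruling out energy stops and deriving the two-sided bound.} The end of a growth phase cannot be an energy stop: in the growth phase $\|V\|_r^2$ is large, while $\mathcal E$ is tiny by the estimate above. So the construction stops by the height criterion at some $\rho$. There the interior estimates give $\|V\|_\rho^2\approx\varepsilon_0^2$ up to constants, with the upper bound coming from the $C^2$ bound $\eta$ of Proposition~\ref{prop:inductive-extension}. Therefore $\rho^\gamma\simeq Q$.

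For any other height-stopping radius $r'$, we have $\varepsilon_0^2\le D_{R_0}(r')^2\le CQr'^{-\gamma}$. This gives $r'^\gamma\le CQ\le C\rho^\gamma$, hence $r'\le C\rho$.

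\textbf{Main obstacle.} The hard part is verifying the hypothesis $\mathcal E(q,r)\le\eta\|V\|_r^2$ of Proposition~\ref{prop:perturbative-transfer} along the growth phase. I would first try the bound $\mathcal E\le Cr^{-3}E\le Cr^{-3}\varepsilon_1D^2$ against a lower bound $\|V\|_r^2\gtrsim D^2r^{-\gamma}$ in that phase. This works because $\gamma>3$, provided the growth phase is entered at a scale where $\|V\|^2\gtrsim D^2$. If this fails, I would let the failure of the energy hypothesis serve as a further stopping event and treat it like an energy stop in Step 1.
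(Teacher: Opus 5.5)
Your architecture matches the paper's: a covering argument giving $D^2\lesssim Q$, then Proposition~\ref{prop:perturbative-transfer} at a near-maximal scale to force a height stop, then the two-sided bound from the height criterion and the graphical bound $\|V\|_\rho^2\le C$. Your bookkeeping for the terminal balls is also fine. But Step~1 is missing an ingredient. $D^2$ integrates $f^2=\dist(x,\mathbf C_s)^2$ over $1/4<|x|<4$. Lemma~\ref{lem:nonconcentration}, however, only controls the graphical part in $1/2<|x|<3$. Your shells centred at $R_0q_\pm$, $R_0\in[2/3,7/3]$, only cover the tube for $|x|$ in roughly $[2/3,7/3]$. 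Nothing in your argument prevents most of $D^2$ from sitting in $1/4<|x|<2/3$ or $7/3<|x|<4$, so $(1-\varepsilon)D^2\le CQ+o(D^2)$ does not follow. The paper closes this with a radial localization that uses $E\ll D^2$. It tests stationarity with $\chi(R)f^2x$, where $\chi$ is Lipschitz, supported in $[1/4,4]$, and satisfies $R\chi'+5\chi=1-c_0\mathbf{1}_{(1,2)}$. Using that $f^2$ is $2$-homogeneous with $|D(f^2)|\le 2f$, this gives
\[
D^2\le c_0\int_{L\cap\{1<R<2\}}f^2\,d\|L\|+CD\sqrt E .
\]
Only then does nonconcentration put a fixed fraction of $D^2$ into the $\tau$-tube over $1<R<2$, which your shells do cover.

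Step~2 has two further problems. First, the top-scale bound is misnormalized. Since $\|V\|_{r_0}^2=r_0^{-5}\int_{U_{r_0}}|V|^2$, the a priori bound $\int_{U_{r_0}}|V|^2\le CD^2$ gives only $r_0^\gamma\|V\|_{r_0}^2\le Cr_0^{\gamma-5}D^2$. This gets worse, not better, as $r_0\downarrow0$. What works is Lemma~\ref{lem:nonconcentration} with $\tau<r_0$ and $\varepsilon\ll r_0^{5-\gamma}$, with $\delta,\varepsilon_1$ chosen after $r_0$. This yields $r_0^\gamma\|V\|_{r_0}^2\le\varepsilon r_0^{\gamma-5}D^2<Q/2$.

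Second, your ``main obstacle'' disappears if you drop the growth-phase bookkeeping and argue directly at a near-maximizer. Take a centre and a scale $\rho$ with $\rho^\gamma\|V\|_\rho^2>Q/2$, perturbing the centre slightly so that the slice at $R_0$ is regular; your proposal never addresses regularity. Then:
\begin{itemize}
\item $\rho<r_0$, by the corrected top-scale bound.
\item Near-maximality alone gives $\|V\|_\rho^2>\tfrac12K^\gamma\|V\|_{K\rho}^2\ge K^3\|V\|_{K\rho}^2$.
\item Since $\|V\|_\rho^2>Q/(2\rho^\gamma)\ge C^{-1}D^2\rho^{-\gamma}$ and $\mathcal E(q,\rho)\le C\rho^{-3}\varepsilon_1D^2$, you get $\mathcal E(q,\rho)\le C\varepsilon_1\rho^{\gamma-3}\|V\|_\rho^2$.
\end{itemize}
The same estimate $\mathcal E\le C\varepsilon_1\rho^{\gamma-3}$ rules out energy stops at radii $\ge\rho$. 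If the construction continued to $\rho/K$, Proposition~\ref{prop:perturbative-transfer} would give $(\rho/K)^\gamma\|V\|_{\rho/K}^2>Q$, a contradiction.

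As written, your claim that the supremum at every centre is attained at the top scales or at the end of a growth phase is unproved. At centres where growth begins at a low level, the energy hypothesis can fail and the doubling need not propagate. Your fallback of declaring that failure a new stopping event would change the construction the proposition is about, so it is not available.
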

	
	\begin{proof}
		\noindent\emph{Step 1: bound $Q$ from below.}
		Put $f(x)=\dist(x,\mathbf C_s)$.
		Choose a fixed Lipschitz function $\chi$, supported in $[1/4,4]$, and
		a constant $c_0>0$ such that
		\[
		R\chi'(R)+5\chi(R)=1-c_0\mathbf1_{(1,2)}(R),
		\qquad 1/4<R<4.
		\]
		Since $f^2$ is homogeneous of degree two and $|D(f^2)|\le2f$,
		stationarity tested with $\chi(R)f^2x$ gives
		\[
		D^2\le c_0\int_{L\cap\{1<R<2\}}f^2\,d\|L\|+CD\sqrt E.
		\]
		Fix a sufficiently small $0<\tau<1/8$.
		Lemma~\ref{lem:nonconcentration} and $E\le\varepsilon_1D^2$ imply,
		after decreasing $\delta$ and $\varepsilon_1$, that
		\[
		\int_{L\cap\{1<R<2,\ \dist(x,\mathcal R_s)<\tau\}}f^2\,d\|L\|
		\ge C^{-1}D^2.
		\]
		
		Cover the set above, up to a subset of $\mathcal R_s$ of zero
		$\|L\|$-measure, by the images of $U_{r_j}$ under the normal graph
		parametrizations and by terminal balls.
		Choosing $C^{-1}r_j$-separated centres along each ray gives at most
		$C/r_j$ regions at scale $r_j$, with bounded overlap of the associated
		balls. For $r_0$ small, all these balls lie in
		$B_4\setminus\overline B_{1/4}$.
		
		Each graphical region contributes at most $CQ r_j^{5-\gamma}$ to the
		integral of $f^2$. For a terminal ball $B_{Cr_j}(q)$, monotonicity and
		$q\in\mathbf C_s$ give
		\[
		\int_{B_{Cr_j}(q)}f^2\,d\|L\|
		\le Cr_j^2\|L\|(B_{Cr_j}(q))\le Cr_j^5.
		\]
		At a height stopping radius, $r_j^\gamma\le CQ$, so this is also bounded
		by $CQ r_j^{5-\gamma}$.
		For balls satisfying the energy criterion, bounded overlap gives a total
		contribution at most $Cr_j^2E$ at each scale.
		Summing over the scales $r=r_0,K^{-1}r_0,\ldots$ and using $\gamma<4$ gives
		\[
		D^2\le CQ\sum_{\text{scales }r}r^{4-\gamma}
		+CE\sum_{\text{scales }r}r^2
		\le C(Q+E).
		\]
		Taking $\varepsilon_1$ sufficiently small and absorbing the $E$ term,
		we obtain $Q\ge C^{-1}D^2$.
		
		\smallskip
		\noindent\emph{Step 2: select a height stopping radius.}
		Choose a centre $q\in\mathcal R_s$, with $R_0=|q|$, and a radius
		$\rho$ at which the corresponding field $V$ is defined, such that
		\[
		Q/2<\rho^\gamma\|V\|_\rho^2\le C\rho^\gamma,
		\]
		where the upper bound follows from the uniform graphical estimate.
		(By a small perturbation of the centre, we may assume that the slice
		at $R_0$ is regular.)
		Since $Q\ge C^{-1}D^2$, Lemma~\ref{lem:nonconcentration} gives
		$r_0^\gamma\|V\|_{r_0}^2<Q/2$ at every centre once $\delta$ and
		$\varepsilon_1$ are sufficiently small.
		Thus $\rho<r_0$, so the graphs are also defined at scale $K\rho$.
		Since $\gamma>3$ and $K$ is sufficiently large, the definition of $Q$ gives
		\[
		\|V\|_\rho^2>\tfrac12K^\gamma\|V\|_{K\rho}^2
		\ge K^3\|V\|_{K\rho}^2.
		\]
		Moreover, $E\le\varepsilon_1D^2$ and $Q\ge C^{-1}D^2$ give
		\[
		\frac{E}{\rho^3\|V\|_\rho^2}
		\le C\frac{E}{Q}\rho^{\gamma-3}
		\le C\varepsilon_1\rho^{\gamma-3}.
		\]
		This verifies the normalized energy bound in
		Proposition~\ref{prop:perturbative-transfer} for sufficiently small $\varepsilon_1$.
		Also, $E/\rho^3\le C\varepsilon_1Q/\rho^3
		\le C\varepsilon_1\rho^{\gamma-3}$, so the energy criterion fails
		at every centre and every radius at least $\rho$.
		
		If the height criterion failed at $R_0$ and $\rho$, the construction
		would reach $\rho/K$, and Proposition~\ref{prop:perturbative-transfer} would give
		\[
		(\rho/K)^\gamma\|V\|_{\rho/K}^2
		\ge2\rho^\gamma\|V\|_\rho^2>Q,
		\]
		contrary to the definition of $Q$.
		Thus $\rho$ is a height stopping radius.
		At any radius $r$ satisfying the height criterion,
		\[
		\varepsilon_0^2r^\gamma
		\le r^\gamma D_R(r)^2\le CQ.
		\]
		Together with $Q\le C\rho^\gamma$, this proves
		$C^{-1}Q\le\rho^\gamma\le CQ$ and $r\le C\rho$.
	\end{proof}
	
	\subsection{Estimates on the graphical region}
	\label{sect:graphical-radial-slices}
	
	The Legendrian neighbourhood construction, applied along $F_s$, gives local
	diffeomorphisms $\Psi_s$ from a neighbourhood of the zero section of
	$T^*\Sigma\times\R$ into $S^5$, depending smoothly on $s$, such that
	\[
	\Psi_s(p,0,0)=F_s(p),\qquad
	\Psi_s^*\lambda=2\left(dz-\sum_j\xi_j\,dp^j\right).
	\]
	Here $(p^j,\xi_j)$ are local cotangent coordinates and $z$ is the
	coordinate on the $\R$-factor.
	Choose these coordinates so that the infinitesimal displacement in
	the fibre directions is $J\xi^\sharp+2zJF_s$.
	Let $\Gamma_R=(\lambda_{0,R})_\#\langle L,|x|,R\rangle$ be the radial
	slice normalized to $S^5$, and write its graphical part as
	$\Psi_s(p,\alpha_R(p),u_R(p))$, where $\alpha_R$ is a one-form
	and $u_R$ is a real-valued function.
	The graphical estimates of
	Proposition~\ref{prop:inductive-extension} apply in these coordinates.
	
	Recall the stopping radius $\rho$ from Proposition~\ref{prop:stopping}.
	Fix $1<p<2$ and put $\rho_*=\rho^p$.
	Assume $E\le\rho_*^5$ and take $\rho$ sufficiently small.
	The construction then reaches $\rho_*$ wherever the height criterion
	has not been met.
	Fix any radial value $R\in[2/3,7/3]$ and a puncture $p_\pm\in\mathcal P$
	corresponding to $q_\pm$.
	Let $r_j$ be the stopping radius at $Rq_\pm$, taken to be zero if
	the construction never stops, and put
	\[
	\sigma=\max\{r_j,\rho_*\}.
	\]
	Let $\bar u_R$ be the circular average of $u_R$ around $p_\pm$.
	The norms $C^k_r$ below are computed after rescaling each annulus
	to unit size.
	
	\begin{proposition}[Estimates for the potential]\label{prop:exterior-scalar}
		There are $\delta,\rho_0,C>0$ such that, if
		(\ref{eqn:initial-closeness}) holds, $0<\rho<\rho_0$ and $E\le\rho_*^5$, then
		$\|u_R\|_{C^2(\Sigma_{r_0})}\le C\rho^2$, where
		$\Sigma_{r_0}=\Sigma\setminus\bigcup_{p\in\mathcal P}B_{r_0}^{\Sigma}(p)$.
		In geodesic polar coordinates $(r,\theta)$ about $p_\pm$, for
		$\sigma<r<r_0$,
		\begin{gather}
			|u_R(r,\theta)|\le C\rho^2(1+\log(r_0/r)),\qquad
			\|u_R-\bar u_R\|_{C^2_r}\le C\rho^2,\notag\\
			|du_R|+r|D^2u_R|
			\le C\left(\frac{\sigma^2}{r}+\rho^2\right).
			\label{eqn:potential-estimates}
		\end{gather}
		For every $\varepsilon>0$, there is $C(\varepsilon)>1$ such that,
		after decreasing $\delta,\rho_0$ if necessary, there is $a\in\R$ with
		$|a|\le C\sigma^2$ such that
		\begin{equation}\label{eqn:neck-coefficient}
			|r\bar u_R'(r)-a|
			\le\varepsilon\sigma^2+C\rho^2r,
			\qquad C(\varepsilon)\sigma<r<r_0.
		\end{equation}
		The constants are uniform in $R\in[2/3,7/3]$ and independent of
		$\rho$ and $\sigma$. We write $C(\varepsilon)$ for constants that may
		depend on $\varepsilon$, reserving $C$ for constants independent of it.
	\end{proposition}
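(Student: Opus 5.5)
The plan is to derive an approximately elliptic equation for $u_R$ on the radial slice, and then read off the estimates from the graphical control on $L$.

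\emph{Step 1: the linearized equation.} In the Legendrian neighbourhood coordinates $\Psi_s$, the graphical part of $\Gamma_R$ is $\Psi_s(p,\alpha_R,u_R)$. Pulling back $\lambda$ gives
\[
\lambda|_{\Gamma_R}=2(du_R-\alpha_R),
\]
up to the higher-order terms coming from the $\xi\,dp$ part. On $L$ we have $|\lambda|=|x^\perp|$, so $du_R-\alpha_R$ is controlled pointwise by $|x^\perp|$ plus quadratic terms. The slices are nearly special Legendrian, and (\ref{eqn:radial-graph-equation}) shows that radial variation is controlled by $x^\perp$. The linearization of the special Legendrian condition says that $\alpha_R$ is approximately coclosed. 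Combining the two gives
\[
(\Delta_\Sigma+6)u_R=f_R,
\]
where $f_R$ is bounded by quadratic terms in $(\alpha_R,u_R)$, which are $O(\eta\cdot\text{height})$, and by terms involving $x^\perp$ and its derivatives. Using $E\le\rho_*^5$ together with interior estimates on annuli of scale $r>\sigma\ge\rho_*$, the $x^\perp$ contribution is $O(\rho_*^{5/2}r^{-3/2}\cdot r^{-1})\ll\rho^2$.

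\emph{Step 2: compact part.} On $\Sigma_{r_0}$, the graph is defined and the normal height satisfies $\|V\|\le CQ^{1/2}r_0^{-\gamma/2}\le C\rho^{\gamma/2}$ by Proposition~\ref{prop:stopping}. Since $\gamma>3$, this is not yet $\rho^2$, so a bootstrap is needed. The height mostly sits in $\alpha_R=du_R+O(\cdot)$. Projecting away from $\ker(\Delta+6)$ is justified because $\mathbf C_s$ is a best approximating cone: the projection onto smooth Jacobi fields is controlled by the optimality of $s$, as in Lemma~\ref{lem:nonconcentration}. I would also try to obtain the sharper $\rho^2$ bound from the neck flux: near $p_\pm$ the dominant behaviour is logarithmic with coefficient $\lesssim\sigma^2\le\rho^2$ (since the height stopping radii are at most $C\rho$), and elsewhere $u_R$ is harmonic-like, so Schauder theory yields $\|u_R\|_{C^2(\Sigma_{r_0})}\le C\rho^2$.

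\emph{Step 3: punctured annuli.} On $\sigma<r<r_0$, I would expand $u_R$ in Fourier modes in $\theta$. The zero mode solves an approximately radial ODE, giving
\[
\bar u_R=a\log r+b+O(\rho^2r).
\]
The flux $\int_{|p|=r}\partial_r u_R$ is nearly constant, with change controlled by $f_R$, and equals $2\pi a$. At the inner radius, the graph bound $\|V\|_{C^2_\sigma}\le\eta\sigma$ gives $|du_R|\le C\sigma$ there; I expect the relevant estimate on the flux itself to be $O(\sigma^2)$, since the flux through a small circle on a nearly Legendrian surface is the action, which is quadratic in the size. This gives $|a|\le C\sigma^2$ and, integrating, $|u_R|\le C\rho^2(1+\log(r_0/r))$. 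The nonzero modes decay like $r^{|k|}$ or $r^{-|k|}$. The three-annulus behaviour from Proposition~\ref{prop:perturbative-transfer}, with $\gamma<4$, excludes large growth of the negative modes beyond what the $\sigma$ boundary permits, yielding $\|u_R-\bar u_R\|_{C^2_r}\le C\rho^2$ and the derivative bound $\sigma^2/r+\rho^2$.

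\emph{Step 4: the refined coefficient (\ref{eqn:neck-coefficient}).} I would argue by compactness. Rescale by $\sigma$ at the stopping ball. Lemma~\ref{lem:effective-lawlor} says the configuration is $\varepsilon$-close to either a Lawlor neck times $\R$ or two planes. In either case the potential on $C(\varepsilon)\sigma<r$ has $r\bar u'$ within $\varepsilon\sigma^2$ of the neck coefficient, and this constant propagates out to $r_0$ with error $C\rho^2r$ from the ODE.

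The main obstacle is Step 4, together with the $\rho^2$ bound in Step 2. Converting only $C^2$-smallness $\eta$ into an $\varepsilon\sigma^2$-accurate, scale-invariant constant requires uniform control of the inhomogeneity $f_R$ across all dyadic annuli. I would first try to bound the summed error using $E\le\rho_*^5$ and $\sigma\ge\rho_*=\rho^p$ with $p<2$, so that $\rho_*^{5/2}$ terms are $o(\sigma^2)$ at every scale.
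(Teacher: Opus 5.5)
Your ingredients are the right ones: the slice equation, Fourier analysis on the punctured discs, best approximation to handle $\ker(\Delta_\Sigma+6)$, and propagation of the circular average. However, there is a genuine gap in the exterior bound, and the error accounting behind (\ref{eqn:neck-coefficient}) is wrong.

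\emph{The exterior bound.} For $M=\sup_R\|u_R\|_{C^2(\Sigma_{r_0})}\le C\rho^2$, your Step~2 uses the logarithmic coefficient bound from Step~3. But Step~3 integrates inward from $r=r_0$, so it needs the outer data, i.e.\ a bound on $M$. The argument is circular as written. Nor can ``Schauder theory'' bound $u_R$ on $\Sigma_{r_0}$: its boundary data there are not yet controlled, and $\Delta_\Sigma+6$ has a kernel. The missing structure is two-stage.
\begin{enumerate}
\item Prove the punctured-disc estimates in terms of the unknown $M$. The inner data are $O(\sigma^2)$ because $|du_R|\le C\sigma$ at $r=\sigma$, and the outer data are $O(M)$. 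Each nonzero mode is controlled by its Dirichlet Green function in $\log r$; this, not Proposition~\ref{prop:perturbative-transfer}, is what bounds the negative modes. The variable-coefficient terms $(a^{ij}-\delta^{ij})\partial_{ij}u_R$ and $b^i\partial_iu_R$ are absorbed by choosing $r_0$ small. This gives $\|u_R-\bar u_R\|_{C^2_r}\le C(M+\sigma^2)$ and $|du_R|+r|D^2u_R|\le C(\sigma^2/r+M)$.
\item Argue by contradiction with $M_i/\rho_i^2\to\infty$. Since $\sigma_i^2/M_i\to0$, $V_i/M_i$ converges to $J\nabla(R^2v)$ with $|dv|\le C$ near the punctures, so Lemma~\ref{lem:jacobi} makes $v$ a smooth Jacobi function on $\Sigma$. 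The tube of radius $C\rho_i$ about the rays contributes $O(\rho_i^3)=o(M_i)$, which gives strong $L^1$ convergence of $\dist(\cdot,\mathbf C_{s_i})/M_i$. The first variation of the best-approximation functional along the integrable deformation in direction $v$ then yields $\int_\Sigma(4v^2+|dv|^2)=0$.
\end{enumerate}
Citing Lemma~\ref{lem:nonconcentration} does not supply this. Optimality of $s$ is an orthogonality over the three-dimensional annulus, including neck regions where $u_R$ is undefined, and it only applies once puncture removability is known. A direct Fredholm version of your plan could work, but only after these $M$-dependent local estimates are in place.

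\emph{The Liouville source.} Your claim that its contribution is $\ll\rho^2$ is false, and this term is exactly what produces (\ref{eqn:neck-coefficient}). The bound is $|f|\le Cr^{-5/2}\sqrt E\le C(\sigma/r)^{5/2}$, which is of order one when $r$ is comparable to $\rho_*$. Its effect on $r\bar u_R'$ between $r_1$ and $r$ is $\int_{r_1}^rt|f|\,dt\le C\sigma^{5/2}r_1^{-1/2}$. This is comparable to $\sigma^2$ when $r_1\sim\sigma$, for instance when $\sigma=\rho_*$. So ``$\rho_*^{5/2}$ terms are $o(\sigma^2)$ at every scale'' fails in the relevant sense. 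Starting at $r_1=C(\varepsilon)\sigma$ makes this contribution at most $C\,C(\varepsilon)^{-1/2}\sigma^2\le\varepsilon\sigma^2$. You can therefore simply set $a=C(\varepsilon)\sigma\,\bar u_R'(C(\varepsilon)\sigma)$. The derivative bound gives $|a|\le C\sigma^2$, using $\rho^2/\sigma\le\rho^{2-p}\ll1$. Integrating the circular-average identity with the bounds above then gives (\ref{eqn:neck-coefficient}).

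No neck compactness is needed. Moreover, Lemma~\ref{lem:effective-lawlor} is not available in general here. It needs the height-criterion lower bound and nearby centres, both of which fail, e.g., at centres where the construction never stops. Its exterior graphical hypothesis is itself obtained from (\ref{eqn:potential-estimates}).
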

	
	\begin{proof}
		\emph{Step 1: estimate the radial error.}
		Write $Y_R(p)=\Psi_s(p,\alpha_R(p),u_R(p))$ and let $d$ denote
		differentiation on $\Sigma$.
		Under the parametrization $(R,p)\mapsto RY_R(p)$ of $L$, the
		Liouville form pulls back to
		\[
		2R^2(\partial_Ru_R\,dR+du_R-\alpha_R).
		\]
		Its closedness on $L$ gives $d\alpha_R=0$ and
		$R\partial_R\alpha_R=2(du_R-\alpha_R)$.
		On $\Gamma_R$,
		\begin{equation}\label{eqn:contact-equation}
			Y_R^*\lambda=2(du_R-\alpha_R).
		\end{equation}
		The restriction of $\lambda$ to the three-dimensional regular part
		of $L$ is harmonic and satisfies $|\lambda|=|x^\perp|$.
		Wherever Proposition~\ref{prop:inductive-extension} supplies the graphs,
		interior estimates on $U_r$ give
		\begin{equation}\label{eqn:liouville-interior}
			r^j\|\nabla^j\lambda\|_{C^1_r}
			\le C r^{-3/2}\sqrt E,\qquad j=0,1.
		\end{equation}
		Since $2/3\le R\le7/3$, the pullback formula and the identity
		for $\partial_R\alpha_R$ give
		\[
		\|\partial_Ru_R\|_{C^2_r}
		+\|\partial_R\alpha_R\|_{C^2_r}
		+\|\partial_RY_R\|_{C^2_r}
		\le Cr^{-3/2}\sqrt E.
		\]
		
		To derive an equation for $u_R$ on each radial slice, use
		(\ref{eqn:contact-equation}) to substitute $\alpha_R=du_R-\tfrac12Y_R^*\lambda$
		in the special Lagrangian equation for $L$, and collect the Liouville and
		radial terms in an error term $f$, controlled by (\ref{eqn:liouville-interior}).
		Let $D^2u_R$ denote the covariant Hessian for the metric
		induced by $F_s$. The special Legendrian operator for the graph
		$\Psi_s(p,du_R,u_R)$ is at most quadratic in $D^2u_R$ and at
		$u_R=du_R=0$ equals $\operatorname{tr}D^2u_R$.
		Its linearization is $\operatorname{tr}D^2u_R+6u_R$, so the special
		Lagrangian equation has the schematic form
		\[
		\begin{split}
			\operatorname{tr}D^2u_R+6u_R
			={}&(u_R,du_R)\#(u_R,du_R)+(u_R,du_R)\#D^2u_R\\
			&+(u_R,du_R)\#D^2u_R\#D^2u_R+f.
		\end{split}
		\]
		Here $\#$ denotes sums of contractions with smooth coefficients depending
		on $(p,u_R,du_R)$; these include the curvature of $\Lambda_s$ and the
		expression of $\Omega$ in the contact coordinates.
		The graphical bounds in Proposition~\ref{prop:inductive-extension} give
		\[
		\|u_R\|_{C^1_r}+\|du_R\|_{C^1_r}\le Cr,\qquad
		\|D^2u_R\|_{C^1_r}\le C.
		\]
		In geodesic normal coordinates about the puncture in $\Sigma$, absorb the
		nonlinear terms involving $D^2u_R$ into $a^{ij}$ to obtain
		\begin{equation}\label{eqn:potential-equation}
			a^{ij}\partial_i\partial_j u_R+b^i\partial_i u_R+cu_R=f.
		\end{equation}
		Here $c$ is uniformly bounded, and $E\le\rho_*^5\le\sigma^5$ gives
		\begin{equation}\label{eqn:potential-equation-bounds}
			\begin{split}
				\|f\|_{C^1_r}&\le Cr^{-5/2}\sqrt E
				\le C(\sigma/r)^{5/2},\\
				\|a^{ij}-\delta^{ij}\|_{C^1_r}+\|b^i\|_{C^1_r}
				&\le Cr+Cr^{-3/2}\sqrt E\le Cr.
			\end{split}
		\end{equation}
		
		\smallskip
		\noindent\emph{Step 2: local estimates in terms of the exterior norm.}
		Set $M=\sup_{2/3\le R\le7/3}\|u_R\|_{C^2(\Sigma_{r_0})}$, and let
		$\Delta_0=\partial_1^2+\partial_2^2$ be the Euclidean Laplacian in the normal
		coordinates. The graphical bounds and (\ref{eqn:contact-equation}) give
		$\|u_R-\bar u_R\|_{C^2_\sigma}+|\sigma\bar u_R'(\sigma)|\le C\sigma^2$,
		while the boundary data at $r=r_0$ are bounded by $M$.
		For $\sigma<r_0/4$, define the finite quantity
		\[
		A=1+\sup_{\sigma<r<r_0}
		\frac{\|du_R\|_{C^1_r}}{\sigma^2/r+M/r_0}.
		\]
		Integrating along radial lines from $r_0$ gives
		$|u_R|\le CA\{M+\sigma^2(1+\log(r_0/r))\}$.
		Hence (\ref{eqn:potential-equation})--(\ref{eqn:potential-equation-bounds}) give
		\[
		|\Delta_0u_R-f|
		\le C(r|D^2u_R|+r|du_R|+|u_R|)
		\le CA\left(\frac{\sigma^2}{r}+\frac{M}{r_0}\right).
		\]
		The circular average satisfies
		\begin{equation}\label{eqn:circular-average}
			r\bar u_R'(r)-\sigma\bar u_R'(\sigma)
			=\int_\sigma^r t\,\overline{\Delta_0u_R}(t)\,dt.
		\end{equation}
		In the variable $t=\log r$, the Dirichlet Green function for each Fourier
		mode $k\ne0$ is bounded by
		$(2|k|)^{-1}e^{-|k||t-s|}$. The boundary data and $f$ contribute
		$C(\sigma^2+rM/r_0)$ and $C\sqrt E\,r^{-1/2}\le C\sigma^2$, respectively.
		Together with the bound for $\Delta_0u_R-f$, this gives
		\[
		|u_R-\bar u_R|
		\le C\left(\sigma^2+\frac{r}{r_0}M\right)
		+CAr\{\sigma^2(1+\log(r_0/r))+M\}.
		\]
		Equation (\ref{eqn:circular-average}) gives the same bound for $r|\bar u_R'|$, using
		$\int_\sigma^{r_0}t(\sigma/t)^{5/2}\,dt\le2\sigma^2$.
		Interior elliptic estimates for (\ref{eqn:potential-equation}) on slightly larger annuli give
		\[
		A\le C+CA\sup_{\sigma<r<r_0}r(1+\log(r_0/r))\le C+Cr_0A,
		\]
		with $C$ independent of $\sigma$ and small $r_0$.
		Fix $r_0$ independently of $\sigma$ so that $Cr_0\le1/2$; absorbing the last term gives $A\le2C$.
		Thus $\|u_R-\bar u_R\|_{C^2_r}\le C(M+\sigma^2)$ and
		$|du_R|+r|D^2u_R|\le C(\sigma^2/r+M)$.
		
		\smallskip
		\noindent\emph{Step 3: bound the exterior norm.}
		Suppose the bound fails along a sequence with $\delta_i,\rho_i\to0$ and
		$M_i/\rho_i^2\to\infty$. Set $f_i=\dist(\cdot,\mathbf C_{s_i})$.
		By monotonicity, the $L^2$ norm of $x^\perp/M_i$ on the full annulus is
		$O(\sqrt{E_i}/M_i)=o(1)$.
		Since $\sigma_i\le C\rho_i$, Steps 1--2 and
		(\ref{eqn:radial-graph-equation}) give, after passing to a subsequence,
		smooth convergence of $V_i/M_i$ on compact regular regions to a
		homogeneous Jacobi field $J\nabla(R^2v)$ with
		$\|v\|_{C^2(\Sigma_{r_0})}=1$.
		Step 2 gives $|dv|\le C$ near the punctures, so
		Lemma~\ref{lem:jacobi} extends $v$ smoothly across them.
		
		On $1<R<2$, the tube of radius $C\rho_i$ contributes
		$O(\rho_i^3)=o(M_i)$ to $\int f_i$, and Step 2 gives uniform
		integrability of $f_i/M_i$ elsewhere. Together with the vanishing radial
		error above, these bounds give strong $L^1$ convergence of $f_i/M_i$
		to $|J\nabla(R^2v)|$ on $B_4\setminus\overline B_{1/4}$.
		
		By integrability, choose cone variations $\mathbf C_{s_i(t)}$ whose
		normal velocities $W_i$ at $t=0$ converge smoothly to $J\nabla(R^2v)$.
		The minimising property of $\mathbf C_{s_i}$ gives
		\[
		\begin{aligned}
			0&=\left.\frac{d}{dt}\right|_{t=0}
			\int_{L_i\cap(B_4\setminus\overline B_{1/4})}
			\dist(x,\mathbf C_{s_i(t)})^2\,d\|L_i\|\\
			&=-2\int_{L_i\cap(B_4\setminus\overline B_{1/4})}
			\langle x-y_i(x),W_i(y_i(x))\rangle\,d\|L_i\|,
		\end{aligned}
		\]
		where $y_i(x)\in\mathbf C_{s_i}$ is a nearest point to $x$.
		After division by $M_i$, the integrand converges in $C^0$ on compact
		regular regions to $|J\nabla(R^2v)|^2$ and is bounded by $Cf_i/M_i$
		near the singular rays.
		Strong $L^1$ convergence therefore gives, after cancelling the radial factor,
		\[
		\int_\Sigma(4v^2+|dv|^2)\,d\mathrm{Area}=0.
		\]
		This contradicts the normalization of $v$ and proves $M\le C\rho^2$.
		Step 2 now gives the stated bounds for $u_R$ and its derivatives.
		
		Finally, take $a=C(\varepsilon)\sigma\bar u_R'(C(\varepsilon)\sigma)$
		for $C(\varepsilon)$ sufficiently large.
		The derivative bound gives $|a|\le C\sigma^2$, since $p<2$ and
		$\rho^2/\sigma\le\rho^{2-p}\ll1$.
		Substituting (\ref{eqn:potential-equation-bounds}) and the bounds just proved into
		(\ref{eqn:circular-average}) gives
		\[
		\begin{split}
			|r\bar u_R'(r)-a|
			&\le\int_{C(\varepsilon)\sigma}^r
			t\,|\overline{\Delta_0u_R}(t)|\,dt\\
			&\le C\rho^2r+\varepsilon\sigma^2,
		\end{split}
		\]
		after choosing $C(\varepsilon)$ large and then $\rho_0$ small.
	\end{proof}

	\subsection{Lawlor neck extraction}
	\label{sect:selected-neck}\label{sect:neck-estimates-stopping-radii}
	
	\begin{lemma}\label{lem:common-slice}
		Fix $0<\varepsilon<1$. There are $C(\varepsilon)>1$ and $\delta,\rho_0>0$
		with the following property. Assume (\ref{eqn:initial-closeness}) holds,
		$0<\rho<\rho_0$ and $E\le\rho_*^5$.
		For every regular $R_0\in[2/3,7/3]$ and every stopping radius
		$\sigma>C(\varepsilon)\rho_*$,
		$L$ in $\bigcup_\pm B_{C(\varepsilon)\sigma}(R_0q_\pm)$ is a multiplicity-one
		normal graph $V$ over a translated product of a Lawlor neck and a line, with
		$\|V\|_{C^2_\sigma}\le\varepsilon\sigma$.
		On the normalized slice $\Gamma_{R_0}$, the logarithmic coefficients
		in (\ref{eqn:neck-coefficient}) may be chosen as $\pm a$ on the
		corresponding $\pm$ ends, where
		\[
		C^{-1}\sigma^2\le |a|\le C\sigma^2.
		\]
		Here the constant $C$ is independent of $\varepsilon$.
	\end{lemma}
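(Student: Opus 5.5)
The plan is to rescale each terminal ball by $\sigma^{-1}$ and apply Lemma~\ref{lem:effective-lawlor}, then read off the neck coefficient from the asymptotics of the Lawlor neck.

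\emph{Rescaling and hypotheses.} Fix a regular $R_0$ and a stopping radius $\sigma=r_j$ at $R_0q_\pm$ with $\sigma>C(\varepsilon)\rho_*$. Set $T=(\lambda_{R_0q_\pm,\sigma})_\#L$. Outside the rescaled tubes of radius comparable to one, Proposition~\ref{prop:inductive-extension} writes $T$ as graphs over the rescaled sheets of $\mathbf C_s$. These sheets are $C^4$-close, with norm $O(\sigma)$, to affine planes $\Pi_\pm=\R\vec v+z_\pm+Q_\pm$; the separation $z_\pm\in\R J\vec v$ is $O(1)$ because both sheets meet the terminal ball. The graph bounds $|V|+r|\nabla V|+r^2|\nabla^2V|\le C_1/r$ at rescaled distance $r$ follow from the potential estimates (\ref{eqn:potential-estimates}) of Proposition~\ref{prop:exterior-scalar}. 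Indeed $V\approx J\nabla(R^2u_R)$ plus the $\alpha_R$ part, and $|du_R|\lesssim\sigma^2/r'+\rho^2$ in unrescaled radius $r'=\sigma r$. After dividing by $\sigma$ this gives $\lesssim 1/r+\rho^2/\sigma$, and $\rho^2/\sigma\le\rho^{2-p}$ is small. The mass bound comes from monotonicity. The axial energy satisfies
\[
\int|\vec v^\perp|^2\lesssim \sigma^{-3}E+\sigma^2\le C(\varepsilon)^{-5}\rho_*^2+\sigma^2,
\]
using $E\le\rho_*^5\le(\sigma/C(\varepsilon))^5$. This is below $\varepsilon_1$ once $C(\varepsilon)$ is large and $\rho_0$ small. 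The annular height has two cases. At a height stopping radius, $D_{R_0}(\sigma)\ge\varepsilon_0$ gives the lower distance bound in Lemma~\ref{lem:effective-lawlor}. At an energy stopping radius we cannot use this, but the hypothesis $E\le\rho_*^5$ with $\sigma>C(\varepsilon)\rho_*$ makes $\mathcal E(q,\sigma)\le C\sigma^{-3}E\ll\varepsilon_0^2$, so energy stopping does not occur in this range. Hence all such stopping radii are height stopping radii.

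\emph{Neck extraction.} Lemma~\ref{lem:effective-lawlor}, with the Remark allowing curved reference sheets and $B$ chosen as $B_{C(\varepsilon)}$ after first fixing $C(\varepsilon)$ and then the corresponding $C_2(\varepsilon)$, gives the $C^2$ graph of norm $\le\varepsilon$ over a Lawlor neck times a line with $C^{-1}\le|a+ib|\le C$. Scaling back gives $\|V\|_{C^2_\sigma}\le\varepsilon\sigma$. We also need graphical control out to radius $C_2(\varepsilon)\sigma$, which requires $C_2(\varepsilon)\sigma<r_0$. If this fails, $\sigma$ is bounded below and the statement follows from a direct compactness argument.

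\emph{Logarithmic coefficients.} On the Lawlor neck $Z_1Z_2=a+ib$, the end on each plane is the graph of $d$ of a potential $\pm c\log r+O(r^{-2})$. Here $c$ is a fixed nonzero multiple of $|a+ib|$, and $c$ has opposite signs on the two ends because the flux through the neck is balanced. Restricting to the slice $\Gamma_{R_0}$ and rescaling by $\sigma^2$ gives $r\bar u_{R_0}'(r)=\pm c\sigma^2+O(\varepsilon\sigma^2)$ for $r\in[C(\varepsilon)\sigma,\,C_2\sigma]$. By (\ref{eqn:neck-coefficient}), $r\bar u'$ is nearly constant, up to $\varepsilon\sigma^2+C\rho^2r$, on $C(\varepsilon)\sigma<r<r_0$. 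We may therefore take $a=c\sigma^2$ on the $+$ end and $-a$ on the $-$ end, after adjusting $\varepsilon$. The bounds $C^{-1}\le|a+ib|\le C$ then give $C^{-1}\sigma^2\le|a|\le C\sigma^2$, with $C$ independent of $\varepsilon$.

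\emph{Main obstacle.} The hardest step will be the precise identification of the Lawlor neck's asymptotic potential with $u_{R_0}$ in the contact coordinates $\Psi_s$. This must be done uniformly, keeping track of the Liouville error terms and of the small nonzero separation $z_\pm$ between the planes, so that the constant $C$ does not depend on $\varepsilon$.
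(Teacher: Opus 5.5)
\textbf{Verdict: there is a genuine gap, in the identification of the logarithmic coefficient.} Your extraction step follows the paper's Step~1 closely. The problem is your claim that the logarithmic coefficient of a Lawlor neck end is a fixed nonzero multiple of $|a+ib|$; this is false.

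\emph{Why the claim fails.} In the rotated coordinates of Lemma~\ref{lem:lawlor-classification}, take the model pair $\{w_2=0\}=\R^2$ and $\{w_1=0\}$. The end of $w_1w_2=a+ib$ over $\R^2$ is $w_2=(a+ib)/w_1$. The one-form it graphs is
\[
\Ree\bigl((a+ib)\,d\log w_1\bigr)=a\,d\log r-b\,d\theta .
\]
So the ends are exact only when $b=0$, and the logarithmic coefficient is the real part $a$. For $a=0\neq b$ it vanishes, even though $|a+ib|\asymp1$. Lemma~\ref{lem:effective-lawlor} only gives $C^{-1}\le|a+ib|\le C$. Your lower bound $|a|\ge C^{-1}\sigma^2$ therefore does not follow.

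\emph{The missing idea.} $L$ is nearly exact, so the neck it approximates must have $|b|\ll\sigma^2$. On the model, $\int_{|Z_1|=r}\lambda=4\pi b$. On $\Gamma_{R_0}$ the same loop integral is controlled by (\ref{eqn:liouville-interior}). The $C^2$ closeness to the neck then gives
\[
|b|\le C\varepsilon\sigma^2+C\sigma^{-1/2}\sqrt E\le C\bigl(\varepsilon+(\rho_*/\sigma)^{5/2}\bigr)\sigma^2 .
\]
Take $\varepsilon$ small and the cutoff $C(\varepsilon)$ in $\sigma>C(\varepsilon)\rho_*$ large. Together with $|a+ib|\asymp\sigma^2$, this gives $|a|\asymp\sigma^2$ with $C$ independent of $\varepsilon$. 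This is exactly where $E\le\rho_*^5$ and the large cutoff are needed. In your write-up they only serve the axial energy bound, which already follows from $\sigma^{-3}E\le\sigma^2$ and shrinking $\rho_0$. Your sentence about ``keeping track of the Liouville error terms'' treats this as bookkeeping, but it is the step that produces the lower bound.

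\textbf{A smaller gap: the separation of the two sheets.} You assert that $z_\pm=O(1)$ ``because both sheets meet the terminal ball''. Nothing in the stopping construction forces this: the height criterion sums over both sheets and can be triggered by one sheet alone. You need $R_0|q_+-q_-|\le C\sigma$ to obtain a uniform $C_1>1+\max_\pm|z_\pm|$ in Lemma~\ref{lem:effective-lawlor}. The paper proves it by contradiction. Otherwise the single-plane version of that lemma at scale $\sigma$ would give $\|V\|_{C^2_\sigma}\le c\varepsilon_0\sigma$ over the sheet through $q$. This contradicts $\int_{U_\sigma}|V|^2\ge C^{-1}\varepsilon_0^2\sigma^5$.

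Finally, your fallback compactness argument for the case $C_2(\varepsilon)\sigma\ge r_0$ is unnecessary. Since $\sigma\le C\rho$, it suffices to shrink $\rho_0$ depending on $\varepsilon$.
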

	
	\begin{proof}
		\emph{Step 1: extract a nontrivial neck.}
		Since
		\[
		\mathcal E(R_0q_\pm,\sigma)
		\le C\sigma^{-3}E\le C\sigma^2<\varepsilon_0^2,
		\]
		the energy criterion fails. The height criterion gives
		\[
		\int_{L\cap\{|t-R_0|<\sigma,\ \sigma<|y|<K\sigma\}}
		\dist(x,\mathbf C_s)^2\,d\|L\|(x)
		\ge\varepsilon_0^2\sigma^5.
		\]
		Consequently $\int_{U_\sigma}|V|^2\ge C^{-1}\varepsilon_0^2\sigma^5$
		on at least one sheet.
		At the corresponding centre $q\in\{R_0q_-,R_0q_+\}$,
		Proposition~\ref{prop:exterior-scalar} and interior estimates give
		\[
		|V|+r|\nabla V|+r^2|\nabla^2V|\le C\sigma^2/r
		\]
		on the exterior graphical region $\sigma\le r\le C_2(\varepsilon)\sigma$,
		for $C_2(\varepsilon)$ as in Lemma~\ref{lem:effective-lawlor}
		and sufficiently small $\rho_0$.
		After rescaling by $\sigma^{-1}$, this is the graphical bound in
		Lemma~\ref{lem:effective-lawlor}.
		The monotonicity formula gives
		\[
		\|L\|(B_{C_2(\varepsilon)\sigma}(q))\le C(C_2(\varepsilon)\sigma)^3.
		\]
		
		Consider the affine tangent planes to the local sheets of $\mathbf C_s$
		at $R_0q_\pm$.
		The centres satisfy $R_0|q_+-q_-|\le C\sigma$. Otherwise the single-plane
		version in the remark after Lemma~\ref{lem:effective-lawlor}, applied
		at scale $\sigma$, would give a normal graph over the local sheet through
		$q$ with $\|V\|_{C^2_\sigma}\le c\varepsilon_0\sigma$, contradicting
		the lower bound on $\int_{U_\sigma}|V|^2$ for sufficiently small $c$.
		
		The radial directions of the two planes therefore differ by $O(\sigma)$.
		An $O(\sigma)$ special unitary rotation of one of the two planes about
		its centre makes both parallel to $\vec v=q/|q|$.
		In $B_{C_2(\varepsilon)\sigma}(q)$, the local sheets of $\mathbf C_s$ are then
		$C^4_\sigma$ graphs of norm $O(C(\varepsilon)\sigma^2)$ over these planes.
		Since $q^\perp=x^\perp-(x-q)^\perp$, the energy and mass bounds give
		\[
		\sigma^{-3}\int_{L\cap B_{C_2(\varepsilon)\sigma}(q)}
		|\vec v^\perp|^2\,d\|L\|
		\le C(\varepsilon)(\sigma^{-3}E+\sigma^2)\le C(\varepsilon)\sigma^2.
		\]
		The lower bound on $\int_{U_\sigma}|V|^2$ verifies the remaining
		height hypothesis of Lemma~\ref{lem:effective-lawlor} after rescaling
		by $\sigma^{-1}$, for suitable choices of $C_1$ and $\rho_0$.
		Lemma~\ref{lem:effective-lawlor} therefore gives
		the claimed normal graph on a ball containing both terminal balls,
		with neck parameter of modulus comparable to $\sigma^2$.
		
		\smallskip
		\noindent\emph{Step 2: identify the coefficients.}
		To identify the coefficients of $u_{R_0}$, work on $\Gamma_{R_0}$
		and apply the dilation $\lambda_{0,R_0}$ to the Lawlor neck model.
		Choose the model coordinates so that the logarithmic term at the plus end
		is $a\log r$, and write its equation as $Z_1Z_2=a+ib$. Then
		\[
		\int_{|Z_1|=r}\lambda=4\pi b.
		\]
		Comparing this with the integral over the corresponding loop on
		$\Gamma_{R_0}$ and using
		(\ref{eqn:liouville-interior}) gives
		\[
		|b|\le C\varepsilon\sigma^2+C\sigma^{-1/2}\sqrt E
		\le C\left(\varepsilon+
		\left(\frac{\rho_*}{\sigma}\right)^{5/2}\right)\sigma^2.
		\]
		Choosing a sufficiently accurate approximation and a sufficiently large
		cutoff constant in $\sigma>C(\varepsilon)\rho_*$ gives $|a|\asymp\sigma^2$.
		The model potentials have logarithmic terms $\pm a\log r$
		on the corresponding $\pm$ ends. The neck approximation at $r=C(\varepsilon)\sigma$
		identifies their coefficients with those in
		(\ref{eqn:neck-coefficient}) to accuracy $\varepsilon\sigma^2$.
	\end{proof}
	
	\paragraph{Flux interpretation of $a$.}
	For the points $q_-,q_+\in\Lambda_s$ corresponding to each double point
	of $\Lambda$, put
	\[
	P=q_-q_-^*=q_+q_+^*,\qquad
	\eta_H=\iota_{JHx}\iota_x\Ree\Omega
	\]
	for any trace-free Hermitian matrix $H$.
	On $\Gamma_{R_0}$, denote the graphical circles at $r=C(\varepsilon)\sigma$ by
	$c_-,c_+$, oriented as boundaries of the corresponding disks
	$B_{C(\varepsilon)\sigma}^{\Sigma}(p_\pm)$.
	
	\smallskip
	\noindent\emph{Claim 1.} For every regular $R_0\in[2/3,7/3]$ and every
	such pair $q_-,q_+$, the sum of the fluxes through $c_-$ and $c_+$ satisfies
	\begin{equation}\label{eqn:paired-boundary-flux}
		\left|\int_{c_-}\eta_H+\int_{c_+}\eta_H\right|
		\le C\|H\|\sqrt E.
	\end{equation}
	Moving either circle within its graphical annulus changes its flux
	by at most $C\|H\|\sqrt E$.
	
	\begin{proof}
		The three-dimensional graphical collars
		allow us to keep the circles $r=C(\varepsilon)\sigma$ fixed in the link coordinates
		as $R$ varies over an interval $I$ containing $R_0$, with $|I|\asymp\sigma$.
		Write $F_H(R)$ for the sum of their fluxes on $\Gamma_R$.
		Restrict each slice to the cutoff neighbourhood bounded by these circles.
		The corresponding undilated region $S\subset L$ lies in
		$\bigcup_\pm B_{C(\varepsilon)\sigma}(R_0q_\pm)$, so
		$\|L\|(S)\le C(\varepsilon)\sigma^3$.
		The identity
		\begin{equation}\label{eqn:ambient-flux}
			d\eta_H=-3\iota_{JHx}\Ree\Omega
		\end{equation}
		and the special Lagrangian condition give
		\[
		\int_I|F_H(R)|\,dR
		\le C\|H\|\int_S|x^\perp|\,d\|L\|
		\le C(\varepsilon)\|H\|\sigma^{3/2}\sqrt E.
		\]
		On the circles, Cartan's formula and the bound for $\partial_RY_R$
		in Step 1 of Proposition~\ref{prop:exterior-scalar} give
		\[
		|F_H'(R)|
		\le C(\varepsilon)\|H\|\sigma\|\partial_RY_R\|_{C^0}
		\le C(\varepsilon)\|H\|\sigma^{-1/2}\sqrt E.
		\]
		Consequently,
		\[
		|F_H(R_0)|
		\le |I|^{-1}\int_I|F_H(R)|\,dR+|I|\sup_I|F_H'|
		\le C(\varepsilon)\|H\|\sigma^{1/2}\sqrt E
		\le C\|H\|\sqrt E.
		\]
		The last inequality holds after decreasing $\rho_0$ in terms of
		$\varepsilon$, since $\sigma\le C\rho$.
		On a graphical annulus, (\ref{eqn:ambient-flux}) and
		(\ref{eqn:liouville-interior}) also show that moving either circle
		changes its flux by at most $C\|H\|\sqrt E$.
	\end{proof}
	
	\smallskip
	\noindent\emph{Claim 2.} On a neck with $\sigma>C(\varepsilon)\rho_*$, the parameter
	$a\in\R$ can be normalized so that
	\begin{equation}\label{eqn:flux-coefficient}
		\int_{c_+}\eta_{P-I/3}=4\pi a,\qquad
		\int_{c_-}\eta_{P-I/3}=-4\pi a+O(\sqrt E),
	\end{equation}
	while preserving the conclusions of Lemma~\ref{lem:common-slice}.
	For $\sigma\le C(\varepsilon)\rho_*$, the first identity defines $a$ with
	$|a|\le C(\varepsilon)\rho_*^2$.
	If $|q_+-q_-|>C(\varepsilon)\sigma$, then $|a|\le C\sqrt E$.
	
	\begin{proof}
		For $\sigma>C(\varepsilon)\rho_*$, write $\tilde a$ for the real parameter of the
		Lawlor neck model in Lemma~\ref{lem:common-slice}, and define $a$ by the first
		identity in (\ref{eqn:flux-coefficient}).
		Set $H=P-I/3$ and $h=\langle HF_s,F_s\rangle$.
		By (\ref{eqn:contact-equation}) and (\ref{eqn:liouville-interior}),
		the linear part of the flux is
		$3\int(h\partial_\nu u_{R_0}-u_{R_0}\partial_\nu h)\,ds
		+O(r^{-1/2}\sqrt E)$.
		Since $h(p_+)=2/3$ and $dh(p_+)=0$, the leading logarithmic term
		$\tilde a\log r$ in $u_{R_0}$ contributes $3\cdot(2/3)\cdot2\pi\tilde a=4\pi\tilde a$.
		
		By Proposition~\ref{prop:exterior-scalar}, the contribution from the
		average zero part of $u_{R_0}$ is bounded by
		$C(\varepsilon\sigma^2+\rho^2r\log(1/\rho)+r^{-1/2}\sqrt E)$,
		and the quadratic nonlinear error is bounded by
		$C(\sigma^4/r^2+\rho^4\log^2(1/\rho)+E/r^3)$.
		With $C(\varepsilon)$ in $r=C(\varepsilon)\sigma$ and
		$\sigma>C(\varepsilon)\rho_*$ chosen sufficiently large,
		the quadratic error is at most $\varepsilon\sigma^2$
		for sufficiently small $\rho$. Hence
		\[
		|a-\tilde a|
		\le C\varepsilon\sigma^2+C(\varepsilon)\rho^2\sigma\log(1/\rho)
		+C\sigma^{-1/2}\sqrt E
		\le C\varepsilon\sigma^2.
		\]
		Replacing $\tilde a$ by $a$ changes the Lawlor neck model by only
		$O(\varepsilon\sigma)$ in the $C^2_\sigma$ norm. With a sufficiently accurate
		initial approximation, the conclusions of
		Lemma~\ref{lem:common-slice} therefore remain valid.
		In particular, (\ref{eqn:neck-coefficient}) holds with $\pm a$
		on the corresponding $\pm$ ends. Claim 1 gives the second identity in
		(\ref{eqn:flux-coefficient}).
		
		When $\rho_*\le\sigma\le C(\varepsilon)\rho_*$, the coefficient bound in
		Proposition~\ref{prop:exterior-scalar} and $E\le\rho_*^5$ give
		$|a|\le C(\sigma^2+\sigma^{-1/2}\sqrt E)\le C(\varepsilon)\rho_*^2$.
		If $|q_+-q_-|>C(\varepsilon)\sigma$, the two cutoff regions lie in disjoint balls
		of radius $O(C(\varepsilon)\sigma)$ about $q_\pm$.
		For each nearby slice used in Claim 1, their boundaries meet $\Gamma_R$
		only along $c_-(R)$ and $c_+(R)$, respectively.
		Since $\Gamma_R$ is closed, its restriction to either region has boundary
		the corresponding circle. The Stokes and radial estimates in Claim 1
		therefore apply to each circle separately. Taking $H=P-I/3$ gives
		\[
		4\pi|a|=\left|\int_{c_+}\eta_{P-I/3}\right|
		\le C(\varepsilon)\sigma^{1/2}\sqrt E\le C\sqrt E.
		\]
	\end{proof}

	\subsection{The upper bound for the density excess}
	
	Recall $\Gamma_R$ and put
	$\zeta=(\iota_x\Ree\Omega)|_{S^5}$. Since each $\Lambda_s$ is a closed
	minimal immersion, its area is independent of $s$. Thus
	(\ref{eqn:stokes-boundary-integral}) gives
	\[
	3m(R)=\int_{\Gamma_R}\zeta-3\Theta_0
	=\int_{\Gamma_R}\zeta-\int_{\Lambda_s}\zeta.
	\]
	
	\begin{proposition}[Excess estimate]\label{prop:density}
		There are $\delta,C>0$ such that, if (\ref{eqn:initial-closeness}) holds
		and $\mathbf C_s$ is a best approximating cone with
		$s\in\operatorname{int}\mathcal M$, then
		\[
		m(1)\le C(D^2+E).
		\]
	\end{proposition}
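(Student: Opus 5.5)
The plan is to average the boundary identity over radial slices and to use the calibration to compare $\int_{\Gamma_R}\zeta$ with $\int_{\Lambda_s}\zeta$ to second order. By the monotonicity formula (\ref{eqn:monotonicity}), $m$ is nondecreasing, so $m(1)\le\int_1^2m(R)\,dR$. It therefore suffices to bound
\[
3\int_1^2 m(R)\,dR=\int_1^2\Bigl(\int_{\Gamma_R}\zeta-\int_{\Lambda_s}\zeta\Bigr)dR .
\]
I first treat $\Gamma_R$ on the region where it is a graph $Y_R=\Psi_s(p,\alpha_R,u_R)$ over $\Lambda_s$. If $D=0$ the slices are exactly $\Lambda_s$ and $m(1)=0$, so we may assume $D>0$.

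\emph{Graphical part.} Write $Y_R^*\zeta-F_s^*\zeta$ as a term linear in the fibre displacement plus a quadratic remainder. The linear term is $\iota_W d\zeta+d(\iota_W\zeta)$ restricted to $\Lambda_s$, where $W$ is the displacement field. Since $\Lambda_s$ is special Legendrian and minimal, $\Lambda_s$ is critical for $\int\zeta$ under compactly supported variations, so $\iota_Wd\zeta$ integrates to zero. The exact term leaves only boundary contributions on the circles $c_\pm$ around the punctures. The quadratic remainder is bounded by $C\int(|u_R|^2+|\alpha_R|^2+|\nabla\alpha_R|^2)$. After integrating in $R\in[1,2]$, the interior estimates and (\ref{eqn:radial-graph-equation}) control this by $C(D^2+E)$: the height controls $|V|^2$, and Caccioppoli-type estimates for the special Lagrangian graph equation convert the derivative terms into height plus $|x^\perp|^2$. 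The boundary terms on $c_\pm$ are of size $r\,|W|\,|\nabla W|$ integrated over circles of radius $r\sim C(\varepsilon)\sigma$. Proposition~\ref{prop:exterior-scalar} controls these.

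\emph{Non-graphical part.} Near each double point the slice leaves the graphical region, either through a terminal ball or through the rays where the construction never stops. Here I would not use the graph structure. Instead I would close the neck piece of $\Gamma_R$ together with the two small cone disks of $\Lambda_s$ by a $2$-cycle, and fill it in $C^3$ by a $3$-chain $S$ lying in the tube of radius $C(\varepsilon)\sigma$. Then $d(\iota_x\Ree\Omega)=3\Ree\Omega$ bounds the difference of the $\zeta$-integrals by $C\,|S|$ plus terms of size $C\sqrt E$ from the radial error.

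\emph{Main obstacle.} The hardest step is the estimate on this concentrated region: a crude volume bound $\sigma^3$ for $S$ is not obviously $\le CD^2$. Lemma~\ref{lem:common-slice} shows that $L$ there is $\varepsilon\sigma$-close to a translated product of a Lawlor neck and a line. I would therefore first try to choose $S$ as the family of chains interpolating between the neck and the two planes, whose $\Ree\Omega$-integral is controlled by the height. Since both ends are calibrated, the integrand vanishes to first order, and one would hope for $\int_S\Ree\Omega=O(\sigma^4\log(1/\sigma))$. Summing over the necks and the dyadic stopping scales would give a total of $C\sum r^4\log(1/r)$. By Step 1 of Proposition~\ref{prop:stopping}, each scale-$r$ region carries $\int f^2\gtrsim\varepsilon_0^2r^4$ along the rays, so this total is at most $C(D^2+E)$.

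The points I am least sure of are two. The first is whether this second-order cancellation holds uniformly when $q_+\ne q_-$, that is, when the neck ends are offset along $\R J\vec v$ by a Reeb chord. The second is whether the graphical and non-graphical boundary contributions on $c_\pm$ match up to $O(D^2+E)$; I would attempt to use the flux identities of Claims~1 and~2 for this.
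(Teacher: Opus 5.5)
\textbf{There is a genuine gap.} Your reduction is correct and matches the paper: monotonicity of $m$, the slice identity $3m(R)=\int_{\Gamma_R}\zeta-\int_{\Lambda_s}\zeta$, and averaging in $R$. The problem is the second-order control at the double points. You leave it open, and the decomposition you propose cannot supply it.

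\emph{The split pieces are too large.} Splitting $\Gamma_R$ at $c_\pm$ produces pieces that are individually much larger than $D^2+E$; they cancel only in the sum.
\begin{itemize}
\item On the graphical tail $|V|\sim\sigma^2/r$. So your remainder $\int|\nabla\alpha_R|^2$ is of order $\sigma^4\int_{C(\varepsilon)\sigma}^{r_0}r^{-3}\,dr\sim\sigma^2/C(\varepsilon)^2$ on a slice. The height of that tail is only of order $\sigma^4\log(1/\sigma)$.
\item Caccioppoli cannot repair this. Near the rays it only gives $\int|\nabla V|^2\lesssim\int r^{-2}|V|^2$.
\item This is not an artefact of the estimate. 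Already for an exact Lawlor neck, the graphical part has a $\zeta$-excess of order $\sigma^2/C(\varepsilon)^2$, essentially its Dirichlet energy. It is cancelled by a deficit of the same order inside the neck.
\item Your boundary terms on $c_\pm$ are likewise of order $\sigma^2$. The flux identities of Claims~1 and~2 are first-order statements with errors $O(\varepsilon\sigma^2+\sqrt E)$, far too coarse to match them.
\item Inside the neck, first-order vanishing of the integrand on an interpolating chain only bounds it by displacement times tilt, $|W|\,|\nabla W|$. This integrates to $\sigma^3$, not $\sigma^4\log(1/\sigma)$.
\end{itemize}

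\emph{Two further problems.} First, an error of size $C\sqrt E$ is not $O(D^2+E)$. Second, Lemma~\ref{lem:common-slice} is only available when $E\le\rho_*^5$, $E\le\varepsilon_1D^2$ and $\rho$ is small, and it gives no description at energy-criterion stops. The proposition is needed for every $L$ satisfying (\ref{eqn:initial-closeness}); this is how Proposition~\ref{prop:discrete-lojasiewicz} uses it.

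\emph{The missing idea} is a single global homotopy, combined with an integration by parts on the closed slice. The paper builds a uniformly Lipschitz retraction $\Pi_s$ from a fixed neighbourhood of $\Lambda_s$ onto $\Lambda_s$ together with the short Reeb segments joining each pair $q_-,q_+$. It satisfies $|\Pi_s(x)-x|\le C\dist(x,\Lambda_s)$ and $(\Pi_s)_\#\Gamma_R=\Lambda_s$. Since $\Pi_s$ has rank at most one on the segments, $\Pi_s^*\zeta'=0$ for $\zeta'=(\iota_x\operatorname{Im}\Omega)|_{S^5}$. This also disposes of your worry about $q_+\ne q_-$.

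Let $H_t$ be the geodesic homotopy from $\Pi_s$ to the identity, and $\mathcal K_t\vartheta=\int_0^tH_\tau^*(\iota_{\dot H_\tau}\vartheta)\,d\tau$. The computation uses three identities: $d\zeta=-3\lambda\wedge\zeta'$; $H_t^*\zeta'=d\mathcal K_t\zeta'+\mathcal K_t(d\zeta')$, valid because $H_0^*\zeta'=0$; and $\partial_t(H_t^*\lambda)=d(\lambda(\dot H_t))+H_t^*(\iota_{\dot H_t}d\lambda)$. Integrating the exact terms by parts on the closed surface $\Gamma_R$ gives
\[
\int_{\Gamma_R}\zeta-\int_{\Lambda_s}\zeta
=3\int_{\Gamma_R}\lambda\wedge\mathcal K_1\zeta'
-3\int_0^1\int_{\Gamma_R}\bigl\{H_t^*(\iota_{\dot H_t}d\lambda)\wedge\mathcal K_t\zeta'
+\lambda(\dot H_t)\,\mathcal K_t(d\zeta')\bigr\}\,dt.
\]
Every term here is $O(|\dot H|^2)$ or $O(|\lambda|\,|\dot H|)$, with $|\dot H_t|\le C\dist(x,\mathbf C_s)$, and only the bound $|dH_t|\le C$ enters. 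Hence no derivative of the displacement is ever squared, no boundary terms appear, and no neck structure is needed. Young's inequality bounds the slice difference by $C\int_{\Gamma_R}(\dist^2+|\lambda|^2)$. Coarea over $R\in(5/4,7/4)$ and monotonicity then give $m(1)\le C(D^2+E)$.
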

	
	\begin{proof}
		We shall prove, for almost every $R\in(5/4,7/4)$, that
		\begin{equation}\label{eqn:excess-slice-bound}
			\left|\int_{\Gamma_R}\zeta-\int_{\Lambda_s}\zeta\right|
			\le C\int_{\Gamma_R}
			\bigl(\dist(x,\mathbf C_s)^2+|\lambda|^2\bigr)\dd\|\Gamma_R\|.
		\end{equation}
		By coarea,
		\[
		\int_{5/4}^{7/4}\int_{\Gamma_R}
		\bigl(\dist(x,\mathbf C_s)^2+|\lambda|^2\bigr)
		\dd\|\Gamma_R\|\,dR\le C(D^2+E).
		\]
		Selecting a good radius $R\in(5/4,7/4)$ and using
		(\ref{eqn:excess-slice-bound}) and monotonicity, we obtain
		$m(1)\le m(R)\le C(D^2+E)$.
		
		We may assume that $\Gamma_R$ is smooth. Put
		$\zeta'=(\iota_x\operatorname{Im}\Omega)|_{S^5}$. The argument uses
		a homotopy $H_t$, $0\le t\le1$, with the following properties on
		$\Gamma_R$:
		\[
		\begin{gathered}
			H_1=\mathrm{id},\qquad (H_0)_\#\Gamma_R=\Lambda_s,\qquad
			H_0^*\zeta'=0,\\
			|\dot H_t(x)|\le C\dist(x,\mathbf C_s),\qquad |dH_t|\le C.
		\end{gathered}
		\]
		The differential bounds hold almost everywhere, with a uniform constant.
		We shall obtain $H_t$ by short geodesic interpolation from a uniformly
		Lipschitz map $H_0$ to the identity; first we prove the estimate from
		these properties.
		
		\smallskip
		\noindent\emph{Step 1: compare the integrals.}
		For a differential form $\vartheta$, write
		\[
		\mathcal K_t\vartheta
		=\int_0^t H_\tau^*(\iota_{\dot H_\tau}\vartheta)\,d\tau.
		\]
		Since $H_0^*\zeta'=0$ on $\Gamma_R$, the homotopy formula gives
		\[
		H_t^*\zeta'=d\mathcal K_t\zeta'+\mathcal K_t(d\zeta'),\qquad
		\partial_t(H_t^*\lambda)
		=d\bigl(\lambda(\dot H_t)\bigr)
		+H_t^*(\iota_{\dot H_t}d\lambda).
		\]
		Applying Stokes to the currents defined by the Lipschitz homotopy,
		and using $d\zeta=-3\lambda\wedge\zeta'$, the two identities above,
		and $\mathcal K_0=0$, we obtain
		\[
		\begin{aligned}
			&\int_{\Gamma_R}\zeta-\int_{\Lambda_s}\zeta\\
			&=-3\int_0^1\int_{\Gamma_R}
			\left\{\lambda(\dot H_t)H_t^*\zeta'
			-H_t^*\lambda\wedge H_t^*(\iota_{\dot H_t}\zeta')\right\}\,dt\\
			&=3\int_0^1\int_{\Gamma_R}
			\Bigl\{\partial_t(H_t^*\lambda\wedge\mathcal K_t\zeta')
			-H_t^*(\iota_{\dot H_t}d\lambda)\wedge\mathcal K_t\zeta'
			-\lambda(\dot H_t)\mathcal K_t(d\zeta')\Bigr\}\,dt\\
			&=3\int_{\Gamma_R}\lambda\wedge\mathcal K_1\zeta'
			-3\int_0^1\int_{\Gamma_R}
			\left\{H_t^*(\iota_{\dot H_t}d\lambda)\wedge\mathcal K_t\zeta'
			+\lambda(\dot H_t)\mathcal K_t(d\zeta')\right\}\,dt.
		\end{aligned}
		\]
		The second equality uses integration by parts on $\Gamma_R$ and the
		product rule in $t$.
		Each factor involving $\dot H_t$ or $\mathcal K_t$ is bounded by
		$C\dist(x,\mathbf C_s)$. The time integral is therefore bounded by
		$C\int_{\Gamma_R}\dist(x,\mathbf C_s)^2\dd\|\Gamma_R\|$, and Young's
		inequality bounds the term at $t=1$ by the right-hand side of
		(\ref{eqn:excess-slice-bound}).
		
		\smallskip
		\noindent\emph{Step 2: construct the homotopy.}
		Away from the points $q_-,q_+$ corresponding to the double points of
		$\Lambda$, we use the usual projection onto the nearby sheet.
		Near $q_-$ and $q_+$, we join the two projections continuously
		by allowing the image to move along the short Reeb segment between
		$q_-$ and $q_+$. When the two points coincide, this segment shrinks to
		their intersection point. We shall construct a uniformly Lipschitz
		retraction $\Pi_s$ from a fixed neighbourhood of $\Lambda_s$ onto this
		union of sheets and segments. Since it fixes $\Lambda_s$, it satisfies
		\[
		|\Pi_s(x)-x|\le C\dist(x,\Lambda_s).
		\]
		
		Let $\beta\ge0$ be the length of the short Reeb segment between
		$q_-$ and $q_+$. Choose local coordinates
		$(x,y,\tau)\in\R^2\times\R^2\times\R$ in which the two local sheets
		of $\Lambda_s$ are the local coordinate planes $\{(x,0,-\beta/2)\}$
		and $\{(0,y,\beta/2)\}$, and the Reeb segment is
		$\{(0,0,\tau):|\tau|\le\beta/2\}$.
		Set $w=\tau+|y|-|x|$ and define
		\[
		\Pi_s(x,y,\tau)=
		\begin{cases}
			\left(\dfrac{-2w-\beta}
			{\max\{\sqrt{|x|^2+|y|^2},-2w-\beta\}}x,0,-\beta/2\right),
			& w<-\beta/2,\\[8pt]
			(0,0,w),& |w|\le\beta/2,\\[4pt]
			\left(0,\dfrac{2w-\beta}
			{\max\{\sqrt{|x|^2+|y|^2},2w-\beta\}}y,\beta/2\right),
			& w>\beta/2.
		\end{cases}
		\]
		The three cases take values in the first plane, the joining segment,
		and the second plane, respectively. In particular,
		\[
		\Pi_s(x,y,\tau)=
		\begin{cases}
			(x,0,-\beta/2),& |x|>3\bigl(|y|+|\tau+\beta/2|\bigr),\\
			(0,y,\beta/2),& |y|>3\bigl(|x|+|\tau-\beta/2|\bigr).
		\end{cases}
		\]
		This is a retraction onto the two planes and their joining segment,
		with Lipschitz constant independent of $\beta$.
		
		Away from fixed neighbourhoods of these points, extend $\Pi_s$
		by a tubular projection onto $\Lambda_s$. On each overlap, both maps
		take values in the same embedded portion of $\Lambda_s$. Interpolate
		between them in local coordinates on this surface, using fixed cutoffs.
		This gives a retraction
		on a neighbourhood of fixed width around all of $\Lambda_s$, with Lipschitz
		constant uniform in $s$.
		
		By (\ref{eqn:initial-closeness}), $\Gamma_R$ is supported in this fixed neighbourhood.
		It is easy to see that
		\[
		(\Pi_s)_\#\Gamma_R=\Lambda_s.
		\]
		On the sheets $\zeta'$ vanishes, while on the joining segments
		$\Pi_s$ has rank at most one almost everywhere. Consequently
		$\Pi_s^*\zeta'=0$ almost everywhere on $\Gamma_R$.
		
		The short geodesic homotopy from $\Pi_s$ to the identity has the required
		endpoint properties and satisfies
		\[
		|\dot H_t(x)|\le C\dist(x,\Lambda_s)
		\le C\dist(x,\mathbf C_s),\qquad |dH_t|\le C.
		\]
		Thus it has all the properties used in Step~1.
	\end{proof}

	\section{The gluing obstruction}\label{sect:gluing-obstruction}
	
	Work on $\Gamma_{R_0}$, where $R_0$ is the regular radius from
	Proposition~\ref{prop:stopping}.
	Lemma~\ref{lem:common-slice} gives extracted necks whose
	largest stopping radius is comparable to $\rho$, with
	$\rho\ge C^{-1}D^{2/\gamma}$.
	We shall prove the discrete \L{}ojasiewicz inequality in
	Proposition~\ref{prop:discrete-lojasiewicz} by combining the loop periods,
	flux balance and variation of the Reeb chord actions with the analytic
	gradient inequality.
	
	Orient the edges of $G(\Lambda)$ and write
	$a_e,\sigma_e,q_{e,\pm}(s),c_{e,\pm},P_e(s)$ for the corresponding
	local quantities. Let $G'=(V,E')$ be the subgraph of $G(\Lambda)$
	containing all vertices and exactly the edges $e$ with $\sigma_e>C(\varepsilon)\rho_*$,
	for a sufficiently large $C(\varepsilon)$, to be chosen in the proof.
	The short Reeb chord from $q_{e,-}(s)$ to $q_{e,+}(s)$ has signed action
	$\beta_e(s)$, so $q_{e,+}(s)=e^{i\beta_e(s)}q_{e,-}(s)$.
	Here $\beta_e(0)=0$, but $\beta_e(s)$ need not vanish.
	The coefficient comparison and Lemma~\ref{lem:common-slice} give
	\[
	\begin{aligned}
		|a_e|&\asymp\sigma_e^2,\quad |\beta_e(s)|\le C\sigma_e
		&&(e\in E'),\\
		|a_e|&\le C(\varepsilon)\rho_*^2 &&(e\notin E').
	\end{aligned}
	\]
	For $e\notin E'$, the stopping construction gives
	$\sigma_e\le C(\varepsilon)\rho_*$.
	The separation estimate in Claim 2 after Lemma~\ref{lem:common-slice}
	and $|q_{e,+}-q_{e,-}|=2|\sin(\beta_e(s)/2)|$ for the short Reeb chord
	therefore give
	\begin{equation}\label{eqn:small-edge-action}
		|a_e|>C\sqrt E
		\quad\Longrightarrow\quad
		|q_{e,+}-q_{e,-}|\le C(\varepsilon)\sigma_e\le C(\varepsilon)\rho_*
		\quad\Longrightarrow\quad |\beta_e(s)|\le C(\varepsilon)\rho_*.
	\end{equation}
	Proposition~\ref{prop:stopping} also gives
	$\max_{e\in E'}|a_e|\asymp\rho^2$.
	
	For an oriented simple cycle $\gamma_j$ in $G(\Lambda)$, let
	$\xi_e(\gamma_j)$ be $1$, $-1$, or $0$ according as $\gamma_j$ traverses
	$e$ with its orientation, against it, or not at all.
	
	\subsection{The logarithmic term in loop periods}
	\label{sect:logarithmic-loop-periods}
	
	On $\Gamma_{R_0}$, for each extracted edge $e$, orient the neck
	and its Reeb chord $c_s$ from the minus branch to the plus branch,
	so $\beta_e(s)=\int_{c_s}\lambda$.
	An oriented simple cycle $\gamma_j$ in $G'$ is represented by a loop
	$\gamma$ in $\Gamma_{R_0}$ crossing its Lawlor necks and joining their
	ends in the graphical region, avoiding all other puncture neighbourhoods.
	
	\begin{proposition}[Loop periods]\label{prop:direct-period}
		There is $C>0$ such that every oriented simple cycle $\gamma_j$ in $G'$ has a representative
		$\gamma$ as above satisfying
		\begin{equation}\label{eqn:loop-period}
			\int_\gamma\lambda
			=\sum_{e\in E'} \xi_e(\gamma_j)
			\{\beta_e(s)+2a_e\log|a_e|\}+e_\gamma,
		\end{equation}
		where
		\[
		\begin{split}
			|e_\gamma|
			&\le C\varepsilon\rho^2\log(1/\rho),\\
			\left|\int_\gamma\lambda\right|
			&\le C(\varepsilon)\rho_*^{-1/2}\sqrt E.
		\end{split}
		\]
	\end{proposition}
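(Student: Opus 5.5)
The plan is to prove the two estimates separately. The upper bound on $|\int_\gamma\lambda|$ comes from the interior Liouville estimate. The expansion \eqref{eqn:loop-period} comes from splitting $\gamma$ into graphical arcs and neck crossings, and computing $\lambda$ on each piece with the contact coordinates $\Psi_s$ and the Lawlor neck model of Lemma~\ref{lem:common-slice}.

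\emph{Choice of representative and the upper bound.} For each $e\in E'$ with $\gamma_j$ passing through $e$, the loop crosses the extracted neck inside $\bigcup_\pm B_{C(\varepsilon)\sigma_e}(R_0q_{e,\pm})$. It does so along a path in the model Lawlor neck $\times\R$ that starts on the circle $c_{e,-}$ and ends on $c_{e,+}$ (with orientation $\xi_e(\gamma_j)$). Its length is $O(C(\varepsilon)\sigma_e)$, and it stays at distance $\gtrsim\sigma_e$ from the core.
\begin{itemize}
\item On the component $\Sigma_v$ between consecutive necks, $\gamma$ runs through the graphical region of $\Gamma_{R_0}$ at distance $\ge r_0$ from all punctures, except for radial segments from $r=C(\varepsilon)\sigma_e$ out to $r_0$.
\item Since $\Sigma_v$ is connected and the punctures are finitely many points, such arcs exist with uniformly bounded length.
\item Along every piece the regular part of $L$ is a controlled graph, either by Proposition~\ref{prop:inductive-extension} or by the neck graph of Lemma~\ref{lem:common-slice}. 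Interior estimates for the harmonic form $\lambda|_L$ then give \eqref{eqn:liouville-interior}, so $|\lambda|\le Cr^{-3/2}\sqrt E$ at scale $r$.
\item Integrating over a radial segment gives $\int_{C(\varepsilon)\sigma}^{r_0}r^{-3/2}\,dr\le C(\varepsilon)\sigma^{-1/2}$. The neck crossing gives $C(\varepsilon)\sigma\cdot\sigma^{-3/2}$.
\item Since $\sigma_e>C(\varepsilon)\rho_*$, both are at most $C(\varepsilon)\rho_*^{-1/2}\sqrt E$.
\end{itemize}

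\emph{The expansion.} On a graphical arc, write the slice as $\Psi_s(p,\alpha_{R_0},u_{R_0})$. Then $\Psi_s^*\lambda=2(dz-\xi\,dp)$, and by \eqref{eqn:contact-equation} the integral over the arc equals $2[u_{R_0}]_{\rm start}^{\rm end}-2\int\alpha_{R_0}$. So the graphical arcs contribute, up to $O(\sqrt E)$, the boundary values of the potential at the circles $r=C(\varepsilon)\sigma_e$, and these telescope across the arcs of each component.

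By Proposition~\ref{prop:exterior-scalar} and \eqref{eqn:neck-coefficient}, with coefficients $\pm a_e$ on the $\pm$ ends (Claim~2), the boundary value at such a circle is $\pm a_e\log r$ plus a constant matched to the neck. The errors are $\varepsilon\sigma_e^2\log(1/\rho)+C\rho^2 r\log(1/\rho)$, since integrating $r^{-1}$ times the error in \eqref{eqn:neck-coefficient} from $C(\varepsilon)\sigma$ to $r_0$ costs a factor $\log(1/\rho)$. The average-zero part of $u$ contributes $O(\rho^2)$.

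On the neck piece, I would compute $\int\lambda$ exactly for the model $Z_1Z_2=a+ib$, translated by the Reeb offset $\beta_e$ and dilated by $R_0$. For the path from the circle of radius $r$ on the minus sheet to the circle of radius $r$ on the plus sheet, I expect
\[
\beta_e+2a_e\log|a_e|-2a_e\log r+O(|b|)
\]
plus the contribution of the constant terms. This uses the Lagrangian potential $\Ree(\ldots)$ of the Lawlor neck, whose ends differ by a logarithmic shift $a\log|a|$ on each side, together with the $z$-offset $\beta_e$ between the two sheets. The $-2a_e\log r$ cancels the $\pm a_e\log r$ boundary terms from the adjoining graphical arcs. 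The bound $|b|\le C\varepsilon\sigma^2$ from Lemma~\ref{lem:common-slice}, and the $C^2_\sigma$ closeness $\varepsilon\sigma$ of $L$ to the model, contribute $C\varepsilon\sigma_e^2\log(1/\rho)$.

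\emph{Summing.} Summing over the edges of $\gamma_j$, whose number is bounded by $|E|$, and using $\sigma_e\le C\rho$ gives \eqref{eqn:loop-period} with $|e_\gamma|\le C\varepsilon\rho^2\log(1/\rho)$. This requires first choosing $C(\varepsilon)$ large and then $\rho_0$ small.

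\emph{Main obstacle.} I expect the hardest step to be the exact neck computation. It must be matched against the graphical potential so that the constant terms (the additive normalization of $u$ on each sheet relative to $\Psi_s$) cancel except for $\beta_e$ and $2a_e\log|a_e|$. It must also absorb the $O(\sigma)$ $\mathrm{SU}(3)$ rotation used to align the two planes. I would handle this by comparing $\Psi_s$ with flat Darboux coordinates on the tangent planes at $q_{e,\pm}$ to accuracy $O(\sigma^2)$. The primitives then agree up to the chord action, and the logarithmic constant is read off from the explicit potential of the conic.
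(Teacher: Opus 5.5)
\textbf{Verdict: genuine gap.} Your choice of representative and your bound $|\int_\gamma\lambda|\le C(\varepsilon)\rho_*^{-1/2}\sqrt E$ are exactly as in the paper. The expansion step, however, is wrong.

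\textbf{The graphical arcs.} From $Y_{R_0}^*\lambda=2(du_{R_0}-\alpha_{R_0})$ you conclude that a graphical arc contributes $2[u_{R_0}]$ up to $O(\sqrt E)$. That would require $\int\alpha_{R_0}=O(\sqrt E)$. In fact $\alpha_{R_0}=du_{R_0}-\tfrac12Y_{R_0}^*\lambda$, so $\int\alpha_{R_0}\approx[u_{R_0}]$, and the arc contributes only $O(\sigma^{-1/2}\sqrt E)$. That is precisely what your own first paragraph proves. Along a radial segment near an extracted neck, $[u_{R_0}]\approx\pm a_e\log(C(\varepsilon)\sigma/r_0)$ has size $\sigma^2\log(1/\sigma)$, which is far larger than $\sigma^{-1/2}\sqrt E\le\rho_*^2$. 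So your two claims are incompatible.

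Consequently no logarithmic terms come from the arcs, and the cancellation of $-2a_e\log r$ against ``boundary terms'' has nothing to cancel against. Even on your own terms the factors do not match: $2[u_{R_0}]$ over the two radial segments adjacent to a neck has a $\log r$ coefficient of modulus $4|a_e|$, not $2|a_e|$.

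\textbf{The neck crossing.} The entire expansion must therefore come from the neck crossing, and there you only state an expected formula, with unexplained ``constant terms''. A direct comparison with the Lawlor model cannot reach the required accuracy. The model is only $\varepsilon\sigma$-close in $C^2_\sigma$, so $\lambda=\iota_x\omega$ with $|x|\approx1$ is known pointwise only to $O(\varepsilon)$. On a path of length $C(\varepsilon)\sigma$ this gives an error $O(\varepsilon C(\varepsilon)\sigma)$, which is much larger than $\varepsilon\rho^2\log(1/\rho)$.

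\textbf{The missing idea: a Stokes argument that makes the model computation unnecessary.}
\begin{enumerate}
\item Let $x_\pm$ be the endpoints of $\gamma_{\mathrm{core}}$ on the circles $r=C(\varepsilon)\sigma$, with $z$-coordinates $u_\pm$, and let $\bar x_\pm\in\Lambda_s$ be their projections to the zero section.
\item The fibre segments $\bar x_\pm\to x_\pm$ have $dp=0$, so their $\lambda$-integrals are $2u_\pm$.
\item The path $\bar x_-\to\bar x_+$ along the Legendrian branches and the Reeb chord has integral $\beta_e(s)$.
\item Together with $\gamma_{\mathrm{core}}$ these form a closed loop of length $O(C(\varepsilon)\sigma)$, using $|\beta_e|\le C\sigma$. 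Coning it gives a spanning surface of area $O(C(\varepsilon)\sigma^2)$, so $d\lambda=2\omega$ yields $\int_{\gamma_{\mathrm{core}}}\lambda=2u_+-2u_-+\beta_e(s)+O(C(\varepsilon)\sigma^2)$.
\item Then compute $u_\pm$ by integrating \eqref{eqn:neck-coefficient}, with $\pm a_e$ on the two ends, inward from $r_0$. There $|u_{R_0}|\le C\rho^2$ by the exterior bound of Proposition~\ref{prop:exterior-scalar}. This gives $2u_+-2u_-=4a_e\log(C(\varepsilon)\sigma/r_0)+O(\varepsilon\sigma^2\log(1/\sigma)+\rho^2)$.
\item Since $|a_e|\asymp\sigma^2$, this equals $2a_e\log|a_e|+O(C(\varepsilon)\sigma^2)$.
\end{enumerate}
So the term $2a_e\log|a_e|$ comes from $|a_e|\asymp\sigma_e^2$ together with the exterior normalisation of $u_{R_0}$, not from an explicit potential of the conic. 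No matching of additive constants or of the $\mathrm{SU}(3)$ rotation is needed. Combined with your $O(\rho^2)$ bound on the graphical arcs, this gives \eqref{eqn:loop-period}.
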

	
	\begin{proof}
		\emph{Step 1: choose the loop and bound its period.}
		For each vertex of $\gamma_j$, choose a path in the corresponding component
		of $\Sigma_{r_0}$ joining the two boundary circles associated with the
		adjacent edges. Extend each end along a curve of constant $\theta$ down
		to $r=C(\varepsilon)\sigma$, using the stopping radius of that neck.
		Map these paths to $\Gamma_{R_0}$ by the graphical parametrization and
		join each pair of endpoints through the corresponding Lawlor neck.
		Write $\gamma_{\mathrm{core}}$ for this segment between the two circles
		$r=C(\varepsilon)\sigma$, and $\gamma_{\mathrm{in}}$ and
		$\gamma_{\mathrm{out}}$ for the adjacent portions in the graphical annuli
		$C(\varepsilon)\sigma<r<r_0$ (Figure~\ref{fig:neck-period}). The resulting loop $\gamma$
		inherits its orientation from $\gamma_j$. The paths in $\Sigma_{r_0}$
		can be chosen with uniformly bounded length.
		By (\ref{eqn:liouville-interior}),
		\[
		\int_{\gamma_{\mathrm{in}}}|\lambda|
		\le C\sqrt E\int_{C(\varepsilon)\sigma}^{r_0}r^{-3/2}\,dr
		\le C\sigma^{-1/2}\sqrt E.
		\]
		The same bound holds on $\gamma_{\mathrm{out}}$. On $\gamma_{\mathrm{core}}$,
		interior estimates give the bound with constant $C(\varepsilon)$;
		this segment has length $O(C(\varepsilon)\sigma)$.
		On $\Sigma_{r_0}$ the
		bound is $C\sqrt E$. There are finitely many edges and
		$\sigma>C(\varepsilon)\rho_*$, so summing these bounds proves the second estimate.
		Moreover, $E\le\rho_*^5$ gives
		$\rho_*^{-1/2}\sqrt E\le\rho_*^2\le\rho^2$.
		Thus the total contribution of the graphical paths is $O(\rho^2)$.
		
		\smallskip
		\noindent\emph{Step 2: compute the integral along $\gamma_{\mathrm{core}}$.}
		Fix an extracted Lawlor neck corresponding to $e\in E'$, with stopping
		radius $\sigma$, and work near its ends in the coordinates
		$(p,\xi,z)$ of Subsection~\ref{sect:graphical-radial-slices}.
		Let $x_-,x_+$ be the endpoints of $\gamma_{\mathrm{core}}$ on the two
		circles $r=C(\varepsilon)\sigma$, with $z$-coordinates $u_\pm$.
		Let $\bar x_\pm\in\Lambda_s$ be their projections to the zero section.
		Join $\bar x_\pm$ to $x_\pm$ keeping $p$ fixed. Since $dp=0$,
		the $\lambda$-integral along each segment is $2u_\pm$.
		The path from $\bar x_-$ to $\bar x_+$ along the Legendrian branches
		and the Reeb chord has integral $\beta_e(s)$.
		Together with $\gamma_{\mathrm{core}}$, these paths form a closed loop
		of length $O(C(\varepsilon)\sigma)$. Coning this loop to $x_-$ gives a spanning
		surface of area $O(C(\varepsilon)\sigma^2)$. The Stokes formula and $d\lambda=2\omega$
		therefore give
		\[
		\int_{\gamma_{\mathrm{core}}}\lambda
		=2u_+-2u_-+\beta_e(s)+O(C(\varepsilon)\sigma^2).
		\]
		By the comparison following Lemma~\ref{lem:common-slice},
		(\ref{eqn:neck-coefficient}) holds with $\pm a_e$ on the corresponding
		$\pm$ ends. Since $|a_e|\asymp\sigma^2\le C\rho^2$,
		\[
		4a_e\log(C(\varepsilon)\sigma/r_0)
		=2a_e\log|a_e|+O(C(\varepsilon)\sigma^2)
		=2a_e\log|a_e|+O(C(\varepsilon)\rho^2).
		\]
		Integrating (\ref{eqn:neck-coefficient}) from $r_0$ to $C(\varepsilon)\sigma$ and
		using the bounds for $u_{R_0}$ and for $u_{R_0}-\bar u_{R_0}$
		in Proposition~\ref{prop:exterior-scalar}, we obtain
		\[
		\begin{aligned}
			\bigl|2u_+-2u_--2a_e\log|a_e|\bigr|
			&\le C\varepsilon |a_e|\log(1/|a_e|)+C(\varepsilon)\rho^2\\
			&\le C\varepsilon\rho^2\log(1/\rho).
		\end{aligned}
		\]
		In the last inequality, we take $\rho$ sufficiently small depending on
		$\varepsilon$ to absorb $C(\varepsilon)\rho^2$ into $\varepsilon\rho^2\log(1/\rho)$.
		Summing the resulting core integrals with their traversal signs and adding
		the $O(\rho^2)$ graphical contribution from Step 1 gives
		(\ref{eqn:loop-period}) and the stated error bound.
	\end{proof}
	
	\begin{figure}[t]
		\centering
		\begin{tikzpicture}[x=1.25cm,y=1cm,>=Latex,font=\small,
			directed/.style={thick,postaction={decorate},
				decoration={markings,mark=at position .55 with {\arrow{Latex}}}}]
			\foreach \s in {-1,1}{
				\begin{scope}[yscale=\s]
					\path[fill=black!9,draw=black!45,line width=.4pt]
					(-4,.75).. controls (-3.45,.4) and (-2.9,.12)..(-2.5,.12)
					.. controls (-1.25,1.47) and (1.25,1.47)..(2.5,.12)
					.. controls (2.9,.12) and (3.45,.4)..(4,.75)
					--(4,1.08)
					.. controls (3.45,.73) and (2.9,.45)..(2.5,.45)
					.. controls (1.25,1.8) and (-1.25,1.8)..(-2.5,.45)
					.. controls (-2.9,.45) and (-3.45,.73)..(-4,1.08)--cycle;
				\end{scope}
			}
			\path[even odd rule,fill=blue!9,draw=blue!45!black,line width=.45pt]
			(-2.85,1.1).. controls (-1.3,2.33) and (1.3,2.33)..(2.85,1.1)
			.. controls (3.7,.67) and (3.7,-.67)..(2.85,-1.1)
			.. controls (1.3,-2.33) and (-1.3,-2.33)..(-2.85,-1.1)
			.. controls (-3.7,-.67) and (-3.7,.67)..(-2.85,1.1)--cycle
			(-2.45,.8).. controls (-1.1,1.83) and (1.1,1.83)..(2.45,.8)
			.. controls (3.1,.53) and (3.1,-.53)..(2.45,-.8)
			.. controls (1.1,-1.83) and (-1.1,-1.83)..(-2.45,-.8)
			.. controls (-3.1,-.53) and (-3.1,.53)..(-2.45,.8)--cycle;
			\draw[directed] (-2.65,.95).. controls (-1.2,2.08) and (1.2,2.08)..(2.65,.95);
			\draw[directed] (2.65,.95).. controls (3.4,.6) and (3.4,-.6)..(2.65,-.95);
			\draw[directed] (2.65,-.95).. controls (1.2,-2.08) and (-1.2,-2.08)..(-2.65,-.95);
			\draw[directed] (-2.65,-.95).. controls (-3.4,-.6) and (-3.4,.6)..(-2.65,.95);
			\node[above] at (-.55,1.8) {$\gamma$};
			\node[above,blue!45!black] at (0,2.14) {$\Gamma_{R_0}$};
			\node at (0,1.05) {$\Lambda_s$};
			\node at (0,-1.05) {$\Lambda_s$};
			\node[above] at (1.85,1.77) {$\gamma_{\mathrm{in}}$};
			\node[below] at (1.85,-1.77) {$\gamma_{\mathrm{out}}$};
			\draw[thin] (3.33,0)--(3.72,0);
			\node[right] at (3.72,0) {$\gamma_{\mathrm{core}}$};
			\fill (2.65,.95) circle (1.2pt) (2.65,-.95) circle (1.2pt);
			\node[above right] at (2.65,.95) {$x_-$};
			\node[below right] at (2.65,-.95) {$x_+$};
			\draw[densely dotted,-{Latex[length=1.1mm,width=.8mm]},line width=.5pt]
			(2.82,.32)--(2.65,.95);
			\draw[densely dotted,-{Latex[length=1.1mm,width=.8mm]},line width=.5pt]
			(2.82,-.32)--(2.65,-.95);
			\node[right] at (3.43,.64) {$2u_-$};
			\node[right] at (3.43,-.64) {$2u_+$};
			\draw[thin,black!45] (2.73,.64)--(3.4,.64);
			\draw[thin,black!45] (2.73,-.64)--(3.4,-.64);
			\fill (2.82,.32) circle (1pt) (2.82,-.32) circle (1pt);
			\node[inner sep=1pt] at (1.93,.54) {$\bar x_-$};
			\node[inner sep=1pt] at (1.93,-.54) {$\bar x_+$};
			\draw[thin,black!45] (2.13,.5)--(2.82,.32);
			\draw[thin,black!45] (2.13,-.5)--(2.82,-.32);
			\draw[black!65,thin]
			(2.82,.32).. controls (2.70,.14) and (2.58,.12)..(2.5,.12);
			\draw[black!65,thin]
			(2.5,-.12).. controls (2.58,-.12) and (2.70,-.14)..(2.82,-.32);
			\draw[densely dashed,-{Latex[length=1.1mm,width=.8mm]},line width=.4pt]
			(2.5,.12)--(2.5,-.12);
			\draw[densely dashed,-{Latex[length=1.1mm,width=.8mm]},line width=.4pt]
			(-2.5,-.12)--(-2.5,.12);
			\fill (2.5,.12) circle (.9pt) (2.5,-.12) circle (.9pt);
			\fill (-2.5,.12) circle (.9pt) (-2.5,-.12) circle (.9pt);
			\node[left,inner sep=2pt] at (2.5,0) {$c_s$};
			\node at (0,0) {$\displaystyle\int_{c_s}\lambda=\beta_e(s)$};
		\end{tikzpicture}
		\caption{The loop $\gamma$ on $\Gamma_{R_0}$ (blue) and the comparison
			surface $\Lambda_s$ (grey). The endpoints $x_\pm$ lie at
			$r=C(\varepsilon)\sigma$. The dotted paths have $\lambda$-integrals
			$2u_\pm$; the short dashed Reeb chords are not part of $\gamma$.}
		\label{fig:neck-period}
	\end{figure}
	
	\subsection{Reeb chord actions and flux balance}
	\label{sect:variation-chord-actions}
	
	For each edge $e$, let $p_{e,\pm}\in\Sigma$ be the corresponding
	preimages of the points
	$q_{e,\pm}(s)=F_s(p_{e,\pm})\in\Lambda_s$.
	
	\begin{proposition}[Action variation and flux balance]
		\label{prop:chord-action-variation}
		A tangent vector $\dot s\in T_s\mathcal M$ represented by the Jacobi
		normal field $2hJF_s+J\nabla h$ satisfies
		\begin{equation}\label{eqn:action-derivative}
			d\beta_e(s)[\dot s]=2\{h(p_{e,+})-h(p_{e,-})\}.
		\end{equation}
		The coefficients $a_e$ satisfy
		\begin{equation}\label{eqn:chord-action-balance}
			\left\|\sum_{e\in E}a_e\,d\beta_e(s)\right\|_{T_s^*\mathcal M}
			\le C(\rho^4\log(1/\rho)+\sqrt E).
		\end{equation}
		At every vertex $v$,
		\begin{equation}\label{eqn:matrix-flux}
			\left\|\sum_{e\text{ enters }v}a_e(P_e(s)-I/3)
			-\sum_{e\text{ leaves }v}a_e(P_e(s)-I/3)\right\|
			\le C(\rho^4\log(1/\rho)+\sqrt E).
		\end{equation}
	\end{proposition}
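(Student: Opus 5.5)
The plan is to prove (\ref{eqn:action-derivative}) by a contact-Hamiltonian computation, and then to derive (\ref{eqn:chord-action-balance}) and (\ref{eqn:matrix-flux}) from a single Green-identity argument for the slice potential $u_{R_0}$, tested against smooth Jacobi functions on $\Sigma$.

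\emph{Step 1: the action derivative.} Let $\dot s$ be represented by the normal field $X=2hJF_s+J\nabla h$. This is the contact vector field with contact Hamiltonian $\lambda(X)=2h$, since $\lambda(Jx)=|x|^2=1$ on $S^5$. Move the chord endpoints $q_{e,\pm}(s)=F_s(p_{e,\pm})$ with $p_{e,\pm}\in\mathcal P$ fixed. Any tangential reparametrization needed to keep the projected branches intersecting contributes nothing to the variation of $\int_{c_s}\lambda$, because $\lambda$ vanishes on the Legendrian branches. The chord $c_s$ is a Reeb arc $t\mapsto e^{it}q_{e,-}$. The first variation of the action of a Reeb chord whose endpoints ride on a Legendrian isotopy is the difference of the contact Hamiltonian at the endpoints, since the interior term $\int_{c_s} \iota_{\dot c}d\lambda$ vanishes along the Reeb flow. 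The same conclusion follows from differentiating $q_{e,+}=e^{i\beta_e}q_{e,-}$ and pairing with $Jq_{e,+}$. This gives $d\beta_e[\dot s]=\lambda(X)(q_{e,+})-\lambda(X)(q_{e,-})=2\{h(p_{e,+})-h(p_{e,-})\}$.

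\emph{Step 2: Green identity on each component.} Fix a smooth Jacobi function $h$ on $\Sigma_v$ with $(\Delta_\Sigma+6)h=0$, normalized in $C^2$. On $\Sigma_v$ with the disks $B^\Sigma_{r}(p_{e,\pm})$ removed, apply Green's identity to $u=u_{R_0}$ and $h$:
\[
\int(h(\Delta+6)u-u(\Delta+6)h)=\sum_{\text{circles}}\int(h\partial_\nu u-u\partial_\nu h)\,ds .
\]
By (\ref{eqn:potential-equation}), $(\Delta+6)u$ equals the quadratic terms plus $f$. Proposition~\ref{prop:exterior-scalar} gives $|du|+r|D^2u|\le C(\sigma^2/r+\rho^2)$ and $|u|\le C\rho^2\log(1/\rho)$. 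Hence the quadratic terms integrate to $O(\rho^4\log(1/\rho))$ over the region $r>\sigma$, while $f$ contributes $O(\sqrt E)$ by (\ref{eqn:potential-equation-bounds}) and the bound $\int r^{-5/2}\sqrt E\cdot\sigma^{1/2}\ldots$ used in that proof.

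On the circle about $p_{e,+}$, decompose $u=\bar u+(u-\bar u)$ and $h=h(p)+O(r)$. The logarithmic part $r\bar u'\approx a_e$ from (\ref{eqn:neck-coefficient}) contributes $2\pi a_e h(p_{e,+})$. The remaining terms are $u\,\partial_\nu h=O(r\rho^2\log)$ and the mean-zero part paired with $h-h(p)$. Both tend to zero if I let the circle radius decrease, using that the flux is $r$-independent up to the interior error just bounded. The $-a_e$ end gives $-2\pi a_e h(p_{e,-})$. Claim~1 and (\ref{eqn:flux-coefficient}) make the two ends consistent up to $O(\sqrt E)$, and control the non-extracted edges: $|a_e|\le C\sqrt E$ when the sheets are separated. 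Summing over all vertices then gives
\[
\Bigl|\sum_e a_e\{h(p_{e,+})-h(p_{e,-})\}\Bigr|\le C(\rho^4\log(1/\rho)+\sqrt E).
\]
By Step~1 this is (\ref{eqn:chord-action-balance}), since the Jacobi fields span $T_s\mathcal M$ by integrability.

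\emph{Step 3: the matrix flux.} For (\ref{eqn:matrix-flux}), apply the same identity on a single component $\Sigma_v$ with $h=\langle HF_s,F_s\rangle$ for trace-free Hermitian $H$. This $h$ is a Jacobi function, namely the moment map of the $SU(3)$ Killing field $JHx$, so that $3(h\partial_\nu u-u\partial_\nu h)$ is the linear part of $\eta_H$ as in Claim~2. At a puncture, $h(p_{e,\pm})=\langle Hq_{e,\pm},q_{e,\pm}\rangle=\operatorname{tr}(H(P_e-I/3))$. The sign is $+$ at the plus end, which is where $e$ enters $v$, and $-$ at the minus end. Hence $\operatorname{tr}\bigl(H\cdot(\text{the bracketed matrix})\bigr)$ is bounded by $C\|H\|(\rho^4\log(1/\rho)+\sqrt E)$ for every trace-free $H$. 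Since the bracketed matrix is Hermitian and trace-free, duality gives its norm bound.

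\emph{Main obstacle.} The delicate step is the exact identification of the circle fluxes with $a_e h(p_{e,\pm})$ without incurring the $\varepsilon\sigma^2$ error from (\ref{eqn:neck-coefficient}), which is too large for the stated bound. My approach is not to evaluate at $r=C(\varepsilon)\sigma$. Instead I would define the coefficient directly by flux, as in Claim~2 with $H=P_e-I/3$, and use the $r$-independence of the Green boundary term modulo the $O(\rho^4\log(1/\rho)+\sqrt E)$ interior error. The discrepancy for general $h$ then comes only from $h-h(p)=O(r)$ times $|du|\lesssim\sigma^2/r$, which is $O(r\sigma^2)$ and can be pushed into the allowed error by choosing $r$ small.
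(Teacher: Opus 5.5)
Your Step~1 matches the paper's argument, and the duality at the end of your Step~3 matches the paper's Step~4. The gap is in Step~2, and the fix in your last paragraph does not reach the error $\rho^4\log(1/\rho)+\sqrt E$.

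Near a puncture you integrate against $h$ with $h(p)\ne0$ down to $r\sim\sigma$. With the bounds of Proposition~\ref{prop:exterior-scalar}, two error terms are already too large:
\begin{itemize}
\item[(i)] A quadratic term of type $du\#D^2u$, which the schematic equation allows, contributes about $\int_{C\sigma}^{r_0}\sigma^4r^{-3}\,r\,dr\sim\sigma^3$.
\item[(ii)] The source $f$ contributes $\int_{C\sigma}^{r_0}r^{-5/2}\sqrt E\,r\,dr\sim\sigma^{-1/2}\sqrt E$, not $O(\sqrt E)$.
\end{itemize}
More seriously, the first-order part $h-h(p)\approx dh(p)\cdot y$ pairs with the dipole part of $u$. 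If the neck centre is displaced by $y_0$, then $u\approx a_e\log|y-y_0|$ gives a circle term proportional to $a_e\langle y_0,dh(p)\rangle$, which is independent of $r$. This is a conserved flux of size up to $\sigma^3$. It does not decay as the circle shrinks, and the circle cannot go below $r\sim\sigma$ because the graph over $\Lambda_s$ ends in the neck. For the dominant edge $\sigma\asymp\rho$, so you only get $O(\rho^3)$. The remark after Proposition~\ref{prop:discrete-lojasiewicz} notes that this is not enough.

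These terms are large at each end of an edge but cancel between the two ends, across the neck. The paper captures this cancellation with an exact conservation law rather than the graphical expansion.
\begin{itemize}
\item[(a)] Near each puncture it replaces $h$ by an $\mathrm{SU}(3)$ moment map $h_l=\langle H_lF_s,F_s\rangle$ with the same $1$-jet. Green's formula is then applied only to $h-h_l=O(r^2)$, and the weight $r^2$ makes the interior and inner-circle errors $O(\rho^4\log(1/\rho)+\sqrt E)$.
\item[(b)] The remaining boundary term at $r=r_0$ is $\frac13\int\eta_{H_l}$. By Claim~1, the flux of $\eta_H$ through a neck is conserved up to $O(\sqrt E)$, because $d\eta_H=-3\iota_{JHx}\Ree\Omega$ is controlled on radial slices of $L$ by $|x^\perp|$.
\item[(c)] The key step is to choose the two matrices at each edge together. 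Transversality lets a single $H_-$ match $dh$ at both $p_-$ and $p_+$, together with the value $h(p_-)$. Then $h_-(p_+)=h(p_-)$, because $q_\pm$ lie on one Hopf fibre. Setting $H_+=H_-+\frac32\{h(p_+)-h(p_-)\}(P-I/3)$ matches the $1$-jet at $p_+$.
\item[(d)] Claim~1 removes $\int_{c_-}\eta_{H_-}+\int_{c_+}\eta_{H_-}$ up to $O(\sqrt E)$. The definition $\int_{c_+}\eta_{P-I/3}=4\pi a_e$ then gives exactly $6\pi a_e\{h(p_+)-h(p_-)\}$ for the edge.
\end{itemize}
This incurs no $\varepsilon\sigma^2$ loss and treats extracted and non-extracted edges in the same way. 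Your Step~2 needs this moment-map approximation and edge pairing.
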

	
	\begin{proof}
		\emph{Step 1: variation of the action.}
		Since the restriction of $d\lambda$ to $S^5$ annihilates the Reeb
		direction, variation of the chord integral leaves only the endpoint terms. Their tangential parts vanish
		on the Legendrian branches, giving (\ref{eqn:action-derivative}).
		
		\smallskip
		\noindent\emph{Step 2: flux balancing.}
		Take a Jacobi function $h$ with $\|h\|_{C^3(\Sigma)}\le1$, and let
		$\sigma_l$ be the cutoff radius chosen at the puncture $p_l\in\Sigma$.
		Rearranging the slice equation from Step 1 of
		Proposition~\ref{prop:exterior-scalar} and absorbing the nonlinear terms
		into $\mathcal N$ gives, on
		$\Sigma\setminus\bigcup_l B_{C\sigma_l}^{\Sigma}(p_l)$
		with $C$ sufficiently large and independent of $\varepsilon$,
		\[
		\Delta_\Sigma u_{R_0}+6u_{R_0}=\mathcal N,\qquad
		|\mathcal N|\le C\left(\frac{\sigma^4}{r^4}
		+\frac{\rho^4}{r^2}+r^{-5/2}\sqrt E\right).
		\]
		The graphical estimate
		$|u_{R_0}|+|du_{R_0}|+|D^2u_{R_0}|\le C(\sigma^2/r^2+\rho^2/r)$
		ensures smallness for $r\ge C\sigma$, since $\sigma\ge\rho^p$ and $p<2$.
		Squaring the right-hand side bounds the quadratic Taylor remainder and gives the first
		two terms in the bound for $\mathcal N$.
		The radial and Liouville-form terms give the last term,
		as in Step 1 of Proposition~\ref{prop:exterior-scalar}.
		In particular, $|\mathcal N|\le C(\rho^4+\sqrt E)$ on $\Sigma_{r_0}$.
		
		Let $H_l$ be trace-free Hermitian matrices with $\|H_l\|\le C$ such that
		$h_l=\langle H_lF_s,F_s\rangle$ agrees with $h$ to first order at $p_l$.
		Then
		$|h-h_l|+r|d(h-h_l)|\le Cr^2$.
		Apply Green's formula to the Jacobi function $h-h_l$ on
		$B_{r_0}^{\Sigma}(p_l)\setminus B_{C\sigma_l}^{\Sigma}(p_l)$.
		The graphical bounds give
		\begin{equation}\label{eqn:green-comparison-bound}
			\begin{aligned}
				&\left|\int_{r=r_0}
				\bigl((h-h_l)\partial_r u_{R_0}-u_{R_0}\partial_r(h-h_l)\bigr)\,ds\right|\\
				&\quad\le\left|\int_{B_{r_0}^{\Sigma}(p_l)\setminus B_{C\sigma_l}^{\Sigma}(p_l)}
				(h-h_l)\mathcal N\,d\mathrm{Area}\right|\\
				&\qquad+\left|\int_{r=C\sigma_l}
				\bigl((h-h_l)\partial_r u_{R_0}-u_{R_0}\partial_r(h-h_l)\bigr)\,ds\right|\\
				&\quad\le C\int_{C\sigma_l}^{r_0}
				\left(\frac{\sigma_l^4}{r}+\rho^4r+r^{1/2}\sqrt E\right)\,dr
				+C\bigl(\sigma_l^4+\rho^2\sigma_l^2\log(1/\sigma_l)\bigr)\\
				&\quad\le C\bigl(\rho^4\log(1/\rho)+\sqrt E\bigr),
			\end{aligned}
		\end{equation}
		where the last inequality uses $\rho_*\le\sigma_l\le C\rho$.
		Now apply Green's formula to $h$ on $\Sigma_{r_0}$:
		\[
		\sum_l\int_{r=r_0}(h\partial_r u_{R_0}-u_{R_0}\partial_r h)\,ds
		=-\int_{\Sigma_{r_0}}h\mathcal N\,d\mathrm{Area}
		=O(\rho^4+\sqrt E).
		\]
		By (\ref{eqn:green-comparison-bound}), replacing $h$ by $h_l$ in each boundary
		integral changes the sum by at most $C(\rho^4\log(1/\rho)+\sqrt E)$.
		Pull back the flux forms by the graphical parametrization and orient
		each circle as the boundary of $B_{r_0}^{\Sigma}(p_l)$.
		Then the $O(\rho^2)$ graphical bound at $r=r_0$, together with
		(\ref{eqn:contact-equation}) and (\ref{eqn:liouville-interior}), gives
		\[
		\int_{r=r_0}(h_l\partial_r u_{R_0}-u_{R_0}\partial_r h_l)\,ds
		=\frac13\int_{r=r_0}\eta_{H_l}+O(\rho^4+\sqrt E).
		\]
		Consequently,
		\begin{equation}\label{eqn:green-flux-bound}
			\left|\sum_l\int_{\partial B_{r_0}^{\Sigma}(p_l)}\eta_{H_l}\right|
			\le C\bigl(\rho^4\log(1/\rho)+\sqrt E\bigr).
		\end{equation}
		
		\smallskip
		\noindent\emph{Step 3: pair the endpoint fluxes.}
		We evaluate the flux integrals in (\ref{eqn:green-flux-bound}) in pairs,
		choosing the two matrices from Step 2 at each edge simultaneously.
		Fix an edge $e$, with $p_\pm\in\Sigma$ and $q_\pm=F_s(p_\pm)$, and recall
		$P=q_-q_-^*=q_+q_+^*$.
		Write $h_\pm=\langle H_\pm F_s,F_s\rangle$ for the two functions $h_l$.
		The requirements on $h_+$ and $h_-$ are
		\[
		h_\pm(p_\pm)=h(p_\pm),\qquad
		dh_-(p_\pm)=dh_+(p_\pm)=dh(p_\pm).
		\]
		Here the differentials are taken on $\Lambda_s$ and do not include
		the radial direction of the Lagrangian cone.
		The transversality of the two tangent planes in $(\mathbb C q_-)^\perp$
		allows us to choose $H_-$ so that $dh_-(p_\pm)=dh(p_\pm)$,
		while independently prescribing $h_-(p_-)=h(p_-)$.
		Since $q_\pm$ lie on the same Hopf fibre,
		$h_-(p_+)=h(p_-)$.
		The function $\frac32\langle(P-I/3)F_s,F_s\rangle$ has value $1$
		and zero differential at $p_\pm$. Hence the choice
		\[
		H_+=H_-+\frac32\{h(p_+)-h(p_-)\}(P-I/3)
		\]
		satisfies the stated requirements.
		The uniform transversality of the two tangent planes gives
		$\|H_-\|+\|H_+\|\le C$, with $C$ independent of $s$.
		
		By Claim 1 in the flux interpretation following
		Lemma~\ref{lem:common-slice}, moving the circles from $r=C(\varepsilon)\sigma_e$
		to $r=r_0$ changes each flux by $O(\sqrt E)$.
		Thus (\ref{eqn:paired-boundary-flux}) and (\ref{eqn:flux-coefficient})
		give the contribution of this edge to
		(\ref{eqn:green-flux-bound}) as
		\[
		6\pi a_e\{h(p_+)-h(p_-)\}+O(\sqrt E).
		\]
		Restoring the edge indices, summing and rescaling $h$ gives
		\begin{equation}\label{eqn:flux-balance}
			\left|\sum_{e\in E}a_e\{h(p_{e,+})-h(p_{e,-})\}\right|
			\le C\|h\|_{C^3(\Sigma)}\bigl(\rho^4\log(1/\rho)+\sqrt E\bigr).
		\end{equation}
		
		Fix a smooth norm on $T\mathcal M$. For $\|\dot s\|=1$, the corresponding
		Jacobi functions have uniformly bounded $C^3(\Sigma)$ norm by elliptic
		estimates. Combining
		(\ref{eqn:action-derivative}) with (\ref{eqn:flux-balance}) gives
		(\ref{eqn:chord-action-balance}).
		
		\smallskip
		\noindent\emph{Step 4: balance at each vertex.}
		Fix a vertex $v$ and a trace-free Hermitian matrix $H$.
		Consider the Jacobi function defined by $h=\langle HF_s,F_s\rangle$ on
		$\Sigma_v$ and $h=0$ on the other components. It satisfies
		$\|h\|_{C^3(\Sigma)}\le C\|H\|$.
		The points $q_{e,-}(s)$ and $q_{e,+}(s)$ have the same projection matrix $P_e(s)$, so
		\[
		\sum_{e\in E}a_e\{h(p_{e,+})-h(p_{e,-})\}
		=\sum_{e\text{ enters }v}a_e\operatorname{tr}(HP_e(s))
		-\sum_{e\text{ leaves }v}a_e\operatorname{tr}(HP_e(s)).
		\]
		An edge from $v$ to itself contributes to both sums with opposite signs.
		The flux balance (\ref{eqn:flux-balance}) bounds this
		expression by $C\|H\|(\rho^4\log(1/\rho)+\sqrt E)$ in absolute value.
		Since $\operatorname{tr}H=0$, we may replace $P_e(s)$ by $P_e(s)-I/3$.
		Taking the supremum over $\|H\|\le1$ controls the norm of the trace-free
		matrix in (\ref{eqn:matrix-flux}), proving the bound.
	\end{proof}
	
	\subsection{Application of the \L{}ojasiewicz gradient inequality}
	\label{sect:noncancellation}
	
	\begin{lemma}[Weighted \L{}ojasiewicz inequality]
		\label{lem:weighted-action-lojasiewicz}
		Let $\gamma_j$ be distinct oriented simple cycles of $G(\Lambda)$, and put
		$g_j(s)=\sum_{e\in \gamma_j}\xi_e(\gamma_j)\beta_e(s)$.
		There are $C>0$ and $0<\theta<1$ such that,
		for sufficiently small $s$ and real coefficients $|w_j|\le1$,
		\[
		\left|\sum_jw_jg_j(s)\right|^\theta
		\le C\left(
		\left\|\sum_jw_j\,dg_j(s)\right\|
		+\left(\sum_j|w_j|\,|g_j(s)|^2\right)^{1/2}\right).
		\]
		The constants are uniform over the choices of cycles and orientations.
	\end{lemma}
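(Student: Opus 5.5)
The plan is to reduce to the classical \L{}ojasiewicz gradient inequality for one analytic function on a product parameter space. We treat the weights $w$ as extra variables, and absorb the error from varying $w$ into the second term. The $\beta_e$ are real analytic in $s$ near $0$: they are defined by the Hopf-projected transverse intersection, which depends analytically on $s$ once we choose an analytic family, and $\beta_e(0)=0$. There are only finitely many simple cycles in the finite graph $G(\Lambda)$, and finitely many orientations. So it suffices to prove the estimate for one fixed finite list $g_1,\dots,g_N$ and take the worst constants.

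\emph{Step 1: the auxiliary function.} On $\mathcal M\times[-2,2]^N$ define the analytic function
\[
F(s,w)=\sum_jw_jg_j(s).
\]
It vanishes on $\{s=0\}$. The \L{}ojasiewicz gradient inequality at a critical point needs the function to be constant on the relevant critical set. Here $F$ vanishes on all of $\{0\}\times[-2,2]^N$, so this causes no difficulty once we use the compactness of $\{0\}\times[-1,1]^N$. At each point $(0,w_0)$, \L{}ojasiewicz's inequality gives a neighbourhood $U_{w_0}$ and constants $C_{w_0}$ and $\theta_{w_0}<1$ with
\[
|F(s,w)-F(0,w_0)|^{\theta_{w_0}}\le C_{w_0}|\nabla_{(s,w)}F(s,w)|,
\]
since $F(0,w_0)=0$. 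Cover the compact set by finitely many such neighbourhoods. Let $\theta$ be the largest of the finitely many exponents, which is allowed because $|F|\le1$ near $s=0$. This gives, for $s$ small and $|w_j|\le1$,
\[
|F|^\theta\le C\bigl(|\partial_sF|+|\partial_wF|\bigr).
\]

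\emph{Step 2: identify the gradient terms.} The $s$-gradient is $\sum_jw_j\,dg_j(s)$, which is the first term on the right of the lemma. The $w$-gradient is the vector $(g_j(s))_j$. This is the source of the main difficulty, because the lemma only allows the weighted quantity $\bigl(\sum_j|w_j||g_j|^2\bigr)^{1/2}$. Components with $w_j$ near $0$ are not controlled.

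To handle this, I would first try an induction on the support of $w$. Fix a threshold $\kappa>0$ and let $J=\{j:|w_j|\ge\kappa\}$.
\begin{itemize}
\item For $j\in J$ we have $|g_j|\le\kappa^{-1/2}\bigl(\sum|w_j||g_j|^2\bigr)^{1/2}$.
\item The contribution of indices outside $J$ to $F$ is at most $N\kappa\max_j|g_j|$. It also perturbs the $s$-gradient by at most $N\kappa\,C|s|$-type terms, which is not obviously small relative to $|F|^\theta$.
\end{itemize}

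A cleaner route, which I would pursue instead, is to apply Step 1 with the weights restricted to $w_j=0$ for $j\notin J$. This works for every subset $J$, giving finitely many cases. In each case we apply the \L{}ojasiewicz inequality on the compact set $\{0\}\times\{w:\kappa\le|w_j|\le1\ (j\in J),\ w_j=0\ (j\notin J)\}$. On that set the $w$-gradient is bounded by $\kappa^{-1/2}$ times the weighted term, and there is no perturbation issue.

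This leaves a continuity problem: general weights have small but nonzero $w_j$, and there is no uniform $\kappa$. I would resolve it by replacing $|w_j|$ with a reparametrization $w_j=\pm v_j^2$. Then the $v$-derivative of $F$ is $2v_jg_j$, whose square is $4|w_j||g_j|^2$. This is exactly the weighted term in the lemma.

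So the final step is to apply the compactness argument of Step 1 to
\[
\tilde F(s,v)=\sum_j\pm v_j^2g_j(s)
\]
on $\{0\}\times[-1,1]^N$, one sign pattern at a time. This gives
\[
|\tilde F|^\theta\le C\Bigl(\bigl\|\textstyle\sum w_jdg_j\bigr\|+2\bigl(\textstyle\sum|w_j||g_j|^2\bigr)^{1/2}\Bigr),
\]
which is the claim. The constants are uniform because there are finitely many cycles, orientations and sign patterns. The only genuine point to check carefully is that $\tilde F$ vanishes identically on $\{s=0\}$ and is analytic, which holds. I expect this reparametrization step to be the main obstacle.
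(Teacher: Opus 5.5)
Your final argument is correct and is essentially the paper's proof. You substitute $w_j=\pm v_j^2$, apply the \L{}ojasiewicz gradient inequality to $\sum_j\pm v_j^2g_j(s)$ by compactness of $\{0\}\times[-1,1]^N$, where this function vanishes identically, and take the worst constants over finitely many cycles, orientations and sign patterns; the earlier threshold/support discussion is unnecessary and can be dropped.
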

	
	\begin{proof}
		By Jacobi integrability, we may take $\Lambda_s$ to depend real
		analytically on $s$.
		Since the two local sheets intersect transversely after Hopf projection,
		the short Reeb chords and their actions depend analytically on $s$. Thus each $g_j$ is analytic
		and $g_j(0)=0$.
		
		For fixed signs $\epsilon_j\in\{-1,1\}$, set
		$f(s,z)=\sum_j\epsilon_j z_j^2g_j(s)$, with $s,z$ independent.
		Since $f(0,z)=0$, the \L{}ojasiewicz gradient inequality and compactness give
		\[
		|f(s,z)|^\theta\le C\bigl(\|d_sf(s,z)\|+|d_zf(s,z)|\bigr)
		\]
		uniformly for small $s$ and $|z_j|\le1$.
		Writing $w_j=\epsilon_j z_j^2$ gives the stated inequality.
		There are only finitely many choices of cycles, orientations and signs,
		so $C$ and $\theta$ can be chosen uniformly.
	\end{proof}
	
	\begin{proposition}[Gluing obstruction]\label{prop:assembly}
		Assume that $\Lambda$ satisfies the intersection condition in
		Definition~\ref{def:graph-condition}, and let $\theta$ be as in
		Lemma~\ref{lem:weighted-action-lojasiewicz}. Suppose
		\[
		\max\{20/3,4+4\theta\}<q_0<8.
		\]
		Then there are $\delta,\rho_0>0$ such that, if
		(\ref{eqn:initial-closeness}) holds and the stopping radius
		$\rho$ supplied by Proposition~\ref{prop:stopping} satisfies
		$0<\rho<\rho_0$, then
		\begin{equation}\label{eqn:gluing-obstruction}
			E\ge\rho^{q_0}.
		\end{equation}
	\end{proposition}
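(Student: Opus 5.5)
We argue by contradiction: suppose $E<\rho^{q_0}$ with $\rho$ small. Since $q_0>20/3$ we may fix $p$ with $1<p<2$ close enough to $2$ that $q_0\ge5p$. Then $E\le\rho_*^5$, so Propositions~\ref{prop:exterior-scalar}, \ref{prop:direct-period} and \ref{prop:chord-action-variation} and Lemma~\ref{lem:common-slice} all apply. With this choice, every error term $\sqrt E$ and $\rho_*^{-1/2}\sqrt E$ is at most $\rho^{q_0/2-p/2}$. This is $o(\rho^2)$, and in fact $o(\rho^{2+2\theta})$ once $p$ is close to $2$, using $q_0>4+4\theta$. The graph $G'$ has at least one edge with $|a_e|\asymp\rho^2$, by Proposition~\ref{prop:stopping}.

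\textbf{Step 1: cycles in $G'$.} Edges $e$ of $G'$ carry $a_e$ with $|a_e|\asymp\sigma_e^2\gg C(\varepsilon)\rho_*^2$. Edges outside $G'$ have $|a_e|\le C(\varepsilon)\rho_*^2=o(\rho^2)$. The vertex balance (\ref{eqn:matrix-flux}) then says that the $a_e$ on $G'$ form an approximate flow with matrix weights, up to $o(\rho^2)$. We take the edge $e_0$ with the largest $|a_e|$. If $e_0$ were a bridge of $G'$, cut it and sum the balance over one side. The flux through the cut is $a_{e_0}(P_{e_0}-I/3)$, which has norm $\asymp\rho^2$, while all other contributions are $o(\rho^2)$. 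This is a contradiction, so $e_0$ lies on a simple cycle $\gamma_0$ of $G'$.

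By the intersection condition, each cycle of the cactus is edge-disjoint from the others. We then want the analogous statement along $\gamma_0$: propagating balance around $\gamma_0$, at each cut vertex the pendant subcacti carry net flux $o(\rho^2)$ by the same argument, applied inductively from the leaves. Hence the $a_e$ on $\gamma_0$ are, up to $o(\rho^2)$, consistent with a single circulation of size $\asymp\rho^2$, traversed with signs $\xi_e$. I expect this to need the matrices $P_e-I/3$ at distinct double points to be linearly independent enough; the rank-one structure of $P_e$ should give this for the tree-like cut arguments.

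\textbf{Step 2: combining loop periods and the \L{}ojasiewicz inequality.} For each cycle $\gamma_j$ of $G'$, Proposition~\ref{prop:direct-period} gives
\[
g_j(s)=-2\sum_{e}\xi_e(\gamma_j)a_e\log|a_e|+O(\varepsilon\rho^2\log(1/\rho)).
\]
The logarithmic sum for $\gamma_0$ is $\gtrsim\rho^2\log(1/\rho)$, because all terms have the same sign with $|\log|a_e||\ge\log(1/\rho)$, and there is no cancellation, by Step 1. Choose weights $w_j$ proportional to the circulation on each cycle, normalized so that $\max|w_j|=1$. The per-edge weights $\sum_j w_j\xi_e(\gamma_j)$ then approximate $a_e/\max|a|$, since the cycles are edge-disjoint. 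This gives $|\sum_j w_jg_j|\gtrsim\rho^2\log(1/\rho)$.

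On the other hand, $\sum_jw_j\,dg_j=\rho^{-2}\sum_ea_e\,d\beta_e$ plus error. The errors come from tree edges, which have negligible $a_e$, and from the circulation defect of size $o(\rho^2)$ times $\|d\beta_e\|\le C$; this last bound relies on using (\ref{eqn:small-edge-action}) to keep $s$ near the locus of small chords. Hence (\ref{eqn:chord-action-balance}) gives a bound of $C\rho^{-2}(\rho^4\log(1/\rho)+\sqrt E)$ plus the defect term.

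The term $(\sum_j|w_j||g_j|^2)^{1/2}$ is bounded by $C\rho^2\log(1/\rho)$ by the same period formula, which is too large. To handle it, I would instead apply Lemma~\ref{lem:weighted-action-lojasiewicz} with the $w_j$ scaled down by a factor $\rho^{\kappa}$, or, more naturally, note that $\int_\gamma\lambda$ is tiny while $g_j$ is large. Substituting into the inequality gives $(\rho^2\log(1/\rho))^\theta\lesssim\rho^2\log(1/\rho)+\rho^{-2}\sqrt E$, so $\sqrt E\gtrsim\rho^{2+2\theta}$, that is, $E\gtrsim\rho^{4+4\theta}>\rho^{q_0}$. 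This contradicts the assumption $E<\rho^{q_0}$.

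\textbf{Main obstacle.} The hard step is controlling the defect of Step 1 precisely enough that $\|\sum_j w_j\,dg_j\|$ is $O(\rho^{2}\log(1/\rho))$ rather than $o(1)$. The difficulty is that the defect enters multiplied by $\rho^{-2}$. I would first try to show the circulation defect is $O(\rho^4\log(1/\rho)+\sqrt E)$ exactly, not merely $o(\rho^2)$. This means treating edges outside $G'$ via Claim 2, which gives $|a_e|\le C\sqrt E$ when their chords separate, and absorbing the remaining $\rho_*^2$ contributions using $p$ close to $2$.
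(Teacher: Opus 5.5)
There is a genuine gap, and it is the one you flag as the main obstacle. You work on $G'$ and discard the edges outside $E'$, so your vertex balance and circulation defect are only $O(C(\varepsilon)\rho_*^2)=O(\rho^{2p})$. After normalising by $\max|a_e|\asymp\rho^2$, this puts a term of size $\rho^{2p-2}$ into $\bigl\|\sum_jw_j\,dg_j\bigr\|$. To contradict the \L{}ojasiewicz inequality you need this to be $o\bigl((\rho^2\log(1/\rho))^\theta\bigr)$, that is, $p>1+\theta$.

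Your proposed remedy, taking $p$ close to $2$, is not available. The condition $E\le\rho_*^5$ forces $5p\le q_0<8$, so $p<8/5$, and the argument then fails whenever $\theta\ge3/5$. Claim~2 does not rescue this either: it only kills edges whose chords separate. Non-separated edges outside $E'$ can still carry $|a_e|\sim\rho_*^2$.

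The missing idea is never to drop those edges. Run the cut argument on the full graph $G(\Lambda)$, using (\ref{eqn:matrix-flux}) with all edges present:
\begin{itemize}
\item Take a spanning forest. Each omitted edge $e_j$ closes a cycle $\gamma_j$. By the intersection condition, removing any other edge $e\in\gamma_j$ from the forest leaves a vertex set $S$ joined to its complement only by $e$ and $e_j$.
\item Summing the balance over $S$ gives $a_e(P_e-I/3)-a_{e_j}(P_{e_j}-I/3)=O(\Delta)$, with $\Delta=\rho^4\log(1/\rho)+\sqrt E$.
\item The middle eigenvalue of $a(P-I/3)$ is $-a/3$ for either sign of $a$. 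Eigenvalue continuity therefore gives $|a_e-a_{e_j}|\le C\Delta$ directly, and no linear independence of the $P_e$ is needed. Bridges get $|a_e|\le C\Delta$.
\end{itemize}

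With $g_j$ summed over all edges of each cycle, (\ref{eqn:chord-action-balance}) then gives $\bigl\|\sum_jt_j\,dg_j\bigr\|\le C\Delta$. The only error in the gradient term is $\rho^{-2}\Delta\le\rho^2\log(1/\rho)+\rho^{q_0/2-2}$, and this is where $q_0>4+4\theta$ enters.

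The price is that some cycles now contain non-extracted edges, and Proposition~\ref{prop:direct-period} does not apply to them. These cycles must be handled separately, which your proposal never does. On such a cycle every $|a_e|$ is within $C\Delta$ of $|t_j|$, so $|t_j|\le C(\varepsilon)\rho_*^2$. Since $|a_e|>C\sqrt E$ on the cycles not discarded, (\ref{eqn:small-edge-action}) gives $|\beta_e(s)|\le C(\varepsilon)\rho_*$, hence $|g_j|\le C(\varepsilon)\rho^p$. Their contribution to $\rho^{-2}\sum_jt_jg_j$ is then $O(\rho^{3p-2})=o(\rho^2)$ provided $p>4/3$. Their contribution to the $|g_j|^2$ term is $O(\rho^{2p-1})$, which is negligible if $p>\theta+1/2$. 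This is where $q_0>20/3$ enters, through the choice $\max\{4/3,\theta+1/2\}<p<q_0/5$.

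Two smaller points. First, the term $\bigl(\sum_j|w_j|\,|g_j|^2\bigr)^{1/2}\le C\rho^2\log(1/\rho)$ is not too large: since $\theta<1$ it is $o\bigl((\rho^2\log(1/\rho))^\theta\bigr)$. No rescaling of the $w_j$ is needed, and your final inequality in fact retains this term correctly. Second, your claim that $\rho_*^{-1/2}\sqrt E=o(\rho^{2+2\theta})$ improves as $p\to2$ is backwards, but it is also unnecessary. That term only enters the period formula, where $o(\rho^2\log(1/\rho))$ suffices.
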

	
	\begin{proof}
		We will choose $1<p<q_0/5$ at the end of the proof.
		If $E\ge\rho^{q_0}$, there is nothing to prove. We therefore assume
		$E<\rho^{q_0}<\rho^{5p}=\rho_*^5$, so the preceding neck construction
		applies. Write $\Delta=\rho^4\log(1/\rho)+\sqrt E$;
		then $\Delta=o(\rho^{2p})$ as $\rho\to0$.
		
		\smallskip
		\noindent\emph{Step 1: reduce the vertex balance to cycle coefficients.}
		Choose a spanning forest of $G(\Lambda)$. Adding each omitted edge produces a simple
		cycle; no two of these cycles share an edge by the intersection condition
		in Definition~\ref{def:graph-condition}.
		Denote them by $\gamma_j$ and orient their edges cyclically, so
		$\xi_e(\gamma_j)=1$ for $e\in \gamma_j$.
		
		\emph{Claim.} Let $e_j$ be the edge of $\gamma_j$ outside the spanning forest.
		Then
		\[
		|a_e-a_{e_j}|\le C\Delta\quad(e\in \gamma_j),\qquad
		|a_e|\le C\Delta\quad(e\text{ a bridge}).
		\]
		To prove the first estimate, remove an edge $e\in \gamma_j\setminus\{e_j\}$
		from the forest. This splits its tree into two components; let $S$
		be the vertex set of the component into which $e$ points.
		Only $e$ and $e_j$ join $S$ to its complement in $G(\Lambda)$, because
		any further edge, together with the unique path in the original forest
		between its endpoints, would produce a second simple cycle containing
		$e$, contrary to the intersection condition in
		Definition~\ref{def:graph-condition}.
		
		Sum the vertex balances (\ref{eqn:matrix-flux}) over $S$.
		Every edge with both endpoints in $S$ contributes its matrix once with
		each sign, so all such terms cancel exactly. Since $\gamma_j$ is oriented
		cyclically, $e$ enters $S$ and $e_j$ leaves it. Consequently,
		\[
		\begin{aligned}
			&a_e(P_e-I/3)-a_{e_j}(P_{e_j}-I/3)\\
			&\quad=\sum_{v\in S}\left(
			\sum_{f\text{ enters }v}a_f(P_f-I/3)
			-\sum_{f\text{ leaves }v}a_f(P_f-I/3)\right)
			=O(\Delta).
		\end{aligned}
		\]
		The number of vertices is fixed, so the sum of the errors is still
		$O(\Delta)$.
		Since the middle eigenvalue of $a(P-I/3)$ is $-a/3$ for either sign
		of $a$, eigenvalue continuity gives the first estimate.
		Removing a bridge from $G(\Lambda)$ separates its endpoints.
		Summing over the component containing one endpoint leaves only that
		edge in the boundary sum, giving the estimate.
		
		Set $t_j=a_{e_j}$, so $a_e=t_j+O(\Delta)$ on $\gamma_j$ and
		$|t_j|\le C\rho^2$.
		Discard the bridges and cycles with $|t_j|\le C\Delta$, for a sufficiently
		large fixed $C$. Their contribution to (\ref{eqn:chord-action-balance}) is
		$O(\Delta)$. All sums below are over the remaining cycles, with $g_j$
		as in Lemma~\ref{lem:weighted-action-lojasiewicz}.
		Replacing $a_e$ by $t_j$ in (\ref{eqn:chord-action-balance}) gives
		\begin{equation}\label{eqn:cycle-derivative}
			\left\|\sum_jt_j\,dg_j(s)\right\|\le C\Delta.
		\end{equation}
		Each $g_j$ includes all edges of its cycle.
		
		\smallskip
		\noindent\emph{Step 2: estimate the weighted sums of chord actions.}
		On each remaining cycle, $|a_e|\asymp|t_j|$ for every $e\in \gamma_j$.
		We distinguish two cases.
		
		If $\gamma_j\subset E'$, the loop period formula in
		Proposition~\ref{prop:direct-period} gives
		\begin{equation}\label{eqn:cycle-period}
			g_j(s)=2\#\gamma_j\,t_j\log(1/|t_j|)
			+O\!\left(\varepsilon\rho^2\log(1/\rho)\right).
		\end{equation}
		Here replacing $a_e$ by $t_j$ costs $O(\Delta\log(1/\rho))$,
		and the loop period is bounded by $C(\varepsilon)\rho_*^{-1/2}\sqrt E$.
		Since $E<\rho^{5p}$ and $p>1$, both are $o(\rho^2\log(1/\rho))$
		and are absorbed by taking $\rho$ sufficiently small in terms of
		$\varepsilon$.
		
		If $\gamma_j\not\subset E'$, an edge outside $E'$ has coefficient
		$O(C(\varepsilon)\rho_*^2)$, so $|t_j|$ and all $|a_e|$ on this cycle are
		$O(C(\varepsilon)\rho_*^2)$. Hence $|\beta_e(s)|\le C(\varepsilon)\rho_*$ for every edge:
		for $e\in E'$ this follows from $|\beta_e(s)|\le C\sqrt{|a_e|}$;
		for $e\notin E'$, it follows from (\ref{eqn:small-edge-action}),
		since $|a_e|>C\sqrt E$ on every remaining cycle.
		Thus
		\begin{equation}\label{eqn:mixed-cycle-bound}
			|t_j|\le C(\varepsilon)\rho^{2p},\qquad |g_j(s)|\le C(\varepsilon)\rho^p
			\qquad(\gamma_j\not\subset E').
		\end{equation}
		Finally, since $\max_{e\in E'}|a_e|\asymp\rho^2$ and $\Delta=o(\rho^2)$,
		at least one remaining cycle has $|t_j|\asymp\rho^2$.
		By (\ref{eqn:mixed-cycle-bound}) and $p>1$, this cycle is contained in $E'$.
		
		The cycles not contained in $E'$ contribute $O(C(\varepsilon)\rho^{3p-2})$ to
		$\rho^{-2}\sum_jt_jg_j(s)$ by
		(\ref{eqn:mixed-cycle-bound}). For the other cycles,
		(\ref{eqn:cycle-period}) gives
		\[
		\rho^{-2}\sum_jt_jg_j(s)
		=2\rho^{-2}\sum_{\gamma_j\subset E'}\#\gamma_j\,t_j^2\log(1/|t_j|)
		+O\!\left(\varepsilon\rho^2\log(1/\rho)+C(\varepsilon)\rho^{3p-2}\right).
		\]
		For small $\rho$, each summand $\#\gamma_j\,t_j^2\log(1/|t_j|)$ on the right
		is positive, since $0<|t_j|\le C\rho^2<1$.
		The cycle with $|t_j|\asymp\rho^2$
		contributes at least $C^{-1}\rho^2\log(1/\rho)$ in absolute value.
		If $p>4/3$, then $C(\varepsilon)\rho^{3p-2}=o(\rho^2)$
		for each fixed $\varepsilon$. Choosing $\varepsilon$ and then $\rho$
		small therefore gives
		\[
		\left|\rho^{-2}\sum_jt_jg_j(s)\right|
		\ge C^{-1}\rho^2\log(1/\rho).
		\]
		
		The same estimates also give
		\[
		\rho^{-1}\left(\sum_j|t_j|\,|g_j(s)|^2\right)^{1/2}
		\le C\rho^2\log(1/\rho)+C(\varepsilon)\rho^{2p-1}.
		\]
		
		\smallskip
		\noindent\emph{Step 3: apply the weighted \L{}ojasiewicz inequality.}
		Apply Lemma~\ref{lem:weighted-action-lojasiewicz} with
		$w_j=t_j/\max_k|t_k|$. Since $\max_k|t_k|\asymp\rho^2$, it gives
		\[
		\begin{aligned}
			(\rho^2\log(1/\rho))^\theta
			&\le C\left(\rho^{-2}\Delta+\rho^2\log(1/\rho)\right)
			+C(\varepsilon)\rho^{2p-1}\\
			&\le C\left(\rho^2\log(1/\rho)+\rho^{q_0/2-2}\right)
			+C(\varepsilon)\rho^{2p-1},
		\end{aligned}
		\]
		by (\ref{eqn:cycle-derivative}), Step 2 and $E<\rho^{q_0}$.
		We now choose
		\[
		\max\{4/3,\theta+1/2\}<p<q_0/5.
		\]
		The assumed range of $q_0$ and $\theta<1$ make this interval nonempty.
		With $p$ fixed, choose $\varepsilon$ sufficiently small as in Step 2.
		Then $C(\varepsilon)$ is fixed, and $2p-1>2\theta$ and
		$q_0/2-2>2\theta$ make the right-hand side
		$o((\rho^2\log(1/\rho))^\theta)$, a contradiction for small $\rho$.
		All smallness requirements are met by choosing $\delta,\rho_0$ sufficiently small.
		Hence $E\ge\rho^{q_0}$, proving
		(\ref{eqn:gluing-obstruction}).
	\end{proof}
	
	\begin{example}[The two-plane case]\label{ex:antipodal-cancellation}
		The two Reeb chords are antipodal images. Since $x\mapsto-x$ preserves
		$\lambda$, their actions agree
		when oriented from the first sphere to the second. The cycle traverses
		them in opposite directions, so $g(s)=0$ for every $s$. The flux and
		period estimates then give the obstruction directly.
	\end{example}

	\subsection{The discrete \L{}ojasiewicz estimate}
	
	\begin{proposition}[Discrete \L{}ojasiewicz estimate]
		\label{prop:discrete-lojasiewicz}
		Assume that $\Lambda$ satisfies the intersection condition in
		Definition~\ref{def:graph-condition}.
		There are $q\in(1,2)$ and $\delta,C>0$ such that, whenever
		$L_{0,R}=(\lambda_{0,R})_\#L$ satisfies (\ref{eqn:initial-closeness}),
		\[
		D(R)^{2q}\le CE(R).
		\]
		If $\mathbf C_0$ is a tangent cone of $L$ at the origin, then also
		\begin{equation}\label{eqn:discrete-lojasiewicz}
			m(R)^q\le C\{m(4R)-m(R/4)\}.
		\end{equation}
	\end{proposition}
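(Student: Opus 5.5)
By scaling we may assume $R=1$ and work with $L$ itself, writing $D=D(1)$ and $E=E(1)$. The proof combines Proposition~\ref{prop:stopping}, Proposition~\ref{prop:assembly} and Proposition~\ref{prop:density}. Fix $3<\gamma<4$ and $q_0$ in the range of Proposition~\ref{prop:assembly}, and set $q=q_0/\gamma$. We need $1<q<2$. Since $q_0>20/3>4>\gamma$, we get $q>1$. For $q<2$ we need $q_0<2\gamma$. Since $q_0<8$ is free within its range and $\gamma$ may be taken close to $4$, we first choose $q_0$ just above $\max\{20/3,4+4\theta\}$ and then choose $\gamma\in(q_0/2,4)$. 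This interval is nonempty because $q_0<8$.

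Next we divide into cases. If $D=0$ there is nothing to prove. If $E>\varepsilon_1 D^2$, where $\varepsilon_1$ is from Proposition~\ref{prop:stopping}, then $E\ge \varepsilon_1 D^2\ge \varepsilon_1 D^{2q}$, since $D\le C\delta<1$ and $q>1$. In the remaining case $E\le\varepsilon_1D^2$, Proposition~\ref{prop:stopping} gives a height stopping radius $\rho$ with $\rho^\gamma\asymp Q\ge C^{-1}D^2$. If $\rho\ge\rho_0$, we have a fixed lower bound for $\rho$. To handle it, we apply Proposition~\ref{prop:assembly} with $\rho_0$ replaced by any fixed constant. The cleaner route is to note that $\rho\le C\rho^\gamma$ is impossible for small $\delta$, because $Q\le C\sup r_j^\gamma\|V\|^2_{r_j}$ and the initial graphs are $\delta$-small. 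So for $\delta$ small, Lemma~\ref{lem:nonconcentration} forces $\rho<r_0$, and $r_0$ may be taken below $\rho_0$. Then Proposition~\ref{prop:assembly} gives $E\ge\rho^{q_0}\ge C^{-1}Q^{q_0/\gamma}\ge C^{-1}D^{2q}$. This proves the first inequality; the constants are uniform because the same hypotheses hold for every rescaling $L_{0,R}$.

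For (\ref{eqn:discrete-lojasiewicz}), let $\mathbf C_0$ be a tangent cone. At scales $R$ where $L_{0,R}$ satisfies (\ref{eqn:initial-closeness}), Proposition~\ref{prop:density}, applied after rescaling, gives $m(R)\le C(D(R)^2+E(R))$. Therefore
\[
m(R)^q\le C\bigl(D(R)^{2q}+E(R)^q\bigr)\le C\bigl(E(R)+E(R)^q\bigr)\le CE(R),
\]
using $E(R)\le m(4R)\le 1$ and $q>1$. Since $E(R)=m(4R)-m(R/4)$, this is (\ref{eqn:discrete-lojasiewicz}).

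The main obstacle is making the exponents compatible: $q_0/\gamma$ must lie in $(1,2)$ while $q_0$ satisfies the constraints of Proposition~\ref{prop:assembly} and $\gamma<4$ satisfies those of Proposition~\ref{prop:perturbative-transfer}. This forces $4+4\theta<8$, which holds because $\theta<1$, and it forces $\gamma>q_0/2>\max\{10/3,2+2\theta\}$. A second, milder point is to ensure that the stopping radius is below $\rho_0$. I would secure this by shrinking $\delta$, so that the initial graphs have $r_0^\gamma\|V\|_{r_0}^2$ small compared with $Q\gtrsim D^2$, as in Step 2 of Proposition~\ref{prop:stopping}.
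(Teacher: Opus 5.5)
Your argument follows the paper's proof. You use the same exponent choice $q=q_0/\gamma$ with $q_0/2<\gamma<4$ and the same reduction to $E\le\varepsilon_1D^2$ (trivial otherwise since $D<\delta<1$). You then chain Proposition~\ref{prop:stopping} into Proposition~\ref{prop:assembly} to get $E\ge\rho^{q_0}\ge C^{-1}D^{2q}$, and use Proposition~\ref{prop:density} with $E\le m(4)\le1$ for (\ref{eqn:discrete-lojasiewicz}). The one step that fails as written is showing that $\rho$ lies below the threshold $\rho_0$ of Proposition~\ref{prop:assembly}.

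\begin{itemize}
\item You cannot replace $\rho_0$ by an arbitrary fixed constant. The proof of that proposition genuinely needs $\rho$ small, for instance to absorb $C(\varepsilon)\rho^2$ and $C(\varepsilon)\rho^{3p-2}$ against $\varepsilon\rho^2\log(1/\rho)$ and $\rho^2$.
\item You also cannot take $r_0$ below $\rho_0$. The radius $r_0$ is fixed in the stopping construction and in Step~2 of Proposition~\ref{prop:exterior-scalar} before $\rho_0$ is chosen. The constants that determine $\rho_0$ (the $C^2(\Sigma_{r_0})$ bounds, the $\log(r_0/r)$ terms) depend on $r_0$, so demanding $r_0<\rho_0$ is circular.
\item The bound $\rho<r_0$ that you obtain from Lemma~\ref{lem:nonconcentration} is already part of Proposition~\ref{prop:stopping}. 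It does not give $\rho<\rho_0$.
\end{itemize}

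The repair keeps $r_0,\varepsilon_0,\rho_0$ fixed and shrinks only $\delta$. Since $\rho$ is a height stopping radius, the domain of $D_{R_0}(\rho)$ lies in $B_4\setminus\overline B_{1/4}$, with the two sheets counted at most twice. Hence
\[
\varepsilon_0^2\rho^5\le\rho^5D_{R_0}(\rho)^2\le2D^2<2\delta^2,
\]
so $\rho<\rho_0$ once $\delta$ is small in terms of $\varepsilon_0$ and $\rho_0$. This is the content of the paper's remark that Allard regularity at fixed scales puts $\rho$ below the threshold. Alternatively, your Step~2 idea also works if you apply Lemma~\ref{lem:nonconcentration} with $\tau$ small compared with $\rho_0$ rather than at scale $r_0$. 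That gives $r^\gamma\|V\|_r^2<Q/2$ for all $r\ge\rho_0$, so the selected radius satisfies $\rho<\rho_0$. With either repair the proof is complete.
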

	
	\begin{proof}
		By rescaling, it suffices to take $R=1$.
		Fix $q_0$ as in Proposition~\ref{prop:assembly}, and choose
		\[
		q_0/2<\gamma<4,\qquad q=q_0/\gamma\in(1,2).
		\]
		We may assume $D>0$ and $E\le\varepsilon_1D^2$, with $\varepsilon_1$
		as in Proposition~\ref{prop:stopping}; otherwise the estimate is immediate
		for $D\le1$.
		Proposition~\ref{prop:stopping} gives $\rho\ge C^{-1}D^{2/\gamma}$.
		Allard regularity at fixed scales ensures that $\rho$ is below the
		threshold in Proposition~\ref{prop:assembly} once $\delta$ is sufficiently small.
		That proposition gives
		\[
		E\ge\rho^{q_0}\ge C^{-1}D^{2q}.
		\]
		
		When $\mathbf C_0$ is a tangent cone, Proposition~\ref{prop:density}
		and the uniform bound for $E$ give
		\[
		m(1)^q\le C(D^{2q}+E^q)\le CE
		=C\{m(4)-m(1/4)\}.
		\]
		Rescaling proves (\ref{eqn:discrete-lojasiewicz}).
	\end{proof}
	
	\begin{remark*}
		The strict inequality $q<2$ is essential for the radial summability
		in Lemma~\ref{lem:numerical}. Since $q=q_0/\gamma$ and $\gamma<4$,
		we need $q_0<2\gamma<8$. If only $E\ge\rho^8$ were available,
		the same argument would give $m(1)^{8/\gamma}\le CE$, with exponent
		greater than $2$. The bound $O(\rho^4\log(1/\rho))$ on the flux error
		suffices for the argument in Step~3 of Proposition~\ref{prop:assembly}
		with any \L{}ojasiewicz exponent $\theta<1$. A weaker bound $O(\rho^3)$ need not suffice when $\theta>1/2$.
	\end{remark*}
	
	\begin{proof}[Proof of Theorem~\ref{thm:rigidity}]
		Let $\Lambda'$ be a special Legendrian integral two-cycle sufficiently
		close to $\Lambda$ in the varifold topology. Varifold convergence and
		monotonicity give (\ref{eqn:initial-closeness}) for $\mathbf C(\Lambda')$.
		Since $E=0$, Proposition~\ref{prop:discrete-lojasiewicz} gives $D=0$,
		so the cone is supported on $\mathbf C_s$.
		The initial graphical description implies multiplicity one on a regular
		patch of each component, and constancy gives
		$\mathbf C(\Lambda')=\mathbf C_s$ as currents. Taking radial slices
		gives $\Lambda'=\Lambda_s$.
	\end{proof}
	
	\section{Uniqueness of the tangent cone}\label{sect:uniqueness-tangent-cone}
	
	We use Proposition~\ref{prop:discrete-lojasiewicz} to prove summability
	of the radial variation, then show that all sufficiently small rescalings
	satisfy (\ref{eqn:initial-closeness}). Compare \cite[Section~1]{CM} and
	\cite[Theorem~6.7]{Sze20}.
	
	\subsection{Summability of radial variation}
	
	Let $R_j=4^{-j}R_0$ and $\alpha=1-q/2>0$. All statements initially
	concern a finite range of scales on which (\ref{eqn:discrete-lojasiewicz})
	is known; no continuation below the last scale is assumed.
	
	\begin{lemma}\label{lem:numerical}
		If $m(4R_0)\le1$ and (\ref{eqn:discrete-lojasiewicz}) holds at $R_j$ for
		$0\le j\le n$, then
		\begin{equation}\label{eqn:radial-summability}
			\sum_{j=0}^n\sqrt{m(4R_j)-m(R_j/4)}
			\le C m(4R_0)^\alpha.
		\end{equation}
		The estimate also holds when the sum starts at any later scale $R_j$.
	\end{lemma}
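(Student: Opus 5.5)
We would run the standard Łojasiewicz-type numerical iteration on the dyadic (factor $4$) scales. Put $m_j=m(R_j)$, so $m(4R_j)=m_{j-1}$ and $m(R_j/4)=m_{j+1}$, with $m_{-1}=m(4R_0)$. By monotonicity $m$ is nondecreasing in $R$ and nonnegative (since $\mathbf C_0$ is a tangent cone), so $m_{-1}\ge m_0\ge m_1\ge\dots$. Write $E_j=m_{j-1}-m_{j+1}\ge0$. The hypothesis (\ref{eqn:discrete-lojasiewicz}) at $R_j$ reads $m_j^q\le CE_j$. Since each difference $m_{k-1}-m_k$ appears in at most two of the $E_j$, it suffices to bound $\sum_j\sqrt{m_{j-1}-m_j}$, or to work directly with $E_j$ and split into even and odd indices.

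The key step is the pointwise inequality
\[
\sqrt{E_j}\le C\bigl(m_{j-1}^\alpha-m_{j+1}^\alpha\bigr),
\]
which telescopes, summed separately over even and odd $j$, to $C(m_{-1}^\alpha+m_0^\alpha)\le 2Cm(4R_0)^\alpha$. To prove it we use concavity of $t\mapsto t^\alpha$ on $[m_{j+1},m_{j-1}]$:
\[
m_{j-1}^\alpha-m_{j+1}^\alpha\ge\alpha m_{j-1}^{\alpha-1}E_j .
\]
So it suffices to show $E_j^{1/2}\le C m_{j-1}^{\alpha-1}E_j$, that is, $m_{j-1}^{1-\alpha}\le CE_j^{1/2}$, i.e.\ $m_{j-1}^{q}\le C E_j$ (recall $2(1-\alpha)=q$). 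The hypothesis only controls $m_j^q$, not $m_{j-1}^q$, so we split into two cases.

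\emph{Case A:} $m_j\ge\frac12 m_{j-1}$. Then $m_{j-1}^q\le 2^q m_j^q\le CE_j$, and the concavity argument applies.

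\emph{Case B:} $m_j<\frac12 m_{j-1}$. Then $E_j\le m_{j-1}$ and $m_{j-1}-m_{j+1}\ge\frac12 m_{j-1}$. Using $E_j\le m_{j-1}\le 1$ (from $m(4R_0)\le1$) and $\alpha<\tfrac12$, we get
\[
\sqrt{E_j}\le m_{j-1}^{1/2}\le m_{j-1}^{\alpha}\le 2^{\alpha}\bigl(m_{j-1}^\alpha-m_{j+1}^\alpha\bigr)\cdot\frac{1}{2^\alpha-1}\,2^{-\alpha}\cdot\ldots ,
\]
more simply, $m_{j+1}\le m_j<\frac12 m_{j-1}$ gives $m_{j-1}^\alpha-m_{j+1}^\alpha\ge(1-2^{-\alpha})m_{j-1}^\alpha$. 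Hence $\sqrt{E_j}\le C m_{j-1}^\alpha\le C(m_{j-1}^\alpha-m_{j+1}^\alpha)$.

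In both cases the pointwise inequality holds with a constant depending only on $q$ and the constant in (\ref{eqn:discrete-lojasiewicz}). Summing over $0\le j\le n$ (even and odd separately) gives (\ref{eqn:radial-summability}). Starting at a later index $j_0$ gives the same bound with $m(4R_{j_0})^\alpha$, which is what "starting at any later scale" means.

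The only delicate point is the index mismatch between $m_j$ in the hypothesis and $m_{j-1}$ in the telescoping, which the case split handles. Everything else is routine. In particular, only scales $0\le j\le n$ are used, so no information below the last scale is needed.
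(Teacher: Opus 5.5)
Your proof is correct and follows the paper's argument. Both prove the pointwise bound $\sqrt{m(4R_j)-m(R_j/4)}\le C\{m(4R_j)^\alpha-m(R_j/4)^\alpha\}$, using concavity of $t\mapsto t^\alpha$ together with (\ref{eqn:discrete-lojasiewicz}) in one case and $\sqrt u\le u^\alpha\le C(u^\alpha-v^\alpha)$ in the rapid-decay case, and then sum with a two-step telescoping. The differences are minor: you split cases on the middle value $m(R_j)$ where the paper uses $m(R_j/4)$, and the garbled first display in your Case B should be deleted in favour of the ``more simply'' line that follows it.
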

	
	\begin{proof}
		Write $u=m(4R)$ and $v=m(R/4)$. If $v\le u/2$, then
		$\sqrt{u-v}\le\sqrt u\le u^\alpha\le C(u^\alpha-v^\alpha)$.
		Otherwise $m(R)\ge v>u/2$, so (\ref{eqn:discrete-lojasiewicz}) gives
		$u-v\ge C^{-1}u^q$. Concavity then gives
		$u^\alpha-v^\alpha\ge\alpha u^{-q/2}(u-v)
		\ge C^{-1}\sqrt{u-v}$. Taking a telescoping sum gives the result.
	\end{proof}
	
	\subsection{Proof of tangent-cone uniqueness}
	
	Choose $\delta>0$ small enough that
	Proposition~\ref{prop:discrete-lojasiewicz} applies and
	(\ref{eqn:initial-closeness}) ensures that a best approximating cone
	can be chosen with $s\in\operatorname{int}\mathcal M$.
	This follows from the minimizing property and the normal graphs on fixed
	regular regions, as at the beginning of Section~\ref{sect:graphical-extension-lawlor-neck-extraction}.
	
	For the argument below, measure height from the fixed cone by
	\[
	H(R)^2=R^{-5}\int_{L\cap(B_{8R}\setminus\overline B_{R/8})}
	\dist(x,\mathbf C_0)^2\,d\|L\|(x).
	\]
	By a stationarity calculation similar to that in Step 1 of
	Proposition~\ref{prop:stopping}, we obtain the Poincar\'e estimate
	\[
	|H(t)-H(R)|\le C\sqrt{m(8R)-m(t/8)},
	\qquad R/4\le t\le R.
	\]
	
	Choose $\tau>0$ so that $C\tau^4<\delta^2/2$. On the regular region
	where $\dist(x,\mathcal R_0)>\tau|x|$, choose
	$0<\varepsilon<1$ sufficiently small that the graphicality
	criterion at the beginning of
	Section~\ref{sect:graphical-extension-lawlor-neck-extraction} gives graphs of
	$C^1$ norm less than $\tau$, and require
	\[
	C\varepsilon^{2\alpha}<\delta^2/2,\qquad
	64\varepsilon<\delta.
	\]
	Finally, choose $R_0$ along the prescribed tangent sequence so that
	\[
	H(R_0)<\varepsilon,\qquad m(16R_0)<\varepsilon,
	\]
	and $L_{0,R_0}$ satisfies (\ref{eqn:initial-closeness}) and is given by
	multiplicity-one normal graphs of $C^1$ norm less than $\varepsilon$
	over the regular sheets with $1/8<|x|<8$ and
	$\dist(x,\mathcal R_0)>(\tau/2)|x|$.
	
	\emph{Inductive claim.} The conditions
	(\ref{eqn:initial-closeness}) hold for every $R\in[R_n,R_0]$.
	
	The starting conditions give the case $n=0$. Suppose the claim holds
	down to $R_n$. For any $R\in[R_{n+1},R_n]$, the Poincar\'e estimate and
	Lemma~\ref{lem:numerical} give
	\[
	H(R)\le H(R_0)+C\sum_{k=0}^n\sqrt{E(R_k)}+C\sqrt\varepsilon
	\le C\varepsilon^\alpha.
	\]
	The finitely many endpoint terms arising from the larger annuli in
	$H$ are bounded by $C\sqrt\varepsilon$, since $m(16R_0)<\varepsilon$.
	The same bound holds at every intermediate scale between $R$ and $R_0$.
	
	Using monotonicity and the inductive hypothesis, we apply Allard regularity
	to obtain
	normal graphs $V$ over the regular region with $1/4<|x|<4$ and
	$\dist(x,\mathcal R_0)>\tau|x|$, satisfying
	\[
	|V|+|\nabla V|\le C_\tau H(R)\le C_\tau\varepsilon^\alpha<\tau.
	\]
	These graphs bound the reverse height on this region by $CH(R)^2$.
	The omitted part of $\mathbf C_0$ has mass
	$O(\tau^2)$ in $B_4\setminus\overline B_{1/4}$, and each of its points
	lies within $C\tau$ of the adjacent graphs.
	Consequently,
	\[
	\begin{aligned}
		&\int_{L_{0,R}\cap(B_4\setminus\overline B_{1/4})}
		\dist(x,\mathbf C_0)^2\,d\|L_{0,R}\|(x)\\
		&\qquad+\int_{\mathbf C_0\cap(B_4\setminus\overline B_{1/4})}
		\dist(x,\operatorname{spt}L_{0,R})^2\,d\|\mathbf C_0\|(x)
		\le CH(R)^2+C\tau^4
		\le C\varepsilon^{2\alpha}+C\tau^4.
	\end{aligned}
	\]
	By the choices of $\tau$ and $\varepsilon$, this is less than $\delta^2$.
	The mass condition follows from
	\[
	\|L_{0,R}\|(B_4)-\|\mathbf C_0\|(B_4)
	=64m(4R)\le64\varepsilon<\delta.
	\]
	This proves the inductive step.
	
	Thus (\ref{eqn:initial-closeness}) holds at every sufficiently small
	scale. Proposition~\ref{prop:discrete-lojasiewicz} places every tangent
	cone in the family $\mathbf C_s$, as in the proof of
	Theorem~\ref{thm:rigidity}. Since $\delta$ may be chosen arbitrarily
	small, every tangent cone equals $\mathbf C_0$.

\end{document}